\newtheorem{lem}{Lemma}[section]
\newtheorem{thm}{Theorem}
\newtheorem{prop}[lem]{Proposition}
\newtheorem{cor}[lem]{Corollary}
\newtheorem{rem}[lem]{Remark}
\newenvironment{proof}[1][\unskip]%
 {\begin{trivlist} \item[]{\bf Proof #1. }}%
 {\hspace*{\fill}$\rule{.4\baselineskip}{.4\baselineskip}$\end{trivlist}}
\makeatletter\@addtoreset{figure}{section}\makeatother
\makeatletter \@addtoreset{equation}{section} \makeatother
\DeclareMathOperator\arctanh{arctanh}
\newcommand{\rmO}{\mathrm{O}}
\newcommand{\rmd}{\mathrm{d}}
\newcommand{\rme}{\mathrm{e}}
\newcommand{\rmi}{\mathrm{i}}
\newcommand{\Rg}{\mathrm{Rg}\,}
\renewcommand{\Im}{\mathrm{Im}\,}
\renewcommand{\leq}{\leqslant}
\renewcommand{\geq}{\geqslant}
\def\XXint#1#2#3{{\setbox0=\hbox{$#1{#2#3}{\int}$}
     \vcenter{\hbox{$#2#3$}}\kern-.5\wd0}}
\newcommand{\eps}{\varepsilon}
\newcommand{\R}{\mathbb{R}}
\newcommand{\bbP}{\mathbb{P}}
\newcommand{\bbR}{\mathbb{R}}
\newcommand{\bbN}{\mathbb{N}}
\newcommand{\bbZ}{\mathbb{Z}}
\newcommand{\mmT}{\mathcal{T}}
\newcommand{\mmL}{\mathcal{L}}
\newcommand{\mmX}{\mathcal{X}}
\title{Pacemakers in large arrays of oscillators with nonlocal coupling}
\author{Gabriela Jaramillo and Arnd Scheel
\footnote{GJ and AS acknowledge support by the National Science Foundation through grants DMS-0806614 and DMS-1311740.}
 \\
University of Minnesota \\
School of Mathematics \\
127 Vincent Hall, 206 Church St SE \\
Minneapolis, MN 55455, USA}
\begin{document}
\maketitle

\begin{abstract}
We model pacemaker effects of an algebraically localized heterogeneity in a 1 dimensional array of oscillators with nonlocal coupling. We assume the oscillators obey simple phase dynamics and that the array is large enough so that it can be approximated by a continuous nonlocal evolution equation. We concentrate on the case of heterogeneities with positive average and show that steady solutions to the nonlocal problem exist. In particular, we show that these heterogeneities act as a wave source, sending out waves in the far field. This effect is not possible in 3 dimensional systems, such as the complex Ginzburg-Landau equation, where the wavenumber of weak sources decays at infinity. To obtain our results we use a series of isomorphisms to relate the nonlocal problem to the viscous eikonal equation. We then use Fredholm properties of the Laplace operator in Kondratiev spaces to obtain solutions to the eikonal equation, and by extension to the nonlocal problem.
\end{abstract}

\section{Introduction}
 
  The collective behavior in systems of coupled oscillators has attracted a tremendous amount of interest. Self-organized synchronization in large systems appears particularly dramatic when coupling effects are seemingly weak \cite{strogatzbook}. A substantial part of the work has been  devoted to the study of such collections of oscillators in the strikingly simple and explicit Kuramoto model \cite{kuramoto}. Synchronization and desynchronization as well as a plethora of more complex states have been found, and boundaries (or phase transitions) have been characterized \cite{strogatz1, chiba}. On the other hand, it is well known that the collective behavior may well depend on the type of coupling, as well as the type of internal dynamics at nodes. Of particular interest have been spiky oscillators in neuroscience with their quite different phase response curves, or phase-amplitude descriptions near Hopf bifurcations. 

Our interest here focuses modestly on a rigorous description of pacemakers. Roughly speaking, we ask if and how a small collection of oscillators can influence the collective behavior of a large ensemble. This question has been addressed in numerous contexts. One observed dramatic influence manifests itself through the occurrence of target patterns. Phenomenologically, a faster (or slower) patch of oscillators entrains neighbors and a phase-lag gradient propagates through the medium according to an eikonal equation. 

The anlaysis of such phenomena is notoriously complicated by the absence of spectral gaps in the linearization at the synchronized state, inherently related to the presence of a neutral phase in the medium. Standard perturbation analysis in a large collection of oscillators, based on an Implicit Function Theorem, is valid only for extremely small sizes of perturbations and fails to capture key phenomena. In an infinite medium, the range of the linearization is not closed, so that simple matched asymptotics cannot be justified. In fact, in an infinite medium (and also, with some corrections, in large media), one observes that the system relaxes to a frequency-synchronized state, but the collective frequency depends in unusual ways on the perturbation parameter. Characterizing, for instance, the deviation of the localized patch of oscillators from the ensemble background by $\varepsilon$, the collective frequency will change with $\omega\sim \varepsilon^2$ for $\varepsilon>0$ and remain constant for $\varepsilon<0$, in a one-dimensional medium. It is this general phenomenon that we are concerned with in this paper. 

One can ask questions of perturbative nature in many different circumstances. First, one can consider various types of oscillators, ranging from simple phase oscillators $\phi'=\omega$, over gauge-invariant phase-amplitude oscillators, $A'=(1+\rmi\omega)A-(1+\rmi\gamma)A|A|^2$, to general asymptotically stable periodic orbits $u_*(-\omega t)$ in an ODE $u'=f(u)$. On the other hand, one can look at simple scalar diffusive coupling, or, most generally, dynamics on networks. Our focus is on \emph{simple} internal phase dynamics, but non-local coupling along a line. Previous results have studied phase-dynamics, formally derived from the complex Ginzburg-Landau equation for amplitude-phase oscillators \cite{mikh}, and general stable periodic orbits with diffusive coupling, but in a one-dimensional context \cite{ssdefect} or with radial symmetry \cite{kollarscheel}. Radial symmetry was removed as an assumption in \cite{jaramillo2} in the complex Ginzburg-Landau equation in 3 dimensions. 

Phase dynamics can be derived and shown to approximate dynamics on long temporal and spatial scales \cite{dsss}. A general form of the dynamics is 
\begin{equation}\label{e:eik}
\phi_t=d\Delta \phi -\kappa|\nabla \phi|^2 + \omega_*, \qquad \phi\in \bbR/(2\pi\bbZ).
\end{equation}
Substituting $\phi\to \phi+\omega_* t$, thus exploiting the phase invariance, we can assume that $\omega_*=0$. The solutions $\phi\equiv \bar{\phi}$ correspond to the spatially synchronized state. One can show that this synchronized state is asymptotically stable under localized ($L^1$) perturbations, with decay rate given by the ``effective viscosity''  $\sim  dt^{-n/2}$ \cite{GS}.

Adding a localized inhomogeneity,
\begin{equation}\label{e:eikinh}
\phi_t=d\Delta \phi -\kappa|\nabla \phi|^2 + \omega_*+  \varepsilon g(x), \qquad g(x)\to 0 \mbox{ for } |x|\to\infty,
\end{equation}
will destroy the synchronized state. In particular, $g>0$, compactly supported, encodes a localized patch of oscillators oscillating at a higher frequency $\omega_*+\eps g(x)$. One can then ask if the system (\ref{e:eikinh}) possesses a periodic solution for $\varepsilon\neq 0$, and what the frequency of this solution would be. Therefore, note first that (\ref{e:eikinh}) possesses nontrivial solutions at $\varepsilon=0$, 
\[
\phi(t,x)=k\cdot x-\omega t, \qquad \omega= \kappa |k|^2.
\]
This family of solutions mimics periodic wave-trains $u_*(-\omega t+kx)$ in a reaction-diffusion context \cite{ssdefect,kollarscheel} or plane-wave solutions $\sqrt{1-k^2}\rme^{\rmi(-\omega t + k\cdot x)}$ in the complex Ginzburg-Landau equation. The coefficient $\kappa$ (which could be scaled to $\kappa=1$) therefore encodes nonlinear dispersion, that is, the dependence of nonlinear (here, affine) wave frequency on the spatial wavenumber. These waves travel in the direction of $k=\nabla\phi$, with group velocity $c_\mathrm{g}=2\kappa k$. We are therefore interested in solutions $\phi(x-\omega t)$ for which $c_\mathrm{g}=2\kappa\nabla\phi(x)\cdot x\geq 0$ for $|x|\to \infty$. In other words, we focus on waves ``generated'' by the inhomogeneity, rather than the effect of the inhomogeneity as a scattering object on waves sent in from infinity. 

Equation (\ref{e:eikinh}) can be analyzed using a variety of methods. Cole-Hopf will conjugate the equation to a Schr\"odinger eigenvalue problem, where small eigenvalues can pop out of the edge of the essential spectrum depending on the sign of $\varepsilon \int g$ \cite{kollarscheel}. Studying eigenvalues of Schr\"odinger operators opens up an entirely different set of tools; see \cite{simon}. An approach that carries over to more general one-dimensional systems relies on rewriting (\ref{e:eikinh}) as an ODE,
\begin{align}
\phi_x&=u\\
u_x&=\frac{\kappa}{d}u^2-\omega-\varepsilon g(x).
\end{align}
When $g$ is exponentially localized, one can compactify space $x=\arctanh \tau$, and study heteroclinic orbits in 
\begin{align}
u_x&=\frac{\kappa}{d}u^2-\omega-\varepsilon g(x(\tau))\\
\tau_x&=1-\tau^2.
\end{align}
We refer to \cite{ssdefect,kollarscheel} for discussions of the corresponding heteroclinic bifurcation. Radial symmetry allows for a similar approach based on dynamical systems methods. 

Our main result is concerned with the nonlocal equivalent of (\ref{e:eikinh}), 
\begin{equation}\label{e:eiknl}
\phi_t=-\phi+G\ast\phi - (J'\ast\phi)^2+\varepsilon g(x).
\end{equation}
Here, $G$ and $J$ are symmetric convolution kernels, $x\in \bbR$, and $\int G=1$. We are motivated by two aspects. First, nonlocal coupling is more realistic in most examples of coupled oscillator problems; nonlocal kernels arise naturally in the limit of large networks \cite{medvedev}. Second, nonlocal problems are not immediately amenable to the type of dynamical systems approach described above and therefore pose interesting technical challenges. Indeed, the linearization at $\phi\equiv \bar{\phi}$ is not Fredholm as a closed operator on typical spaces and a more subtle analysis is necessary. 

Linear problems similar to (\ref{e:eiknl}) have been studied in \cite{fayescheelindiana} using spaces of exponentially localized functions. As demonstrated in \cite{jaramillo2,jaramillo1}, this approach is not easily viable in higher space dimensions. A more robust approach relies on algebraic weights and will be pursued here. As a side benefit, we are also able to allow algebraically localized inhomogeneities $g$. Such weak algebraic localization would cause problems even in the local version, since time compactifications would leave equilibria at infinity highly degenerate, necessitating for instance refined geometric blow-up methods. 

We start in Section \ref{section:WeightedSpaces} with an overview of algebraically weighted spaces and Fredholm properties of the Laplacian and related operators. The goal is then to use this information in Section \ref{section:Eikonal} to analyze equation \eqref{e:eikinh} when the inhomogeneity, $g$, is assumed to be algebraically localized. This represents a simpler version of the nonlocal case we want to understand, since the linearizations of equations \eqref{e:eikinh} and  \eqref{e:eiknl} about the constant solution share the same Fredholm properties. Finally, in Section \ref{section:Nonlocal} the procedures used to find solutions for the local case are extended  to solve the nonlocal problem with the following assumptions:

\begin{itemize}
\item[{\bf H1}] \textbf{Diffusive Kernel:} The kernel $G$ is continuous, even, exponentially localized with 
\[
\int G(x)\rmd x=1,\quad G_2:=\int x^2G(x)\rmd x>0.
\]
Moreover, we require that the Fourier transform satisfies $\hat{G} -1 \leq 0$, which encodes linear stability of the synchronous state at $\eps=0$. 
\item[{\bf H2}] \textbf{Non-local Transport:} The kernel $J$ is exponentially localized, even, twice continuously differentiable, and 
\[ 
J_0:=\int J(x)\rmd x\neq 0.
\]
\item[{\bf H3}] \textbf{Inhomogeneity:} The function $g$ is algebraically localized, that is, for some $\sigma>2$, we have
\[
\int(\partial^jg(x))^2(1+x^2)^{\sigma+4}\rmd x <\infty,\quad \mbox{ for } j=0,1,2.
\]
Moreover, we assume that
$
g_0:=\int g(x)\rmd x\neq 0
,$ and define $g_1:=\int xg(x)\rmd x.$
\end{itemize}
Note that \eqref{e:eiknl} possesses wave train solutions of the form $\phi(t,x)=kx-\omega t$, when the nonlinear dispersion relation
\[
\omega=\omega_\mathrm{nl}(k):=J_0^2 k^2
\]
is satisfied. We are interested in pacemakers (or sources), which, according to the above discussion and \cite{dsss}, resemble wave trains at $\pm\infty$ with outward pointing group velocity \cite{ssdefect,kollarscheel}
\[
\pm c_\mathrm{g}^\pm>0,\qquad \mbox{where }c_g^\pm=2 J_0^2 k_\pm.
\]
We are now ready to state the main result. 

\begin{thm}\label{thm:Nonlocal}
Consider the nonlocal eikonal equation \eqref{e:eiknl}, under Hypotheses (H1)--(H3). Then, there exists $\eps_0>0$ such that for all $0<|\eps|<\eps_0$ and $\mathrm{sign}(\eps)=-\mathrm{sign}(g_0)$, there exists a  solution of the form 
\[
\Phi(x,t; \eps) = \phi(x,\eps) + \left(\phi_0(\eps)+ k(\eps)x\right)\tanh(x)-\omega_\mathrm{nl}(k(\eps)) t,
\]
where $\phi_0,k$ are $C^1$ with 
\[ \phi_0'(0) = \frac{g_1}{G_2},\qquad k'(0)= -\frac{g_0}{G_2}, \]
and
\[
|\phi(x;\eps)|\to 0, \mbox{ for } |x|\to\infty,\mbox{ uniformly in } \eps.
\]
\end{thm}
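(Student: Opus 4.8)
The plan is to reduce the nonlocal problem \eqref{e:eiknl} to the viscous eikonal equation \eqref{e:eikinh} (treated in Section~\ref{section:Eikonal}) via a chain of isomorphisms, then apply the Kondratiev-space Fredholm theory for the Laplacian from Section~\ref{section:WeightedSpaces}. Concretely, I would first substitute the ansatz $\Phi(x,t;\eps)=\psi(x;\eps)-\omega_\mathrm{nl}(k(\eps))t$, where $\psi(x;\eps)=\phi(x,\eps)+(\phi_0(\eps)+k(\eps)x)\tanh(x)$, into the steady equation $-\omega=-\psi+G\ast\psi-(J'\ast\psi)^2+\eps g$. The role of the $\tanh(x)$ ``corrector'' term is to absorb the non-decaying far-field behavior of a genuine source: $\psi$ itself is not localized, but after subtracting the explicit profile $(\phi_0+kx)\tanh(x)$ the remainder $\phi$ lies in a weighted (Kondratiev-type) space. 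So the first real step is to plug in, expand $G\ast[(\phi_0+kx)\tanh x]$ and $J'\ast[(\phi_0+kx)\tanh x]$ using that $G,J$ are exponentially localized and even (the convolution of an exponentially localized even kernel with $x\tanh(x)$ differs from $x\tanh(x)$ by a localized function plus explicit moment terms governed by $G_2$, $J_0$), and collect the equation for $\phi$ into the schematic form $\mathcal{L}\phi = \mathcal{N}(\phi,\phi_0,k,\eps)$, where $\mathcal{L}$ is the linearization of the nonlocal operator at the constant state and $\mathcal{N}$ collects the inhomogeneity $\eps g$, the quadratic terms, and the residual from the corrector.

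The second step is to understand $\mathcal{L}=-\mathrm{Id}+G\ast\,\cdot\;-2J_0(\bar\phi)'\,J'\ast\,\cdot$; at the synchronous state $\bar\phi$ constant the transport term drops and $\widehat{\mathcal{L}}(\xi)=\hat G(\xi)-1$, which by (H1) vanishes quadratically at $\xi=0$ like $-\tfrac{1}{2}G_2\xi^2$ and is strictly negative elsewhere. This is exactly the symbol structure of $\tfrac{1}{2}G_2\Delta$ up to an invertible (elliptic, order-zero) Fourier multiplier that is smooth and bounded away from zero. So I would factor $\mathcal{L} = \tfrac12 G_2\,\Delta\circ M$ (or $M\circ\Delta$) with $M$ an isomorphism on the relevant scale of weighted Sobolev/Kondratiev spaces — this is the ``series of isomorphisms'' alluded to in the abstract, and it is precisely where the nonlocal problem inherits the Fredholm properties of the Laplacian claimed in the introduction. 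Then, using the Fredholm theory of $\Delta$ on Kondratiev spaces $M^{s}_{\gamma}(\mathbb R)$ from Section~\ref{section:WeightedSpaces}: on $\mathbb R$ the Laplacian between appropriately chosen weights is Fredholm with a one-dimensional cokernel, and the solvability condition is that the right-hand side integrate to zero against constants, i.e.\ $\int(\text{RHS})=0$. The two free parameters $\phi_0$ and $k$ are then tuned: $k$ is chosen so that the leading moment $\int(\text{RHS})$ vanishes, which at $\eps=0$ forces $k'(0)=-g_0/G_2$ (the $G_2$ coming from the second moment of $G$, the $g_0$ from $\int g$), and $\phi_0$ is chosen to kill the next-order (first-moment / log or linear growth) obstruction, giving $\phi_0'(0)=g_1/G_2$. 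The sign restriction $\mathrm{sign}(\eps)=-\mathrm{sign}(g_0)$ enters here because only that sign makes the group velocity $c_g^\pm=2J_0^2k_\pm$ point outward — this is a genuine constraint from the source (vs.\ sink) selection, already visible in the local case in Section~\ref{section:Eikonal}.

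The third step is the fixed-point argument. With $\mathcal{L}$ inverted on the codimension-one-corrected space, rewrite the problem as $(\phi,\phi_0,k) = \mathcal{T}(\phi,\phi_0,k;\eps)$ where $\mathcal{T}$ incorporates the bounded right inverse of $\mathcal{L}$, the parameter-selection conditions, and the nonlinearity. The quadratic term $(J'\ast\phi)^2$ is the main nonlinearity; since $J$ is exponentially localized and $C^2$, $J'\ast$ maps the weighted space boundedly (indeed smoothing), and the square of a function decaying in a Kondratiev weight decays in a better weight, so $\mathcal{N}$ is a smooth (at least $C^1$) map that is quadratically small in $(\phi,\phi_0,k)$ plus a term of size $O(\eps)$ from $\eps g$ (finite by (H3), which demands $\sigma>2$ and two derivatives with weight $(1+x^2)^{\sigma+4}$ — the extra $+4$ is the slack needed to land in the domain of the inverse of $\Delta$ after applying $J'\ast$ and squaring). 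A contraction-mapping / uniform Implicit Function Theorem argument on a ball of radius $O(\eps)$ then yields a unique solution for $0<|\eps|<\eps_0$, with $\phi_0,k$ inheriting $C^1$ dependence on $\eps$ and the stated derivatives at $0$ read off from the linearization of the selection conditions; $\phi$ lies in a weighted space continuously embedded in functions vanishing at infinity uniformly in $\eps$.

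I expect the main obstacle to be the algebraic (rather than exponential) localization combined with the nonlocality: establishing that the operators $G\ast\,\cdot$ and $J'\ast\,\cdot$ act appropriately on the Kondratiev scale and, crucially, that the error terms generated by convolving the exponentially localized kernels against the non-decaying corrector $(\phi_0+kx)\tanh(x)$ are localized in the correct weighted space and have the right moments — this bookkeeping of boundary/moment terms is where the precise exponents in (H3) and the factor $G_2$ in $k'(0),\phi_0'(0)$ are pinned down. A secondary difficulty is the careful factorization $\mathcal{L}=\tfrac12 G_2\Delta\circ M$ uniformly down to the Fredholm borderline weight, since the Fourier multiplier $M$ must be shown invertible on weighted spaces (not just $L^2$), which requires controlling its symbol's smoothness and decay at $\xi=0$ and $\xi=\infty$; this is presumably handled by the isomorphism lemmas of Section~\ref{section:WeightedSpaces}, which I would invoke directly.
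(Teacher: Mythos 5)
Your overall architecture (far-field corrector $(\phi_0+kx)\tanh x$, preconditioning $-I+G\ast$ by an invertible multiplier to reduce to $\partial_{xx}$, Fredholm theory of the Laplacian in Kondratiev spaces with the two parameters killing the cokernel, leading to $k'(0)=-g_0/G_2$, $\phi_0'(0)=g_1/G_2$) matches the paper. But there is a genuine gap in your third step. You place the cross term between the corrector and the unknown --- schematically $2b\,(J'\ast\rho)(J'\ast(xS))\sim 2bS\partial_x\rho$, since $J'\ast(xS)\to\pm J_0$ at infinity --- into the nonlinearity $\mathcal{N}$, to be absorbed by a contraction on a ball of radius $O(\eps)$. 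This does not close: for $\rho\in M^{2,p}_{\sigma}$ one only gets $S\partial_x\rho\in L^p_{\sigma+1}$, not $L^p_{\sigma+2}$, so this term does not land in the range space on which $\partial_{xx}$ is Fredholm with the two-dimensional cokernel you need, and the fixed-point map is not well defined from the space to itself. The paper's resolution is to absorb this term into the linearization, $\mmL_b=\partial_{xx}-2bS\partial_x$ on a domain $\mathcal{D}=\{u\in M^{2,p}_\sigma: u_x\in L^p_{\sigma+2}\}$, prove invertibility of the bordered operator $\mmT_b$ with bounds that are uniform in $b\ge 0$ (at the cost of two degrees of localization), and then establish $C^1$ dependence of $\mmT_b^{-1}$ and $b\mmT_b^{-1}$ on $b$ by giving away further localization --- this trading of weights is precisely why (H3) carries the exponent $\sigma+4$, and in the nonlocal case it additionally forces the decomposition $N_2=D\bar N_2+N_3$ with $D=\partial_x(1-\partial_x)^{-1}$ to recover enough decay. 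None of this parameter-differentiability analysis, which is the bulk of the paper, appears in your plan, and it is needed because $b=\eps(b_1+\beta)$ depends on the unknown $\beta$, so the $b$-dependence of the preconditioner enters the Fr\'echet derivative required by the Implicit Function Theorem.

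Relatedly, you attribute the restriction $\mathrm{sign}(\eps)=-\mathrm{sign}(g_0)$ to an outward-group-velocity selection principle. That is the correct interpretation, but in the proof the condition arises concretely: the cokernel of $\mmL_b$ is spanned by $\cosh(x)^{-2b}$ and $\int_0^x(\cosh x/\cosh y)^{-2b}\,\rmd y$, which lie in the dual weighted space only for $b\ge 0$, so the whole linear theory is one-sided in $b$; since $b\approx\eps b_1$ with $b_1=-g_0/G_2$, only one sign of $\eps$ is admissible. A minor further point: the cokernel of $\partial_{xx}:M^{2,p}_{\gamma-2}\to L^p_\gamma$ for $\gamma>2-1/p$ is two-dimensional (spanned by $1$ and $x$), not one-dimensional as you state; your tuning of both $k$ and $\phi_0$ implicitly uses this, but the bookkeeping should be stated correctly.
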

Note that the sign of $\partial_x\Phi$ is such that group velocities point outwards in the far field, since $\partial_x\Phi\sim \pm\eps k'(0)=\pm\eps g_0/G_2>0$ for $x\to\pm\infty$.

More precise statements on the dependence of $\phi$ on $\eps$ and $x$ can be found in the proof. For instance, $\phi$ is $C^1$ in $\eps$ with values in $M^{2,2}_{\sigma}$, a Kondratiev space that we shall define below. One could also obtain higher-order expansions in $\eps$ by assuming more localization on $g$ as we shall see from the proof.  Additionally, the result readily generalizes to more general nonlinearities $f(J'\ast\phi)$, $f(u)=\rmO(u^2)$. 

On the other hand, we do not believe that our assumptions on localization are sharp. But then again, critical localization is not fully understood, even in the simple conjugate problem of Schr\"odinger eigenvalue bifurcations from the essential spectrum; see for instance \cite{sscrit} and references therein.

\paragraph{Outline.} The remainder of this paper is organized as follows. We first introduce function spaces that will be used throughout the remainder of this paper and state some basic results on Fredholm properties of differential operators in Section \ref{section:WeightedSpaces}. Section \ref{section:Eikonal} is concerned with a warm-up problem: we prove Theorem \ref{thm:Nonlocal} in the (local) case of the eikonal equation, replacing nonlocal oprators by differential operators as in \eqref{e:eik}. We then move to the proof of Theorem \ref{thm:Nonlocal} in Section \ref{section:Nonlocal}. The Appendix contains some rather elementary results on Fredholm properties of one-dimensional differential operators in Kondratiev spaces that we were unable to locate in the literature. 

 \section{Weighted and Kondratiev spaces} \label{section:WeightedSpaces}
We give a short summary of the weighted spaces that we will be using throughout. We also collect Fredholm properties of various differential operators. 
\subsection{Algebraic weights}
We use weighted Lebesgue $L^p$- and Sobolev $W^{k,p}$-spaces on the real line, defined as completions of $C^\infty_0$ under the norms
\begin{equation*}
\|u\|_{L^p_\gamma}=\left(\int |u(x)\langle x \rangle^{\gamma}|^p     \right)^{1/p},\qquad
\|u\|_{W^{k,p}_\gamma}=\left(\sum_{0\leq \alpha\leq k}\|\partial_x^\alpha u\|_{L^p_\gamma}\right)^{1/p},
\end{equation*}
where $\langle x \rangle = ( 1+|x|^2)^{1/2}$, $1\leq p<\infty$, and $k\in\bbN$. Note that  $W_{\beta}^{k.p}\subset W^{k,p}_{\alpha}$ whenever  $ \alpha < \beta$.  Also note that for $p=2$, the Fourier transform maps $\mathcal{F}:W^{s,2}_{\gamma}\to W^{\gamma,2}_{s}$, where one can consider this mapping as a definition of the fractional Sobolev spaces $W^{\gamma,2}$. We also write $W^{-k,q}_{-\gamma}$, $1/p+1/q=1$, for the dual of $W^{k,p}_\gamma$, so that the statement on Fourier images can be extended to negative values of $\gamma$. The following result is routine. 
\begin{prop}\label{prop:(1-dxx)}
The operators $1-\partial_{xx} : W_{\gamma}^{k+2,p} \rightarrow W^{k,p}_{\gamma}$ and $1-\partial_{x} : W_{\gamma}^{k+1,p} \rightarrow W^{k,p}_{\gamma}$ are isomorphism for all $p\in(1,\infty)$ and all $\gamma\in\R$.
\end{prop}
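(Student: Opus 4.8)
The plan is to exhibit explicit convolution inverses and check that they are bounded on the weighted spaces directly, rather than to reduce to the unweighted case. The Fourier symbols of $1-\partial_{xx}$ and $1-\partial_x$ are $1+\xi^2$ and $1-\mathrm i\xi$, both bounded away from zero; their reciprocals are the Fourier transforms of the exponentially localized kernels $G_0(x)=\tfrac12\mathrm e^{-|x|}$ (symbol $(1+\xi^2)^{-1}$) and $h(x)=\mathrm e^{x}\mathbf 1_{(-\infty,0)}(x)$ (symbol $(1-\mathrm i\xi)^{-1}$). I would set $S_2u:=G_0\ast u$ and $S_1u:=h\ast u$ and claim these are the inverses. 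The substance is then to show (i) $S_2:W^{k,p}_\gamma\to W^{k+2,p}_\gamma$ and $S_1:W^{k,p}_\gamma\to W^{k+1,p}_\gamma$ are bounded, and (ii) $S_i$ is a genuine two-sided inverse.

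For (i) the key elementary fact is that convolution with an exponentially localized $L^1$ kernel is bounded on every $L^p_\gamma$, $1\le p<\infty$, $\gamma\in\R$. Indeed, by Peetre's inequality $\langle x\rangle^{\gamma}\le 2^{|\gamma|/2}\langle x-y\rangle^{|\gamma|}\langle y\rangle^{\gamma}$, valid for \emph{all} real $\gamma$, one bounds $\langle x\rangle^\gamma|(K\ast u)(x)|$ pointwise by $2^{|\gamma|/2}\big((\langle\cdot\rangle^{|\gamma|}|K|)\ast(\langle\cdot\rangle^{\gamma}|u|)\big)(x)$; since $\langle\cdot\rangle^{|\gamma|}|K|\in L^1$, Young's inequality gives $\|K\ast u\|_{L^p_\gamma}\le\|\langle\cdot\rangle^{|\gamma|}K\|_{L^1}\,\|u\|_{L^p_\gamma}$. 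Because differentiation commutes with convolution, $\partial_x^j S_2u=G_0\ast\partial_x^j u$ for $j\le k$, which is controlled in $L^p_\gamma$ by the previous estimate applied to $\partial_x^j u$; for the top two orders one uses $\partial_x G_0=-\tfrac12\sgn(x)\mathrm e^{-|x|}\in L^1$ and the distributional identity $\partial_{xx}G_0=G_0-\delta_0$, so that $\partial_x^{k+2}S_2u=G_0\ast\partial_x^k u-\partial_x^k u\in L^p_\gamma$. Summing, $\|S_2u\|_{W^{k+2,p}_\gamma}\lesssim\|u\|_{W^{k,p}_\gamma}$, and the estimate for $S_1$ is identical using $\partial_x h=h-\delta_0$. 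Boundedness of $1-\partial_{xx}$ and $1-\partial_x$ in the opposite directions is trivial.

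For (ii), on Schwartz functions the identities $(1-\partial_{xx})S_2u=u=S_2(1-\partial_{xx})u$ and $(1-\partial_x)S_1u=u=S_1(1-\partial_x)u$ follow immediately from the Fourier representation. Since $C^\infty_0$ is by definition dense in each $W^{k,p}_\gamma$ and all operators in sight are bounded on the relevant spaces, these identities extend from $C^\infty_0$ to all of $W^{k+2,p}_\gamma$, resp.\ $W^{k+1,p}_\gamma$, by continuity. Hence $1-\partial_{xx}$ and $1-\partial_x$ are bounded bijections with bounded inverses $S_2$ and $S_1$.

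The only point requiring a little care is uniformity in $\gamma$, in particular for negative $\gamma$: this is exactly what Peetre's inequality handles, allowing all weights to be treated simultaneously. I would deliberately avoid the alternative of conjugating by $\langle x\rangle^\gamma$ to reduce to $\gamma=0$, since the lower-order remainder terms thereby produced have coefficients that merely decay rather than being relatively compact on $\R$, so a Fredholm-perturbation argument would not close without additional input; the direct convolution bound bypasses this altogether.
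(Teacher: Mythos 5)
Your argument is correct, but it takes a genuinely different route from the paper. The paper conjugates by the weight $\langle x\rangle^{\gamma}$ to reduce to the unweighted case $\gamma=0$, observing that the commutator terms produced by the conjugation are relatively compact perturbations and that the kernel is trivial, so that invertibility for $\gamma=0$ (where the inverse is explicit) transfers to all $\gamma$. You instead work directly on the weighted spaces: you write down the explicit convolution inverses $G_0=\tfrac12\rme^{-|x|}$ and $h=\rme^{x}\mathbf 1_{(-\infty,0)}$, prove boundedness of convolution with an exponentially localized kernel on every $L^p_\gamma$ via Peetre's inequality and Young's inequality, handle the top derivatives through the distributional identities $\partial_{xx}G_0=G_0-\delta_0$ and $\partial_x h=h-\delta_0$, and extend the inverse identities from the dense subspace $C^\infty_0$ by continuity. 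All of these steps check out (including the symbol computations and the jump computations for the distributional derivatives), and your approach has the virtue of being fully self-contained and uniform in $\gamma$, with explicit operator-norm bounds. One correction to your closing remark, though: the conjugation route does close. The commutator terms are of the form $a(x)\partial_x+b(x)$ with $a,b$ smooth and decaying like $\langle x\rangle^{-1}$ together with all derivatives, and multiplication by such a function is compact from $W^{k+1,p}(\R)$ to $W^{k,p}(\R)$ --- combine Rellich's theorem on bounded intervals with the uniform smallness of the tails --- so the perturbation is indeed relatively compact with respect to $1-\partial_{xx}$, the index is preserved, and triviality of the kernel (no algebraic weight accommodates $\rme^{\pm x}$) yields invertibility. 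So both proofs are valid; yours trades the compactness lemma for the explicit kernel estimates.
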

\begin{proof}
Multiplication by $\langle {\bf x} \rangle^{\gamma}$ provides an isomorphism between $W^{k,p}_\gamma$ and $W^{k,p}$. Since commutators of this multiplication operator and derivatives are relatively compact operators, and since the kernel is always trivial,  one readily concludes that it is sufficient to prove the result for $\gamma=0$, where in turn the inverse is explicitly known. 
\end{proof}

%
%
We remark that $\partial_{x}^k:W^{k,p} \rightarrow L^p$ does not have closed range, as an explicit Weyl sequence construction shows. 
An argument as in the preceding proof implies the same statement for $\partial_{x}^k:W^{k,p}_{\gamma} \rightarrow L^p_{\gamma}$.

\subsection{Kondratiev spaces}
Defining a norm for which localization increases with each derivative can help recover Fredholm properties. We define the Kondratiev spaces as completions in $C^\infty_0$ under the norm
\[ 
\|u \|_{M^{k,p}_{\gamma}} = \left( \sum_{|\alpha|\leq k} \| \partial_x^{\alpha} u \cdot \langle x \rangle^{\gamma+|\alpha|} \|^p_{L^p} \right)^{1/p}.
\]
We will denote the dual space to $M^{k,p}_{\gamma}$ by $M^{-k,q}_{-\gamma}$, where $1/p+1/q=1$.
Such spaces have been used extensively in regularity theory \cite{kondrat1967boundary}, fluids \cite{melcher2008hopf,specovius1986exterior}, and in our prior work on inhomogeneities \cite{jaramillo1,jaramillo2}. Fredholm properties for the Laplacian and various generalizations have been established in \cite{nirenberg1973null,lockhart1981fredholm, lockhart1983elliptic, lockhart1985elliptic}. We will rely on the following, much more elementary result. 
%
%
 \begin{prop}\label{prop:FredholmCompDerivatives}
 Let $m$ and $l $ be non negative integers, and $p \in (1, \infty)$. Then, the operator $$ (1-\partial_x)^{-\ell}\partial_x^m :M^{m,p}_{\gamma-m} \rightarrow W^{\ell,p}_{\gamma},$$ is Fredholm for $\gamma+1/p\not\in \{1,\ldots,m\}$. In particular,
 \begin{itemize}
 \item for $\gamma < 1-1/p$ it is surjective with kernel spanned by $\bbP_m$;
 \item for $\gamma >m-1/p$ it is injective with cokernel spanned by $\bbP_m$;
 \item for $j-1/p< \gamma< j+1-1/p$, where $j \in \bbN$, $1 \leq j <m$, its kernel is spanned by $\bbP_{m-j}$ and its cokernel is spanned by $\bbP_j$.
 \end{itemize}
For  $\gamma+1/p\in \{1,\ldots,m\}$, the operator does not have close range.  Here, $\bbP_m$ is the $m$-dimensional space of all polynomials with degree less than $m$. 
\end{prop}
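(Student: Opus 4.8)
The plan is to reduce the statement to the known Fredholm theory for the Laplacian on Kondratiev spaces — or, more elementarily, to the one-dimensional differential operators analyzed in the Appendix — by factoring the operator appropriately. Write $T = (1-\partial_x)^{-\ell}\partial_x^m$. The first step is to observe that $(1-\partial_x)^{-\ell}: W^{0,p}_\gamma \to W^{\ell,p}_\gamma$ is an isomorphism by Proposition \ref{prop:(1-dxx)} (iterated $\ell$ times, since $1-\partial_x: W^{k+1,p}_\gamma \to W^{k,p}_\gamma$ is an isomorphism for every $k$, $\gamma$, $p$). Hence $T$ is Fredholm as a map into $W^{\ell,p}_\gamma$ if and only if $\partial_x^m: M^{m,p}_{\gamma-m}\to L^p_\gamma$ is Fredholm, with the same kernel, and with cokernel of the same dimension; moreover a cokernel complement of $\partial_x^m$ is carried by the isomorphism to a cokernel complement of $T$, so it suffices to identify kernel and cokernel of $\partial_x^m: M^{m,p}_{\gamma-m}\to L^p_\gamma$. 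Note the weight shift is exactly engineered so that the target weight on the $m$-th derivative term of the Kondratiev norm matches $L^p_\gamma$.

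The second step is the kernel computation, which is elementary: $u \in M^{m,p}_{\gamma-m}$ with $\partial_x^m u = 0$ forces $u$ to be a polynomial of degree $< m$, and one checks which such polynomials actually lie in $M^{m,p}_{\gamma-m}$. A monomial $x^j$ ($0\le j < m$) has $\partial_x^i(x^j)\langle x\rangle^{\gamma-m+i}$ behaving like $x^{j-i}\langle x\rangle^{\gamma-m+i}\sim \langle x\rangle^{\gamma-m+j}$ at infinity for $i \le j$ (and $=0$ for $i>j$), so $x^j \in M^{m,p}_{\gamma-m}$ iff $(\gamma-m+j)p < -1$, i.e. $\gamma < m-j-1/p$. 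Thus for $\gamma < 1-1/p$ all of $\bbP_m$ is in the kernel; for $j-1/p<\gamma<j+1-1/p$ exactly the monomials with $m-j-1/p > \gamma$, i.e. $j' < m-j$, survive, giving $\bbP_{m-j}$; and for $\gamma > m-1/p$ the kernel is trivial. The cokernel is handled dually: the cokernel of $\partial_x^m: M^{m,p}_{\gamma-m}\to L^p_\gamma$ is the kernel of its adjoint $(-\partial_x)^m: (L^p_\gamma)^* = L^q_{-\gamma} \to (M^{m,p}_{\gamma-m})^* = M^{-m,q}_{-(\gamma-m)} = M^{-m,q}_{m-\gamma}$, and one computes exactly as before which polynomials of degree $<m$ lie in $L^q_{-\gamma}$: the monomial $x^i$ lies there iff $(-\gamma+i)q<-1$, i.e. $i < \gamma - 1/q = \gamma - 1 + 1/p$. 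For $\gamma < 1-1/p$ this admits no monomials (cokernel trivial, surjective); for $\gamma > m-1/p$ all of $\bbP_m$; and in the intermediate range $j-1/p<\gamma<j+1-1/p$ exactly $i < j$ survive, giving $\bbP_j$ — matching the claimed index. Finally, when $\gamma + 1/p \in \{1,\dots,m\}$ the relevant exponent equality $(\gamma-m+j)p=-1$ puts a borderline polynomial exactly on the boundary of the space; a Weyl-sequence / non-closed-range argument (analogous to the remark that $\partial_x^k: W^{k,p}\to L^p$ lacks closed range, e.g. using a cutoff of the critical polynomial at scale $R\to\infty$) shows the range is not closed.

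The routine-but-necessary part is making rigorous that "$u\in M^{m,p}_{\gamma-m}$ with $\partial_x^m u = 0$ is a polynomial of degree $<m$'' — $\partial_x^m u = 0$ in the distributional sense gives this immediately — and, more substantively, establishing that $\partial_x^m$ has \emph{closed range} in the Fredholm regime and that the index is as stated; this is where I would invoke the Appendix results on one-dimensional differential operators in Kondratiev spaces rather than reprove everything. Concretely, one can integrate: given $f\in L^p_\gamma$, define $u$ by $m$-fold integration with integration constants chosen (when $\gamma$ is large enough to allow it) so that $u$ decays, using Hardy-type inequalities $\|\int_x^\infty f\|_{L^p_{\gamma+1}} \lesssim \|f\|_{L^p_\gamma}$ valid precisely when $\gamma+1+1/p \neq 0$ — i.e. away from the excluded weights — to control each antiderivative in the correct Kondratiev norm. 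This explicit inversion simultaneously yields closed range, the solvability conditions (the $\bbP_j$ cokernel), and continuity of the (partial) inverse. The main obstacle is thus the bookkeeping of which integration constants are permissible in each weight regime and verifying the Hardy estimates at each of the $m$ integration steps hit exactly the forbidden set $\gamma+1/p\in\{1,\dots,m\}$; everything else is a direct translation through the isomorphism of Proposition \ref{prop:(1-dxx)}.
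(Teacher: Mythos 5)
Your proposal is correct and follows essentially the same route as the paper: strip off $(1-\partial_x)^{-\ell}$ as an isomorphism via Proposition \ref{prop:(1-dxx)}, reduce to $\partial_x^m:M^{m,p}_{\gamma-m}\to L^p_{\gamma}$, identify kernel and cokernel by checking which polynomials lie in the weighted spaces, and invert by explicit integration with Hardy-type estimates that degenerate exactly at $\gamma+1/p\in\{1,\dots,m\}$ (the paper proves the case $m=1$ in separate lemmas and composes using additivity of the index, identifies the cokernel directly as the annihilator of the range rather than via the adjoint, and argues non-closed range at the critical weights by an index-jump argument rather than a Weyl sequence). One small slip: your Hardy inequality should read $\bigl\|\int_x^\infty f\bigr\|_{L^p_{\gamma-1}}\lesssim\|f\|_{L^p_{\gamma}}$ --- integration loses one degree of localization --- with the constant blowing up as $\gamma-1+1/p\to 0$, consistent with the forbidden set you correctly identify.
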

We give a proof of this proposition in the Appendix. 
%

\section{The Eikonal Equation --- a Local Warm-Up}\label{section:Eikonal}
We consider the local eikonal equation 
\begin{equation}\label{eq:peikonal}
\phi_t = \partial_{xx} \phi -( \partial_x \phi)^2+ \eps g(x), \quad x \in \bbR,\quad \eps>0,
\end{equation} 
and look for wave sources in the spirit of Theorem \ref{thm:Nonlocal}. Of course, wave trains $\phi=kx-\omega t$ exist for $\eps=0$ and when $\omega=\omega_\mathrm{nl}(k)=k^2$. 
\begin{thm}\label{thm:Eikonal}
Consider  \eqref{eq:peikonal}, under Hypothesis (H3). Then, there exists $\eps_0>0$ such that for all $0<|\eps|<\eps_0$ and $\mathrm{sign}(\eps)=- \mathrm{sign}(g_0)$, there exists a  solution of the form 
\[
\Phi(x,t; \eps) = \phi(x,\eps) +\left(\phi_0(\eps)+ k(\eps)x\right)\tanh(x)-\omega_\mathrm{nl}(k(\eps)) t,
\]
where $\phi_0,k$ are $C^1$ with 
\[ \phi_0'(0) = \frac{1}{2}{g_1},\qquad k'(0)=-\frac{1}{2} {g_0}, \]
and
\[
|\phi(x;\eps)|\to 0, \mbox{ for } |x|\to\infty,\mbox{ uniformly in } \eps.
\]
\end{thm}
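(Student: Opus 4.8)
The plan is to convert the steady-state problem into an equation for the correction $\phi$, with $(\phi_0,k)$ as auxiliary parameters, and then to solve it by the Implicit Function Theorem using the Fredholm theory of Proposition~\ref{prop:FredholmCompDerivatives}. First I would substitute the ansatz $\Phi(x,t)=\phi(x)+(\phi_0+kx)\tanh(x)-k^2t$ into $0=\partial_{xx}\Phi-(\partial_x\Phi)^2+\eps g$. The key point is that the ``outer'' profile $\psi_{\mathrm{out}}(x):=(\phi_0+kx)\tanh(x)$ is a smooth, bounded, $x$-odd-free fix that interpolates between $-\phi_0-kx$ at $-\infty$ and $\phi_0+kx$ at $+\infty$, so that $\partial_x\psi_{\mathrm{out}}\to \pm k$ and $(\partial_x\psi_{\mathrm{out}})^2\to k^2$ at $\pm\infty$; the constant $-k^2t$ in $\Phi$ is exactly chosen to cancel this. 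What remains after expansion is an equation of the form
\[
\partial_{xx}\phi-2\,\partial_x\psi_{\mathrm{out}}\,\partial_x\phi-(\partial_x\phi)^2 = -\eps g - R(x;\phi_0,k),
\]
where $R=\partial_{xx}\psi_{\mathrm{out}}-\big((\partial_x\psi_{\mathrm{out}})^2-k^2\big)$ collects the residual from the outer profile. The crucial structural facts are that $R$ is exponentially localized (since $\tanh'(x)=\mathrm{sech}^2x$ and $1-\tanh^2x$ decay exponentially) and that, as $(\phi_0,k)\to 0$, $R\to 0$; moreover its linear-in-$(\phi_0,k)$ part can be computed explicitly, which is what produces the derivatives $\phi_0'(0)$ and $k'(0)$.

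Next I would set up the functional-analytic framework. Because $g\in M^{2,2}_{\sigma}$ with $\sigma>2$ (by H3, after noting $L^2_{\sigma+4}\hookrightarrow M^{2,2}_\sigma$-type embeddings the paper sets up), I would look for $\phi\in M^{2,2}_{\sigma}$. Rewriting the principal linear part using Proposition~\ref{prop:(1-dxx)} and Proposition~\ref{prop:FredholmCompDerivatives}, the operator $\partial_{xx}=\partial_x^2$ from $M^{2,2}_{\sigma}$ to $L^2_{\sigma+2}$... more precisely $(1-\partial_x)^{-\ell}\partial_x^2\colon M^{2,2}_{\gamma-2}\to W^{\ell,2}_\gamma$, is Fredholm for the relevant range of $\gamma$; for $\gamma$ just above $2-1/p=3/2$, i.e. for $\sigma$ in the right window, it is injective with one-dimensional cokernel spanned by the constants, and for $\gamma$ below $1-1/p$ it is surjective with kernel $\bbP_2$ (affine functions). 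The strategy is then the familiar one for solvability against a non-closed-range obstruction: the two free constants $\phi_0,k$ (equivalently, the freedom of adding an affine function absorbed into the outer profile) are used to kill the finite-dimensional cokernel, i.e. to enforce the solvability condition $\int(\text{RHS})=0$ and a second moment-type condition $\int x(\text{RHS})=0$. These two scalar conditions, together with the PDE for $\phi$ in the complement of the cokernel, form a system $F(\phi,\phi_0,k,\eps)=0$ with $F(0,0,0,0)=0$.

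Then I would verify the hypotheses of the Implicit Function Theorem for $F$ at $(\phi,\phi_0,k,\eps)=(0,0,0,0)$. The linearization in $\phi$ at the origin is just $\partial_{xx}$ (the quadratic term $(\partial_x\phi)^2$ and the cross term $2\partial_x\psi_{\mathrm{out}}\partial_x\phi$ both vanish to this order since $\psi_{\mathrm{out}}$ is $O(\phi_0,k)$), restricted to the appropriate codimension-corrected spaces, which is an isomorphism by Proposition~\ref{prop:FredholmCompDerivatives}; the linearization in $(\phi_0,k)$ feeds into the two scalar constraints through $\partial_{(\phi_0,k)}R$, and the computation that $\int \partial_k R\,\rmd x$ and $\int x\,\partial_{\phi_0}R\,\rmd x$ etc.\ form an invertible $2\times 2$ matrix (whose entries are elementary integrals of $\mathrm{sech}^2$ against $1$ and $x$) is where the coefficients $-\tfrac12 g_0$ and $\tfrac12 g_1$ and the sign condition $\mathrm{sign}(\eps)=-\mathrm{sign}(g_0)$ emerge; positivity of the relevant integral of $\mathrm{sech}^2$ forces the sign. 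The IFT then yields $C^1$ branches $\phi(\eps)\in M^{2,2}_\sigma$, $\phi_0(\eps),k(\eps)\in\bbR$, and differentiating the constraint system at $\eps=0$ gives the stated derivatives; the decay $|\phi(x;\eps)|\to0$ uniformly in $\eps$ follows from the embedding $M^{2,2}_\sigma\hookrightarrow C^0_{\sigma'}$ for $\sigma'<\sigma-1/2$ together with uniform boundedness of the branch.

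The main obstacle I expect is bookkeeping the exact weight exponents so that simultaneously (i) $g$ and the residual $R$ lie in the target space, (ii) $\partial_{xx}$ is Fredholm with the clean injective-with-constant-cokernel (or surjective-with-affine-kernel) alternative needed, and (iii) the nonlinearity $\phi\mapsto(\partial_x\phi)^2$ maps $M^{2,2}_\sigma$ boundedly and smoothly into the target $L^2$-type space — this last point needs $\partial_x\phi\in M^{1,2}_{\sigma+1}$ to multiply into something with weight $\sigma+2$, which requires a Kondratiev-space algebra/multiplication lemma (and is exactly why H3 demands the generous exponent $\sigma+4$ and control of two derivatives). Threading these three constraints through a single consistent choice of $\gamma$, and checking that the cokernel-killing via $(\phi_0,k)$ is compatible with the decay of $\phi$, is the technical heart; the rest is a standard IFT argument once the spaces are pinned down.
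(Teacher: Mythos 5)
Your skeleton (far-field ansatz $(\phi_0+kx)\tanh(x)$, Fredholm properties of $\partial_{xx}$ in Kondratiev spaces with the two-dimensional cokernel killed by the two free constants, leading-order moments giving $\phi_0'(0)$ and $k'(0)$, then an Implicit Function Theorem) matches the paper's overall structure. But there is a genuine gap exactly at the step where you invoke the IFT: you treat the cross term $-2\partial_x\psi_{\mathrm{out}}\,\partial_x\phi\approx-2kS\partial_x\phi$ as part of the higher-order remainder, arguing that it ``vanishes to this order.'' The obstruction is not the value of its linearization at the origin but well-definedness and regularity of the nonlinear map: for $\phi\in M^{2,2}_{\gamma-2}$ one only gets $S\partial_x\phi\in L^2_{\gamma-1}$, not $L^2_{\gamma}$, so the map $F$ you propose does not send $M^{2,2}_{\sigma}\times\bbR^2$ into the space where $\partial_{xx}$ is Fredholm, and is in particular not $C^1$ there. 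This is precisely the difficulty the paper flags after \eqref{eq:eikonal2}. The paper's resolution is to absorb the transport term into the linear operator that is inverted, $\mmT_b(\rho,\alpha,\beta)=\partial_{xx}\rho-2bS\partial_x\rho+\alpha\partial_{xx}S+\beta\partial_{xx}(xS)$, precondition the equation by $\mmT_b^{-1}$, and then prove --- and this is the technical heart, Sections \ref{subsec:prelim}--\ref{subsec:ContinuousbTb} --- that $\mmT_b^{-1}$ and $b\mmT_b^{-1}$ are uniformly bounded, Lipschitz, and differentiable in $b\geq0$ at the cost of two, three, and four degrees of algebraic localization respectively, the loss being compensated by the extra localization gained by the quadratic nonlinearity (this is why H3 demands weight $\sigma+4$). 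None of this delicate $b$-dependence appears in your proposal, and without it the IFT cannot be applied.

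Relatedly, your explanation of the sign condition (``positivity of the relevant integral of $\mathrm{sech}^2$'') is not where it comes from: the $2\times2$ matrix of moments in \eqref{e:SxS} is invertible regardless of signs. The restriction $\mathrm{sign}(\eps)=-\mathrm{sign}(g_0)$ arises because the cokernel of $\mmL_b=\partial_{xx}-2bS\partial_x$ is spanned by $\cosh(x)^{-2b}$ and $\int_0^x\left(\cosh(x)/\cosh(y)\right)^{-2b}\rmd y$, which lie in the dual space only for $b\geq 0$; since $b\approx\eps b_1=-\eps g_0/2$, the construction forces $-\eps g_0>0$. That your route produces no mechanism for this condition is a further indication that, as proposed, the argument would not close.
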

\begin{rem}
This result can be obtained directly using the Hopf-Cole linearizing transformation and results on eigenvalues of Schr\"odinger operators with small potentials; see \cite{simon,kollarscheel}. Our proof here is significantly more involved but lays the ground for the nonlocal result, Theorem \ref{thm:Nonlocal}.
\end{rem}

The proof of Theorem \ref{thm:Eikonal} is organized as follows. We find first-order approximations to solutions of \eqref{eq:peikonal} in Section \ref{subsec:O(1)}. In Section \ref{subsec:OperatorFb}, we formulate the problem of existence of $\Phi$ as finding the zeros of a nonlinear operator $F_{b}:M^{2,p}_{\sigma} \times \bbR^2 \rightarrow M^{2,p}_{\sigma} \times \bbR^2$, which we construct as the composition of a linear preconditioner $\mmT_{b}^{-1}$, and nonlinear operators $\tilde{N}_1$ and $\tilde{N}_2$. The additional variables $(a,b)\in\R^2$ stand for explicit far-field corrections, which will yield the terms including $\phi_0$ and $k$ in the expansion. We make $b$, which accounts for the most dramatic correction, explicit as an index. Our strategy is to show that $F_b$ satisfies the conditions of the Implicit Function Theorem.  We therefore show bounded invertibility of  $\mmT_b$ in Section \ref{subsec:prelim} for $b\geq 0$. In Sections \ref{subsec:ContinuousTb} and \ref{subsec:ContinuousbTb}, we 
prove that $\mmT_b^{-1}$ and $b\mmT_b^{-1}$ are smooth and continuously differentiable with respect to the parameter $b$, for $b\geq 0$. We finally combine these results in Section \ref{subsec:ProofEikonal} to prove Theorem \ref{thm:Eikonal}. In the following, we will always assume $g_0=\int g<0$ and choose $\eps>0$.
\subsection{First order approximations}\label{subsec:O(1)}
To start with, notice that the linearization of \eqref{eq:peikonal} about the constant solution results in the Laplace operator, which, according to  Proposition \ref{prop:FredholmCompDerivatives} is Fredholm for $\gamma > 2- 1/p$ on
\[ 
\partial_{xx} : M^{2,p}_{\gamma-2} \rightarrow L^p_{\gamma},
\]
with index $-2$ and cokernel spanned by $\{ 1, x\}$. We therefore insert the Ansatz,
\begin{equation*}
\phi(x,t) = \tilde{\phi}(x) + a S(x) + b x S(x) - b^2 t, \quad S(x) = \tanh(x), \ a, b \in \bbR, 
\end{equation*}
into  \eqref{eq:peikonal} and obtain, dropping tildes,
\begin{equation}\label{eq:eikonal2}
0= \partial_{xx}\phi -2bS \partial_x\phi+ a\partial_{xx}S + b \partial_{xx}(xS)+ \eps g- N(\phi, a, b),
\end{equation}
where we gathered all nonlinear terms in 
\begin{equation}\label{eq:nonlinearities}
 N(\phi,a,b) = (\partial_x \phi)^2 - b^2(1 -S^2) + 2( \partial_x \phi + bS)(a + bx) \partial_x S + ( a+bx)^2(\partial_x S)^2.
 \end{equation}
We would like to consider the right-hand side of  \eqref{eq:eikonal2} as a map from $M^{2,2}_{\gamma-2}\times \bbR^2$ into $L^2_{\gamma}$ for some $\gamma>2-1/p$, and then use the Fredholm properties of the Laplacian along with Lyapunov-Schmidt reduction to find solutions. The main difficulty is that for $\phi \in M^{2,2}_{\gamma-2}$, we do not obtain $S\partial_x \phi\in L^2_{\gamma}$, so that the right-hand side is not well defined.
Nonetheless, it is still possible to formally find a first order approximation for the solution, inserting an expansion of the form $\phi= \eps \phi_1, a=\eps a_1, b=\eps b_1$. At leading order, we find
\begin{equation}
 \partial_{xx} \phi_1 + a_1 \partial_{xx} S + b_1\partial_{xx}(xS) =-g.
\label{e:1st} \end{equation} 
Computing the scalar products
\begin{equation}\label{e:SxS}
\langle \partial_{xx}S, 1\rangle = 0, \quad\langle \partial_{xx}S,  x\rangle =-2, \quad \langle \partial_{xx}(xS),1 \rangle = 2,\quad \langle \partial_{xx}(xS), x \rangle =0,
\end{equation}
shows that  $\partial_{xx}S$ and $\partial_{xx}(xS)$, span the cokernel of $\partial_{xx}: M^{2,p}_{\gamma-2} \rightarrow L^p_{\gamma}$, and we obtain the following result.
\begin{lem}\label{lem:InvertibleLaplace}
For any $\gamma>2-1/p$, the operator $A: M^{2,p}_{\gamma-2} \times \bbR^2 \rightarrow L^p_{\gamma}$ defined as 
\[
A(\phi,a,b) = \partial_{xx} \phi + a \partial_{xx}S + b\partial_{xx}(xS),
\]
is invertible.
\end{lem}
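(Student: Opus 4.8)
The plan is to treat $A$ as a rank-two bordered version of the Laplacian $\partial_{xx}\colon M^{2,p}_{\gamma-2}\to L^p_\gamma$, whose Fredholm structure is already supplied by Proposition~\ref{prop:FredholmCompDerivatives} with $m=2$, $\ell=0$: for $\gamma>2-1/p$ it is injective, with closed range of codimension two and cokernel spanned by $1$ and $x$. Since $\gamma>2-1/p=1+1/q$, both $1$ and $x$ lie in $L^q_{-\gamma}$, so the range of $\partial_{xx}$ is precisely $\{f\in L^p_\gamma:\ \langle f,1\rangle=\langle f,x\rangle=0\}$. The two bordering functions $\partial_{xx}S=S''$ and $\partial_{xx}(xS)=2S'+xS''$ are smooth and exponentially localized, since $S=\tanh$ and all its derivatives are, hence they belong to $L^p_\gamma$ for every $\gamma$, and $A$ is a bounded linear map.

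Next I would establish bijectivity directly, using the scalar products~\eqref{e:SxS}. Given $f\in L^p_\gamma$, the element $f-a\,\partial_{xx}S-b\,\partial_{xx}(xS)$ lies in the range of $\partial_{xx}$ exactly when its pairings with $1$ and $x$ vanish; by~\eqref{e:SxS} this is the linear system $\langle f,1\rangle-2b=0$, $\langle f,x\rangle+2a=0$, with the unique solution $b=\tfrac{1}{2}\langle f,1\rangle$, $a=-\tfrac{1}{2}\langle f,x\rangle$. With $a,b$ so chosen, injectivity of $\partial_{xx}$ on $M^{2,p}_{\gamma-2}$ produces a unique $\phi$ with $\partial_{xx}\phi=f-a\,\partial_{xx}S-b\,\partial_{xx}(xS)$. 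Thus $A$ is a bijection (taking $f=0$ gives injectivity, arbitrary $f$ gives surjectivity), and boundedness of $A^{-1}$ then follows from the open mapping theorem.

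As an independent check of injectivity one can bypass the cokernel description altogether: if $A(\phi,a,b)=0$ then $\partial_{xx}(\phi+aS+bxS)=0$ distributionally, so $\phi+aS+bxS=c_0+c_1x$ for constants $c_0,c_1$, i.e.\ $\phi=c_0+c_1x-aS-bxS$. Expanding $\tanh$ at $\pm\infty$ yields $\phi(x)=(c_0\mp a)+(c_1\mp b)x+\rmO(\langle x\rangle\rme^{-2|x|})$ as $x\to\pm\infty$; the weight $\langle x\rangle^{\gamma-1}$ on $\phi'$ forces $c_1\mp b=0$, and then the weight $\langle x\rangle^{\gamma-2}$ on $\phi$ forces $c_0\mp a=0$, for both signs, because for $\gamma>2-1/p$ neither a nonzero constant nor a nonzero affine function belongs to $M^{2,p}_{\gamma-2}$; hence $a=b=c_0=c_1=0$ and $\phi\equiv0$. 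I do not expect a genuine obstacle in any of this: the statement is a finite-dimensional bordering of a Fredholm operator whose kernel and cokernel are already pinned down. The only points needing a little care are the membership $1,x\in L^q_{-\gamma}$ for $\gamma>2-1/p$ (which is what lets one read ``cokernel spanned by $\bbP_2$'' as the two integral conditions above) and the non-degeneracy of the matrix $\bigl(\begin{smallmatrix}0&-2\\2&0\end{smallmatrix}\bigr)$ implicit in~\eqref{e:SxS}, which is exactly what makes $\partial_{xx}S$ and $\partial_{xx}(xS)$ genuinely complementary to the range of $\partial_{xx}$.
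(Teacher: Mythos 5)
Your proof is correct and follows essentially the same route the paper takes: the lemma is presented there as an immediate consequence of the Fredholm properties of $\partial_{xx}$ from Proposition~\ref{prop:FredholmCompDerivatives} together with the scalar products~\eqref{e:SxS}, which is exactly your bordering argument. You simply spell out the details (including the membership $1,x\in L^q_{-\gamma}$ and the explicit $2\times2$ system) that the paper leaves implicit.
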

In particular, given $g \in L^p_{\gamma}$ we can solve \eqref{e:1st} and  find $(\phi_1, a_1,b_1)\in M^{2,p}_{\gamma-2} \times \bbR^2$. Taking scalar products of \eqref{e:1st} with $1,x$ and using \eqref{e:SxS}, we also obtain $a_1= \frac{1}{2}g_1$ and $b_1=- \frac{1}{2} g_0.$

\subsection{Construction of the nonlinear map $F_b$} \label{subsec:OperatorFb}

In order to construct the map $F_b:M^{2,p}_{\sigma} \times \bbR^2 \times[0,\infty) \rightarrow M^{2,p}_{\sigma} \times \bbR^2$, we first introduce the space 
\begin{equation}\label{e:D}
\mathcal{D}=\{ u \in M^{2,p}_{\sigma} : u_x \in L^p_{\sigma+2}\},
\end{equation}
and the linear operator $\mmT_b:\mathcal{D} \times \bbR^2 \rightarrow L^p_{\sigma+2}$, 
$$
\mmT_b ( \rho, \alpha, \beta)= \partial_{xx} \rho - 2b S \partial_x \rho + \alpha\partial_{xx}S + \beta \partial_{xx}(xS)
.$$ 
A short calculation shows that inserting an Ansatz 
\[
\phi = \eps( \phi_1 + \rho), \quad a = \eps( a_1+ \alpha), \quad b= \eps(b_1+ \beta),
\]
into \eqref{eq:peikonal} gives the equation
\begin{equation*} 
 \mmT_{\eps(b_1 + \beta)}( \rho, \alpha, \beta) - \eps \tilde{N}_1( \rho,  \alpha, \beta) - \eps \bar{b} \tilde{N}_2=0,
\end{equation*}
where, $\tilde{N}_2 = 2S \partial_x \phi_1$,  and the operator $\tilde{N}_1(\rho,\alpha,\beta)$ is defined in terms of  $N$ from \eqref{eq:nonlinearities},
\begin{equation*}
 \tilde{N}_1(\rho, \alpha, \beta) = N(\phi_1+\rho, a_1+ \alpha, b_1+\beta).
\end{equation*}
Now, suppose for the moment that, for $b$  fixed, $\mmT_{ b}: \mathcal{D} \rightarrow L^2_{\sigma+2}$ is bounded invertible. We may then precondition the equation with $\mmT_b$ and solve the equivalent system
\begin{equation}\label{eq:operatorFb}
F_{b} (\rho, \alpha, \beta;\eps)= [I  -\eps \mmT_{ b}^{-1}( \tilde{N}_1+b \tilde{N}_2)](\rho,\alpha,\beta)=0.
\end{equation}
In particular, if $F_{\eps(b_1+\beta)}:M^{2,2}_{\sigma} \times \bbR^2\times[0,\infty) \rightarrow M^{2,2}_{\sigma}\times \bbR^2$ meets the conditions of the Implicit Function Theorem, we can  conclude the existence of solutions to \eqref{eq:peikonal}. We therefore will show that the operators
\begin{enumerate}
\item  $\mmT_b^{-1}\tilde{N}_1: M^{2,p}_{\sigma} \times \bbR^2 \rightarrow M^{2,p}_{\sigma} \times \bbR^2$, and
\item  $b\mmT_b^{-1}\tilde{N}_2:M^{2,p}_{\sigma} \times \bbR^2 \rightarrow M^{2,p}_{\sigma} \times \bbR^2$ 
\end{enumerate}
are  continuously differentiable with respect to  $b$.  We start proving that  $\mmT_b:\mathcal{D} \rightarrow L^p_{\sigma+2}$ is invertible, next.

\subsection{Invertibility of $\mmT^{-1}_b:L^p_{\gamma} \rightarrow M^{2,p}_{\gamma-2} \times \bbR^2$ for $\gamma>2-1/p$}\label{subsec:prelim}
Consider $\mmL_b : \mathcal{D}\rightarrow L^p_{\gamma},$ defined through 
\begin{equation}\label{e:lb}
\mmL_b \rho = \partial_{xx} \rho - 2b S \partial_x \rho.
\end{equation}
We will see that $\mmL_b$ is Fredholm of index $-2$, and $\partial_{xx} S,\partial_{xx}(xS)$ span its cokernel.  Throughout, we let $\Rg(\mmL_b)$ and $\Im^\perp(\mmL_b)$ denote range and cokernel of $\mmL_b$, and let $P: L^p_{\gamma} \rightarrow \Rg(\mmL_b)$ be a  projection onto its range. Since we will be using linear Lyapunov-Schmidt reduction, it is useful to have an explicit definition of  $P$. Notice that $\Im^\perp(\mmL_b) \subset L^q_{-\gamma}$ is spanned by
\begin{equation}\label{e:cok}
\psi^*_1(x) = \cosh(x)^{-2b},\qquad   \psi_2^*(x)= \int_0^x \left( \dfrac{\cosh(x)}{\cosh(y)}\right)^{-2b} \rmd y,
\end{equation}
where $(\partial_x +2b S(x)) \psi_1^* (x)= 0, \quad ( \partial_x +2b S(x) ) \psi_2^*(x) = 1.$ Using brackets $\langle \cdot, \cdot \rangle$ to express the relation between $L^p_{\gamma}$ and its dual $L^q_{-\gamma}$,  we pick $\psi_1, \psi_2$ such that $ \langle \psi_i, \psi^*_j\rangle =\delta_{ij}$, $ i,j=1,2$ and find
\[ P u = u-  \langle u , \psi_1^* \rangle \psi_1 - \langle u, \psi_2^* \rangle \psi_2. \]

Notice as well that the functions $\psi_1^*(x)$ and $\psi_2^*(x)$ are in $L^q_{-\gamma}$ only for $b\geq 0$ --- we will not be able to extend this analysis to $b<0$. We are now ready to establish the Fredholm properties and bounds on inverses. 

\begin{lem}\label{lem:FredholmLb}
Let $p\in (1,\infty)$ and $\gamma > 1-1/p$. Then, the operator $\mmL_b: \mathcal{D} \rightarrow L^p_{\gamma}$,  defined in (\ref{e:lb}) is Fredholm index $-2$, its co-kernel is given in (\ref{e:cok}), and the solution to $\mmL_b u = f$ satisfies bounds  $ \|u \|_{\mathcal{D}} \leq \dfrac{C}{b} \|f\|_{L^p_{\gamma}}$, with $C$ independent of $b$ and $f\in\Rg(\mmL_b)$.
\end{lem}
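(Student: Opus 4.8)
The plan is to treat $\mmL_b \rho = \partial_{xx}\rho - 2bS\partial_x\rho$ as a perturbation, in the relevant scale of weighted spaces, of the model operator whose Fredholm theory is already in hand. First I would factor $\mmL_b$ as a first-order system: writing $u_x = v$, the equation $\mmL_b u = f$ becomes $v_x - 2bSv = f$, i.e. $(\partial_x - 2bS)v = f$ with $\psi_1^\ast,\psi_2^\ast$ from \eqref{e:cok} spanning the (formal) cokernel of $\partial_x - 2bS$ acting between the appropriate weighted $L^p$ spaces. The content of Proposition \ref{prop:FredholmCompDerivatives} (with $m=1$, and the integrating-factor structure) is exactly that $\partial_x - 2bS$, which for large $|x|$ looks like $\partial_x \mp 2b$, is Fredholm on $L^p_{\gamma+2}\to L^p_{\gamma+1}$-type spaces once $\gamma+1-1/p\notin\{\dots\}$; here the exponential decay of $S'$ lets one compare with the constant-coefficient operator $\partial_x - 2b\sgn(x)$ by a relatively compact perturbation argument as in the proof of Proposition \ref{prop:(1-dxx)}. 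Recovering $u$ from $v$ then costs one more integration, $u(x)=\int_0^x v$, which is responsible for the index dropping by a further $2$ and for $\partial_{xx}S, \partial_{xx}(xS)$ (the images under $\partial_{xx}$ of the two undetermined constants of integration) spanning $\Im^\perp(\mmL_b)$. One checks $\Im^\perp(\mmL_b)$ is spanned by $\psi_1^\ast,\psi_2^\ast$ directly: these are precisely the solutions of the formal adjoint equation $(\partial_x + 2bS)\psi^\ast = 0$ and $(\partial_x + 2bS)\psi^\ast = 1$, and for $b>0$ they lie in $L^q_{-\gamma}$ because $\cosh(x)^{-2b}\sim \rme^{-2b|x|}$ decays and $\psi_2^\ast$ grows only linearly, which is integrable against $\langle x\rangle^{-\gamma}$ once $\gamma>1-1/p$.

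The novel point, and the one I expect to be the main obstacle, is the quantitative bound $\|u\|_{\mathcal D}\le (C/b)\|f\|_{L^p_\gamma}$ with $C$ uniform in $b$ — the $1/b$ blow-up as $b\to 0^+$ is both expected (at $b=0$ the operator is just $\partial_{xx}$, which on these spaces has a larger cokernel, so $\mmL_b^{-1}$ must degenerate) and must be tracked with care because the construction of $F_b$ and the smoothness-in-$b$ arguments downstream rely on the precise rate. I would get it from the integrating-factor representation: on $\Rg(\mmL_b)$ one has the explicit formula $v(x) = \big(\cosh x\big)^{2b}\!\int_{x_0}^{x}\big(\cosh y\big)^{-2b} f(y)\,\rmd y$ with $x_0$ chosen (using $f\perp\psi_1^\ast$) so that $v$ does not pick up the growing homogeneous solution, and then $u(x)=\int_0^x v$ with the remaining constant fixed by $f\perp\psi_2^\ast$. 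Estimating the convolution-type kernel $K_b(x,y) = (\cosh x/\cosh y)^{2b}$ for $y$ between $0$ and $x$: it is bounded by $\rme^{-2b|x-y|}$, so $v$ is controlled by convolution of $|f|$ against $\rme^{-2b|\cdot|}$, whose $L^1$ norm is $1/b$ — this is where the $1/b$ enters, and Young's inequality in the weighted spaces (moving the polynomial weight across the exponentially-localized-off-diagonal kernel, as $\langle x\rangle/\langle y\rangle \le \langle x-y\rangle$) gives $\|v\|_{L^p_{\sigma+2}}\le (C/b)\|f\|_{L^p_{\sigma+2}}$. The bound on $\|u\|_{M^{2,p}_\sigma}$ follows since $u_{xx} = \mmL_b u + 2bS u_x = f + 2bS v$ and $\|2bS v\|_{L^p_{\sigma+2}}\le Cb\cdot(C/b)\|f\| = C\|f\|$, while the lower-order pieces $u, u_x$ in the $M^{2,p}_\sigma$ norm are handled by the explicit integral formulas plus the decay of $f$ and Hardy-type inequalities.

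A couple of technical caveats I would address along the way. One must check that $v$ as constructed actually lands in $L^p_{\sigma+2}$ (equivalently $u_x\in L^p_{\sigma+2}$, so that $u\in\mathcal D$ as defined in \eqref{e:D}), not merely in some weaker space; this is where the extra localization of $f$ (coming eventually from H3, $\sigma>2$) is used, since near infinity $v$ behaves like a primitive of $f$ and one needs $f$ decaying fast enough that this primitive still decays. One also has to verify the transversality/normalization $\langle\psi_i,\psi_j^\ast\rangle = \delta_{ij}$ is achievable, i.e. that the $\psi_i^\ast$ are genuinely independent functionals on $L^p_\gamma$ — immediate since one is bounded and even while the other is unbounded and odd — and that the resulting projection $P$ is bounded, which is automatic once the $\psi_i^\ast\in L^q_{-\gamma}$. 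Finally, Fredholmness itself: I would phrase it as "$\mmL_b$ has closed range of finite codimension equal to $2$, and trivial kernel," the kernel triviality being clear because the two homogeneous solutions of $\mmL_b u = 0$ are $u\equiv\mathrm{const}$ and $u = \int_0^x(\cosh y)^{2b}\rmd y$, neither of which lies in $M^{2,p}_\sigma$ for $\sigma>0$ (the first fails the weight at infinity, the second grows exponentially), so $\ker(\mmL_b|_{\mathcal D}) = \{0\}$ and index $= -2$.
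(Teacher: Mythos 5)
Your proposal follows essentially the same route as the paper's proof: the factorization $\mmL_b=(\partial_x-2bS)\partial_x$, the explicit integrating-factor solution formula with kernel bounded by $\rme^{-2b|x-y|}$, Young's inequality supplying the $1/b$, and recovery of $u$ from $u_x$ via the Fredholm properties of $\partial_x$ in Kondratiev spaces (Proposition \ref{prop:FredholmCompDerivatives}). One small correction: for $b>0$ the function $\psi_2^*$ does not grow linearly but is bounded, converging to $\pm 1/(2b)$ as $x\to\pm\infty$; it is this boundedness (linear growth would instead require $\gamma>2-1/p$) that places it in $L^q_{-\gamma}$ for every $\gamma>1-1/p$.
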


\begin{proof} 
Since solutions to the ODE $\mmL_b \phi = 0$ are either constant or exponentially growing at infinity, $\mmL_b: \mathcal{D}  \rightarrow L^p_{\gamma}$ has trivial kernel for $\gamma>0$. We therefore only need to show that the range of $\mmL_b$ is  closed to conclude that it is a Fredholm operator. We therefore examine the explicit solution formula
\[ u(x) = 
\left \{ \begin{array}{c c c}
\int_{-\infty}^x \int_{-\infty}^y f(z)\left( \frac{\cosh(z)}{\cos(y)}\right)^{-2b} \rmd z \rmd y & \text{if}& x<0 \\
\int_{\infty}^x \int_{\infty}^y f(z)\left( \frac{\cosh(z)}{\cos(y)}\right)^{-2b} \rmd z \rmd y & \text{if}& x\geq0.
\end{array}\right.
\]
A direct calculation shows that the conditions $\langle f , \psi^*_i\rangle =0$, $i=1,2$, guarantee continuity of $u$ and $u_x$ at $x=0$. It remains to show that $u \in \mathcal{D}$. 
Therefore, we factor 
\[
\mmL_b u = (\partial_x - 2bS) \partial_x u = f
,\]
and show that  $(\partial_x - 2bS)^{-1}: \Rg(\mmL_b)\subset L^p_{\gamma} \rightarrow L^p_{\gamma}$ is bounded. Subsequently solving the Fredholm equation (Proposition \ref{prop:FredholmCompDerivatives}) $\partial_x u = ( \partial_x -2bS)^{-1} f$ gives a solution with $u_x,u_{xx}\in L^p_{\gamma}$
and, since $u \in L^p_{\gamma-1}\subset L^p_{\gamma-2}$, $u\in\mathcal{D}$.

Next, we establish uniform bounds $\| u\|_{\mathcal{D}} \leq \dfrac{C}{b} \| f\|_{L^p_{\gamma}}$.
For $x \geq 0$,
\[
|u_x(x)| = \left|\int^{\infty}_x f(y) \left( \frac{\cosh(y)}{\cosh(x)} \right)^{-2b} \rmd y\right|\leq \frac{1}{2}\int^{\infty}_x |f(y)|\rme^{-2b(y-x)}\rmd y,\]
which gives, using $\langle x\rangle^{\gamma} \langle y \rangle^{-\gamma} \leq \langle x-y \rangle^{|\gamma|}$ and  Young's inequality,
\begin{equation*}
\|u_x\|_{L^p_{\gamma}[0,\infty)} \leq  \frac{C}{b} \| f\|_{L^p_{\gamma}}.
\end{equation*}
A similar analysis in the case of $x<0$ shows the bound $\|u_x\|_{L^p_{\gamma}(-\infty,0]} \leq \frac{C}{b} \|f\|_{L^p_{\gamma}}$, and the lemma follows from Proposition \ref{prop:FredholmCompDerivatives}.
\end{proof}

\begin{lem}\label{lem:boundedInverseLb}
The operator $\mmL_b^{-1} : \Rg(\mmL_b) \subset L^p_{\gamma} \rightarrow M^{2,p}_{\gamma-2}$, is uniformly bounded in $b\geq0$ provided $\gamma >2- 1/p$. Explicitly, we have 
\[
\|u\|_{M^{2,p}_{\gamma-2}} \leq C \| f\|_{L^p_{\gamma}},
\]
for all $f\in \Rg(\mmL_b)$ and all solutions $\mmL_b u =f$.
\end{lem}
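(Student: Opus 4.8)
The plan is to bootstrap from Lemma~\ref{lem:FredholmLb}, which already gives the estimate $\|u\|_{\mathcal{D}}\leq \frac{C}{b}\|f\|_{L^p_\gamma}$ with $C$ independent of $b$. The factor $1/b$ blows up as $b\to 0^+$, so that estimate alone is useless near $b=0$; on the other hand, at $b=0$ the operator $\mmL_0=\partial_{xx}$ together with the cokernel correction is exactly the invertible operator $A$ of Lemma~\ref{lem:InvertibleLaplace} (restricted appropriately), which gives $\|u\|_{M^{2,p}_{\gamma-2}}\leq C\|f\|_{L^p_\gamma}$ for $\gamma>2-1/p$. So the real content is to patch these two regimes together into a bound uniform on all of $b\geq 0$, and crucially to note that the target norm is the \emph{weaker} $M^{2,p}_{\gamma-2}$ norm, not the full $\mathcal{D}$ norm (which controls $u_x$ in $L^p_{\gamma+2}$ — a gain of two weights — and genuinely does degenerate as $b\to 0$, since at $b=0$ the antiderivative of an $L^p_\gamma$ function need not lie in $L^p_{\gamma+2}$).

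The key steps, in order. First, I would fix a small $b_0>0$ and observe that for $b\geq b_0$ the claim is immediate from Lemma~\ref{lem:FredholmLb} with constant $C/b_0$, together with the continuous embedding $\mathcal{D}\hookrightarrow M^{2,p}_{\gamma-2}$. So it suffices to handle $0\leq b\leq b_0$. Second, for that regime write $\mmL_b = \partial_{xx} - 2bS\partial_x = \mmL_0 - 2bS\partial_x$ and treat the perturbation. Here one must be careful: $S\partial_x u$ costs one derivative, and multiplication by $S=\tanh(x)$ does not improve decay, so $2bS\partial_x u \in L^p_{\gamma-1}$ only, not $L^p_\gamma$ — this is exactly why the naive Neumann-series perturbation of $\mmL_0^{-1}$ does not close. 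The honest route is to use the factorization already exploited in the previous proof, $\mmL_b u = (\partial_x - 2bS)\partial_x u = f$, and to track $b$-dependence through the two solve steps: from Proposition~\ref{prop:FredholmCompDerivatives} (or a direct computation with the explicit kernel $(\cosh z/\cosh x)^{-2b}$) one solves $(\partial_x-2bS)v = f$ with $v=\partial_x u \in L^p_\gamma$, then solves $\partial_x u = v$ via the isomorphism-type statement in Proposition~\ref{prop:FredholmCompDerivatives} on the appropriate Kondratiev/weighted scale, obtaining $u\in M^{2,p}_{\gamma-2}$ with the loss of one weight absorbed by the Kondratiev gain. The point is that the operator $(\partial_x-2bS)^{-1}$, restricted to $\Rg(\mmL_b)$ and mapping into $L^p_\gamma$ (rather than gaining weight), is bounded \emph{uniformly} down to $b=0$ — at $b=0$ it is just antidifferentiation against the codimension conditions, which is bounded $L^p_\gamma\cap(\text{range})\to L^p_\gamma$ — and its norm depends continuously on $b$.

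Third, and this is where I expect the main obstacle, one has to verify that the two codimension-two conditions defining $\Rg(\mmL_b)$ — namely $\langle f,\psi_1^*\rangle = \langle f,\psi_2^*\rangle = 0$ with $\psi_i^*$ from \eqref{e:cok} — vary continuously (indeed $C^1$) in $b$ down to $b=0$, and that $\psi_1^*=\cosh(x)^{-2b}\to 1$ and $\psi_2^*\to x$ as $b\to 0^+$, recovering exactly the conditions $\langle f,1\rangle=\langle f,x\rangle=0$ that cut out $\Rg(\partial_{xx})$ in Lemma~\ref{lem:InvertibleLaplace}. One must check the functionals $\psi_i^*$ stay bounded in $L^q_{-\gamma}$ uniformly for $b\in[0,b_0]$ — for $\psi_1^*$ this is clear, for $\psi_2^*$ one needs $\gamma>1-1/p$ so that the linearly growing antiderivative pairs against $L^p_\gamma$, which is assumed. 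Given that, the map $b\mapsto \mmL_b^{-1}$ (as an operator from the $b$-dependent range into $M^{2,p}_{\gamma-2}$, or after composing with the $b$-dependent projection $P$ onto a fixed complement) is a norm-continuous family on the compact interval $[0,b_0]$, hence uniformly bounded there; combined with the first step this gives the uniform bound on $[0,\infty)$. The remaining bookkeeping — that $u\in L^p_{\gamma-1}\subset L^p_{\gamma-2}$ so the lower-order part of the $M^{2,p}_{\gamma-2}$ norm is controlled, exactly as at the end of the previous proof — is routine.
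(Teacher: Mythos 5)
There is a genuine gap at the heart of your second step: the claim that $(\partial_x-2bS)^{-1}$, restricted to the range, is bounded from $L^p_\gamma$ into $L^p_\gamma$ uniformly down to $b=0$ because ``at $b=0$ it is just antidifferentiation against the codimension conditions.'' Antidifferentiation does \emph{not} preserve algebraic weight: if $f\in L^p_\gamma$ with $\int f=0$, then $v(x)=-\int_x^\infty f$ generically lies only in $L^p_{\gamma-1}$ (take $f\sim \langle x\rangle^{-s}$ with $\gamma+1/p<s\leq \gamma+1+1/p$). This is precisely what Proposition \ref{prop:FredholmCompDerivatives} encodes: the preimage under $\partial_x:M^{1,p}_{\gamma-1}\to L^p_\gamma$ lands one weight down. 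The only bound into $L^p_\gamma$ itself is the $C/b$ bound of Lemma \ref{lem:FredholmLb}, which blows up as $b\to0$. The paper's proof threads exactly this needle with two devices you are missing. First, it proves the uniform-in-$b$ estimate $\|v\|_{L^p_{\gamma-1}}\leq C\|f\|_{L^p_\gamma}$ (accepting the loss of one weight) via the substitution $x=\rme^\tau$, which converts $(\partial_x-2bS)v=f$ into $w_\tau-M(\tau)w=g$ with $M(\tau)=\bar\gamma+2bS\rme^{\tau}\geq\bar\gamma>0$ for every $b\geq0$, so the Green's kernel is dominated by $\rme^{-\bar\gamma(s-\tau)}$ independently of $b$. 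Second, it recovers top-weight control of the derivative from the identity $v_x=f+2bSv$, where the explicit factor $b$ cancels the $C/b$ of Lemma \ref{lem:FredholmLb}. Only then does $\partial_x^{-1}:M^{1,p}_{\gamma-1}\to M^{2,p}_{\gamma-2}$ absorb the lost weight, as you anticipate.

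Your fallback — norm continuity of $b\mapsto\mmL_b^{-1}$ on a compact interval $[0,b_0]$ — is asserted rather than proved, and it is not a cheaper route: in the paper, Lipschitz continuity of $\mmT_b^{-1}$ in $b$ (Lemma \ref{lem:contTb}) is \emph{derived from} the uniform bounds of the present lemma and costs yet another degree of localization ($\gamma-3$ instead of $\gamma-2$). Deducing uniform boundedness from continuity would therefore be circular here, and the $b$-dependence of the domain $\Rg(\mmL_b)$ makes a soft compactness argument delicate in any case. Your splitting into $b\geq b_0$ and $0\leq b\leq b_0$, and your observation that $\psi_1^*\to1$, $\psi_2^*\to x$ as $b\to0^+$, are correct but do not touch the main difficulty.
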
 
\begin{proof}
We use the fact that the operator  $\partial_x^{-1}: M^{1,p}_{\gamma-1} \rightarrow M^{2,p}_{\gamma-2} $ is bounded for $\gamma>2-1/p$, with bound depending only on the weight $\gamma$, and write $\mmL_b u=(\partial_x - 2bS) \partial_x u$. The result then follows, once we show that the operator $( \partial_x -2b S )^{-1}: L^p_{\gamma} \rightarrow M^{1,p}_{\gamma-1}$ is uniformly bounded in $b$. Explicitly, we need to show that solutions to  $ (\partial_x -2b S )v= f$ satisfy
\begin{equation}\label{bound1}
\| v\|_{L^p_{\gamma-1}} \leq C \| f\|_{L^p_{\gamma}}.
\end{equation}
We establish this estimate for $x>0$, the other case being analogous. Set 
\[ 
x = e^{\tau} \quad \tau \in\R,\quad w = e^{\bar{\gamma} \tau} v(e^{\tau}), \quad g = e^{( \bar{\gamma} +1)  \tau} f,
\]
which gives 
\[
w_{\tau} -M(\tau) w = g,\qquad M(\tau) = \bar{\gamma}+ 2b S e^{\tau}>\bar{\gamma}.
\]
We find $w$ as
\[ 
w(\tau) = -\int^{\infty}_{\tau} g(s) e^{- \int^s_{\tau}M(\sigma) \rmd \sigma } \rmd s.
\]
Since  $- \int^s_{\tau}M(\sigma) \rmd \sigma \leq - \bar{\gamma} (s - \tau)$, we obtain, using again Young's inequality,
\begin{equation*} \| w (\tau)\|_{L^p} \leq  \left( \int_\R \left[ \int^{\infty}_{\tau}  |g(s) | e^{-\bar{\gamma} (s-\tau) } \rmd s\right]^p \rm dx  \right)^{1/p} \leq  \bar{\gamma}^{-1}\| g(\tau)\|_{L^p}.
\end{equation*}
Setting $\gamma-1 = \bar{\gamma} - \frac{1}{p} $ we find in the original variables
\[
\| v \|_{L^p_{\gamma-1}[0,\infty)} \leq C \| f\|_{L^p_{\gamma}[0, \infty)},
\]
which proves \eqref{bound1}.

Finally, since we can write $v_x = f + 2bS v$, and since we have the bound $ \| v\|_{L^p_{\gamma}} \leq \frac{C}{b} \|f \|_{L^p_{\gamma}}$ from Lemma \ref{lem:FredholmLb}, we are able to conclude that
\[ \| v_x\|_{L^p_{\gamma}} \leq \|f\|_{L^p_{\gamma}} + 2 b \| v \|_{L^p_{\gamma}} \leq C \|f\|_{L^p_{\gamma}}.\]
Consequently,
\[ \|v\|_{M^{2,p}_{\gamma-1}} \leq C \|f\|_{L^p_{\gamma}}.\]
where $C$ is a generic constant, independent of $b$, that can change from line to line. 
\end{proof}

We are now ready to show the invertibility of $\mmT_b:\mathcal{D} \times \bbR^2 \rightarrow L^p_{\gamma}$ with uniform bounds.
\begin{lem}\label{lem:InvertibleTb}
For $p \in (1, \infty)$ and $b\geq 0$, small,   $\mmT_b: \mathcal{D} \times \bbR^2 \rightarrow L^p_{\gamma}$, defined through, 
\[
\mmT_b(\rho, \alpha, \beta)= \mmL_b \rho+ \alpha \partial_{xx}S + \beta \partial_{xx}(xS),
\]
is invertible. Furthermore, solutions $(\rho, \alpha, \beta)$ to $\mmT_b(\rho, \alpha, \beta) = f$ satisfy, 
\[
\| (\rho, \alpha, \beta) \|_{\mathcal{D} \times \bbR^2} \leq \frac{C}{b} \| f\|_{L^p_{\gamma}}, 
\quad  
\| (\rho, \alpha, \beta) \|_{M^{2,p}_{\gamma-2}\times \bbR^2} \leq C \| f\|_{L^p_{\gamma}},
\]
for $\gamma>1-1/p$ and $\gamma >2-1/p$, respectively, with constant $C$ independent of $b$.
\end{lem}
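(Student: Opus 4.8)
The plan is to regard $\mmT_b$ as a two--dimensional bordered extension of $\mmL_b$ and to reduce its invertibility, together with the bounds, to invertibility of a fixed $2\times 2$ matrix. Write $v_1=\partial_{xx}S$ and $v_2=\partial_{xx}(xS)$, so that $\mmT_b(\rho,\alpha,\beta)=\mmL_b\rho+\alpha v_1+\beta v_2$. By Lemma~\ref{lem:FredholmLb}, $\mmL_b:\mathcal{D}\to L^p_\gamma$ is Fredholm of index $-2$ with trivial kernel and cokernel spanned by $\psi_1^*,\psi_2^*$ from \eqref{e:cok}; since its range is closed, of codimension two, and annihilated by $\psi_1^*,\psi_2^*$, one has $\Rg(\mmL_b)=\{f\in L^p_\gamma:\langle f,\psi_1^*\rangle=\langle f,\psi_2^*\rangle=0\}$. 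Hence solving $\mmT_b(\rho,\alpha,\beta)=f$ amounts to first choosing $(\alpha,\beta)$ so that $f-\alpha v_1-\beta v_2\in\Rg(\mmL_b)$ --- i.e.\ to solving the $2\times2$ linear system $\langle\alpha v_1+\beta v_2,\psi_j^*\rangle=\langle f,\psi_j^*\rangle$, $j=1,2$ --- and then setting $\rho=\mmL_b^{-1}(f-\alpha v_1-\beta v_2)$.

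The key point is that the matrix $N(b):=(\langle v_i,\psi_j^*\rangle)_{i,j}$ is invertible, uniformly for $b$ in a small interval $[0,b_0]$. Here one uses that $v_1,v_2$ are exponentially localized while $0<\psi_1^*(x)=\cosh(x)^{-2b}\le1$ and $|\psi_2^*(x)|\le|x|$ for every $b\ge0$ (the latter because $\cosh x\ge\cosh y$ for $|y|\le|x|$): thus the entries of $N(b)$ are finite and, by dominated convergence, continuous in $b\ge0$, with $\psi_1^*\to1$ and $\psi_2^*\to x$ as $b\to0^+$. Combined with the scalar products \eqref{e:SxS}, this gives $\det N(0)=4\ne0$, so $N(b)$ is invertible with $\|N(b)^{-1}\|$ bounded on $[0,b_0]$.

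Granting this, the conclusions follow in the usual way. For injectivity, pairing $\mmT_b(\rho,\alpha,\beta)=0$ with $\psi_1^*,\psi_2^*$ kills the $\mmL_b\rho$ term and forces $N(b)(\alpha,\beta)^{\mathsf T}=0$, hence $\alpha=\beta=0$, and then $\rho=0$ since $\ker\mmL_b=\{0\}$. For surjectivity and the estimates, given $f\in L^p_\gamma$ solve the $2\times2$ system for $(\alpha,\beta)$; since $\psi_1^*\in L^q_{-\gamma}$ for $\gamma>1-1/p$ and $\psi_2^*\in L^q_{-\gamma}$ for $\gamma>2-1/p$, with norms whose $b$-dependence is controlled by the pointwise bounds above, one bounds $|(\alpha,\beta)|$ and hence $\|\tilde f\|_{L^p_\gamma}$, where $\tilde f:=f-\alpha v_1-\beta v_2\in\Rg(\mmL_b)$; setting $\rho=\mmL_b^{-1}\tilde f$, Lemma~\ref{lem:FredholmLb} yields $\|\rho\|_{\mathcal{D}}\le\frac{C}{b}\|\tilde f\|_{L^p_\gamma}$ and Lemma~\ref{lem:boundedInverseLb} yields $\|\rho\|_{M^{2,p}_{\gamma-2}}\le C\|\tilde f\|_{L^p_\gamma}$, and adding the bound on $(\alpha,\beta)$ produces the two claimed inequalities.

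The step I expect to require the most care is the uniformity as $b\to0^+$: one must check both that $N(b)$ stays invertible with uniformly bounded inverse --- the continuity/nondegeneracy argument above, which crucially exploits the exponential localization of $v_1,v_2$ together with the uniform pointwise bounds on $\psi_1^*,\psi_2^*$ --- and that the weight is chosen so that the cokernel functionals $\langle\cdot,\psi_j^*\rangle$ act on $L^p_\gamma$ with precisely the $b$-dependence needed to reproduce the $\tfrac{C}{b}$, respectively $O(1)$, constants. The remainder is the routine bordered-operator bookkeeping.
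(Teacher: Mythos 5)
Your proposal is correct and follows essentially the same route as the paper: a bordered/Lyapunov--Schmidt reduction using the cokernel functionals $\psi_1^*,\psi_2^*$ to determine $(\alpha,\beta)$ from a $2\times2$ system, followed by $\rho=\mmL_b^{-1}(f-\alpha\partial_{xx}S-\beta\partial_{xx}(xS))$ and the bounds from Lemmas~\ref{lem:FredholmLb} and~\ref{lem:boundedInverseLb}. Your explicit verification that the matrix $(\langle v_i,\psi_j^*\rangle)$ is uniformly invertible as $b\to0^+$ (via $\det N(0)=4$ and dominated convergence) supplies a detail the paper only asserts.
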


\begin{proof}
From Lemma \ref{lem:FredholmLb} we know that $\mmL_b: \mathcal{D} \rightarrow L^p_{\gamma}$ is a Fredholm operator with index $i=-2$ and cokernel spanned by $\psi^*_1(x) = \cosh(x)^{-2b}$ and $\psi^*_2(x)= \int_0^x \left( \dfrac{\cosh(x)}{\cosh(y)} \right)^{-2b} \rmd y $. We will use this information together with the Bordering Lemma for Fredholm operators to show that $\mmT_b$ is invertible.

Let $R: \bbR^2 \rightarrow (\Im(\mmL_b))^{\perp}$ be defined as $R(\alpha, \beta) =\alpha \partial_{xx}S + \beta \partial_{xx}(xS)$ and write $$\mmT_b(\rho, \alpha,\beta)= \mmL_b \rho + R(\alpha, \beta).$$
Since $\partial_{xx}S$ and $\partial_{xx}(xS)$ are exponentially localized functions, the operator $R$ is well defined and bounded. Since scalar products of $\partial_{xx}S$ and $\partial_{xx}(xS)$ with $\psi_1^*$ and $\psi_2^*$ form an invertible $2\times 2$ matrix, the range of $R$ forms a complement to $\Rg(\mmL_b)$, and $\mmT_b:\mathcal{D} \rightarrow L^p_{\gamma}$ is invertible. 

To obtain the desired bounds on $\rho$, decompose $\mmT_b(\rho, \alpha, \beta) =f$ into
\begin{align*} 
P [ \mmL_b \rho + \alpha \partial_{xx}S + \beta \partial_{xx}(xS)] &= P f,\\
(1-P)[ \alpha \partial_{xx}S + \beta \partial_{xx}(xS)] & = ( 1-P) f.
\end{align*}
From the second expression we obtain bounds of the form, $$ | \alpha | \leq C \| f\|_{L^p_{\gamma}}, \quad |\beta|\leq C \|f\|_{L^p_{\gamma}}.$$ Then, the first equation and the results from Lemmas \ref{lem:FredholmLb} and  \ref{lem:boundedInverseLb} show that
\[ \| \rho \|_D \leq \frac{C}{b} \|\tilde{f} \|_{L^p_{\gamma}} \quad \text{and}  \quad \|\rho \|_{M^{2,p}_{\gamma-2}} \leq C \| \tilde{f}\|_{L^p_{\gamma}},\]
where $\tilde{f} = f - \alpha \partial_{xx}S- \beta \partial_{xx}(xS)$. Finally, the desired bounds on the solution $(\rho, \alpha, \beta)$ follow from applying the triangle inequality to $\tilde{f}$, the bounds on $|\alpha|$ and $|\beta|$, and the fact that the functions $\partial_{xx}S, \partial_{xx}(xS)$ are exponentially localized. 
\end{proof}
The above lemma shows that the operator $\mmT_b^{-1} : L^p_{\gamma} \rightarrow M^{2,p}_{\gamma-2} \times \bbR^2$ is bounded  linear and we have the following corollary to Lemma \ref{lem:InvertibleTb}. 
\begin{cor}\label{cor:smoothTb}
Let $b \geq 0$, $\gamma>2-1/p$, and $p \in (1, \infty)$. Then $\mmT_b^{-1}:L^p_{\gamma} \rightarrow M^{2,p}_{\gamma-2} \times \bbR^2$ is linear, uniformly bounded in $b$. 
\end{cor}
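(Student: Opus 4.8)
The plan is to read off all three assertions — linearity, boundedness, and uniformity in $b$ — directly from Lemma \ref{lem:InvertibleTb}, which already does all the genuine work. Since $\mmT_b : \mathcal{D}\times\bbR^2 \to L^p_\gamma$ is a linear operator, its inverse $\mmT_b^{-1}$ is automatically linear; so the only content is the quantitative bound. First I would note that, by Lemma \ref{lem:InvertibleTb}, for every $f \in L^p_\gamma$ the unique solution $(\rho,\alpha,\beta) = \mmT_b^{-1} f$ of $\mmT_b(\rho,\alpha,\beta) = f$ satisfies
\[
\|(\rho,\alpha,\beta)\|_{M^{2,p}_{\gamma-2}\times\bbR^2} \leq C\,\|f\|_{L^p_\gamma},
\]
with $C$ independent of $b$, precisely because $\gamma > 2 - 1/p$. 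This is exactly the statement that $\mmT_b^{-1} : L^p_\gamma \to M^{2,p}_{\gamma-2}\times\bbR^2$ is bounded with operator norm $\leq C$ uniformly for $b \geq 0$ in the relevant (small) range. Hence there is nothing to prove beyond quoting the lemma and observing that the first (blow-up) bound $\|(\rho,\alpha,\beta)\|_{\mathcal{D}\times\bbR^2} \leq (C/b)\|f\|_{L^p_\gamma}$ is not needed here: the target space has been relaxed from $\mathcal{D}\times\bbR^2$ to $M^{2,p}_{\gamma-2}\times\bbR^2$, on which the estimate is $b$-uniform.

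If a little more care is wanted, I would spell out the two ingredients feeding the uniform bound: (i) the embedding-type estimate $\|v\|_{L^p_{\gamma-1}} \leq C\|(\partial_x - 2bS)v\|_{L^p_\gamma}$ from the proof of Lemma \ref{lem:boundedInverseLb}, whose constant depends only on $\bar\gamma$ (equivalently on $\gamma$) and not on $b$; and (ii) the algebraic cancellation $v_x = f + 2bSv$ which, combined with the $C/b$ bound on $\|v\|_{L^p_\gamma}$ from Lemma \ref{lem:FredholmLb}, trades the singular $1/b$ factor against the explicit factor $b$ to leave a $b$-uniform bound on $\|v_x\|_{L^p_\gamma}$. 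Together with the bordering step — the $2\times2$ matrix of pairings $\langle \partial_{xx}S,\psi_i^*\rangle$, $\langle \partial_{xx}(xS),\psi_i^*\rangle$ is invertible with inverse bounded uniformly in $b$ for $b$ small, by \eqref{e:SxS} and continuity of the $\psi_i^*$ in $b$ — this yields the uniform bounds on $\alpha,\beta$ and hence on the full triple.

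I do not anticipate a real obstacle: the corollary is a bookkeeping restatement of Lemma \ref{lem:InvertibleTb} in the language of bounded linear operators on the fixed space $M^{2,p}_{\gamma-2}\times\bbR^2$, recording the fact that, although $\|\mmT_b^{-1}\|_{L^p_\gamma \to \mathcal{D}\times\bbR^2}$ degenerates like $1/b$ as $b \downarrow 0$, the weaker norm $\|\mmT_b^{-1}\|_{L^p_\gamma \to M^{2,p}_{\gamma-2}\times\bbR^2}$ stays bounded. The only point worth flagging is that this is exactly the norm that will matter when $\mmT_b^{-1}$ is composed with the nonlinear maps $\tilde N_1$ and $b\tilde N_2$ in Section \ref{subsec:OperatorFb} — the factor $b$ in front of $\tilde N_2$ is what ultimately absorbs the residual singularity — so the corollary is stated in precisely the form needed downstream.
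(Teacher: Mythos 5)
Your proposal is correct and matches the paper exactly: the corollary is stated as an immediate consequence of Lemma \ref{lem:InvertibleTb}, whose second estimate $\|(\rho,\alpha,\beta)\|_{M^{2,p}_{\gamma-2}\times\bbR^2}\leq C\|f\|_{L^p_\gamma}$ (valid for $\gamma>2-1/p$ with $C$ independent of $b$) is precisely the claimed uniform boundedness, linearity being automatic for the inverse of a linear operator. Your additional unpacking of the $1/b$-versus-$b$ cancellation and the bordering step is accurate but not needed beyond quoting the lemma.
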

Roughly speaking, we have shown that we can achieve $b$-uniform bounds by giving away two degrees of localization. In the following, we show that giving away one or two more degrees of localization, we may even obtain continuity and differentiability in $b$. Eventually, we will compensate for the loss of localization by exploiting the fact that the nonlinerity gains localization.

\subsection{Differentiability of $\mmT_b^{-1}: L^p_{\gamma} \rightarrow M^{2,p}_{\gamma-4}\times \bbR^2$ for $\gamma>4-1/p$}\label{subsec:ContinuousTb}
We start by establishing continuity with respect to $b$.
\begin{lem}\label{lem:contTb}
Let $\gamma>3-1/p$, $b\geq 0$, and $p \in (1, \infty)$ then, the operator $\mmT_b^{-1}:L^p_{\gamma} \rightarrow M^{2,p}_{\gamma-3} \times \bbR^2$ is Lipshitz in  $b$ in the operator norm topology.
\end{lem}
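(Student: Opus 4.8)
The plan is to reduce the statement to an estimate on the $b$-derivative of the resolvent-type operator $\mmL_b^{-1}$ via the standard identity for differences of inverses, and then exploit the explicit solution formulas from Lemmas \ref{lem:FredholmLb}--\ref{lem:boundedInverseLb}. First, using the Bordering-Lemma decomposition of $\mmT_b$ from Lemma \ref{lem:InvertibleTb}, it suffices to prove the corresponding Lipschitz bound for $\mmL_b^{-1}$ together with Lipschitz dependence of the projection $P=P_b$ (equivalently, of the scalar products $\langle\cdot,\psi_i^*\rangle$); since $\psi_1^*(x)=\cosh(x)^{-2b}$ and $\psi_2^*$ depend smoothly on $b\geq 0$ with all relevant $L^q_{-\gamma}$-norms locally bounded, the $b$-dependence of $P$ and of $R^{-1}$ (the invertible $2\times 2$ matrix of scalar products) is Lipschitz, so the real content is $\mmL_b$.

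For $\mmL_b$ I would write, for $f\in\Rg(\mmL_{b_1})\cap\Rg(\mmL_{b_2})$ (or after composing with the projections, for general $f$), $\mmL_{b_1}^{-1}-\mmL_{b_2}^{-1} = \mmL_{b_1}^{-1}(\mmL_{b_2}-\mmL_{b_1})\mmL_{b_2}^{-1}$, where $\mmL_{b_2}-\mmL_{b_1} = -2(b_2-b_1)S\partial_x$. The factor $\mmL_{b_2}^{-1}\colon L^p_\gamma\to M^{2,p}_{\gamma-2}$ is uniformly bounded (Lemma \ref{lem:boundedInverseLb}, valid since $\gamma-1>2-1/p$), so $\partial_x\mmL_{b_2}^{-1}f\in L^p_{\gamma-1}$ with uniform bound; multiplication by the exponentially localized $S$ then lands us in $L^p_{\gamma'}$ for any $\gamma'$, in particular back in $L^p_\gamma$. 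Applying $\mmL_{b_1}^{-1}\colon L^p_\gamma\to M^{2,p}_{\gamma-3}$ — here is where the extra degree of localization $\gamma>3-1/p$ is spent, because I only get the $1/b_1$-type bound from Lemma \ref{lem:FredholmLb} on $\mathcal D$ and the uniform bound of Lemma \ref{lem:boundedInverseLb} on the weaker weight; but the $1/b_1$ singularity is the real enemy. To kill it, I would instead, after extracting $-2(b_2-b_1)S\partial_x\mmL_{b_2}^{-1}$, write the remaining $\mmL_{b_1}^{-1}$ applied to this exponentially localized right-hand side and use that on \emph{exponentially localized} data the solution formula for $\mmL_{b_1}^{-1}$ gives a bound uniform in $b_1\geq 0$ (the kernels $(\cosh z/\cosh y)^{-2b}$ only help, and the exponential localization of $S\partial_x(\cdots)$ dominates), landing in $M^{2,p}_{\gamma-3}$ with a constant independent of $b_1,b_2$; this yields $\|\mmL_{b_1}^{-1}f-\mmL_{b_2}^{-1}f\|_{M^{2,p}_{\gamma-3}}\leq C|b_1-b_2|\,\|f\|_{L^p_\gamma}$.

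The main obstacle is precisely handling the $1/b$ blow-up in the $\mathcal D$-bound of Lemma \ref{lem:FredholmLb}: one must verify that when the right-hand side is exponentially localized (as $S\partial_x\mmL_{b_2}^{-1}f$ is), the inner convolution in the solution formula for $u_x$, namely $\int_x^\infty |h(y)|\rme^{-2b(y-x)}\rmd y$ with $h$ exponentially localized, is bounded in $L^p_\gamma$ uniformly in $b\geq 0$ — which is true since $\rme^{-2b(y-x)}\leq 1$ for $y\geq x$, so no $1/b$ ever appears and Young's inequality gives a $b$-independent constant against the $L^1$-norm of an exponentially weighted bump. Assembling: combine this with the Lipschitz dependence of $P_b$ and of the bordering matrix to get Lipschitz dependence of the full $\mmT_b^{-1}$, at the cost of the stated third degree of localization, $\gamma>3-1/p$. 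I expect the rest (triangle inequalities, the $|\alpha|,|\beta|$ bounds, exponential localization of $\partial_{xx}S,\partial_{xx}(xS)$) to be routine bookkeeping exactly as in the proof of Lemma \ref{lem:InvertibleTb}.
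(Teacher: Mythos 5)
Your overall strategy (a resolvent identity for the difference of inverses, paying one degree of algebraic localization) is the right one, but the argument contains a genuine error: you assert that $S=\tanh(x)$ is exponentially localized, and you use this twice --- first to claim that $S\,\partial_x\mmL_{b_2}^{-1}f$ lands in $L^p_{\gamma'}$ for arbitrary $\gamma'$, and then to argue that $\mmL_{b_1}^{-1}$ applied to ``exponentially localized data'' admits a $b$-uniform bound. In fact $S$ is merely bounded ($S\to\pm1$ as $x\to\pm\infty$); only $\partial_xS$ is exponentially localized. Consequently $S\,\partial_x\mmL_{b_2}^{-1}f$ lives only in $L^p_{\gamma-1}$, and the entire branch of your argument designed to ``kill the $1/b_1$ singularity'' via exponential localization collapses.

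Fortunately the $1/b$ you are worried about is a red herring, and the repair is simpler than your workaround: the $1/b$ appears only in the $\mathcal{D}$-norm bound of Lemma \ref{lem:FredholmLb}, whereas Lemma \ref{lem:boundedInverseLb} (and hence the second bound in Lemma \ref{lem:InvertibleTb}) gives a bound \emph{uniform in $b$} into the Kondratiev space with weight dropped by two. Applying that uniform bound at weight $\gamma-1>2-1/p$ to the right-hand side $-2hS\,\partial_x\rho|_{b+h}\in L^p_{\gamma-1}$ lands you precisely in $M^{2,p}_{\gamma-3}\times\bbR^2$ with a $b$-independent constant --- exactly the stated target space, and exactly where the hypothesis $\gamma>3-1/p$ is used. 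This is what the paper does, working moreover directly with the bordered operator $\mmT_b$ (which is invertible on all of $L^p_\gamma$, so the identity $\mmT_b(\Delta\rho,\Delta\alpha,\Delta\beta)=-2hS\,\partial_x\rho|_{b+h}$ needs no projections), rather than reducing to $\mmL_b$ and separately tracking the Lipschitz dependence of $P_b$ and the $2\times2$ bordering matrix on $b$ --- extra bookkeeping your route would require (and which, at $b=0$, involves the linearly growing functions $\partial_b\psi_i^*$, so it is not entirely free).
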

\begin{proof}
We show that 
\[
\left \|  (\mmT_{b+h}^{-1} - \mmT_b^{-1})f  \right\|_{M^{2,p}_{\gamma-3} \times \bbR^2} \leq C|h|\,\|f\|_{L^p_{\gamma}}
.\]
Writing $(\rho, \alpha, \beta)|_b=\mmT_b^{-1}f$, $(\rho, \alpha, \beta)|_{b+h}=\mmT_{b+h}^{-1}f$, and $(\Delta \rho, \Delta \alpha, \Delta \beta) = (\rho, \alpha, \beta)|_{b+h}-(\rho, \alpha, \beta)|_b$, we have to show that
\[
\left \|  (\Delta \rho, \Delta \alpha, \Delta \beta) \right\|_{M^{2,p}_{\gamma-3} \times \bbR^2} \leq C|h|\,\|f\|_{L^p_{\gamma}}.
\]
A short calculation shows that 
\[
\mmT_b(\Delta \rho, \Delta \alpha, \Delta \beta)= -2h S\partial_x \rho|_{b+h},
\]
so that, using Lemma \ref{lem:InvertibleTb} with $\gamma-1>2 -1/p$, we find that 
\begin{equation}\label{eq:boundsrho}
\| (\Delta \rho, \Delta \alpha, \Delta \beta) \|_{M^{2,p}_{\gamma-3}\times \bbR^2} \leq 2 |h| C \| \partial_x \rho|_{b+h} \|_{L^p_{\gamma-1}} \leq 2  C|h| \,\|f\|_{L^p_{\gamma}},
\end{equation}
where the last inequality follows again from  $\mmT_{b+h}(\rho, \alpha, \beta) = f$ and Lemma \ref{lem:InvertibleTb} with $\gamma>2-1/p$. This proves continuity. 
\end{proof}
We next use a weaker topology to conclude differentiability.

\begin{lem}\label{lem:diffTb}
Let $\gamma>4-1/p$, $b\geq 0$, and $p \in (1, \infty)$. Then $\mmT_b^{-1}:L^p_{\gamma} \rightarrow M^{2,p}_{\gamma-4} \times \bbR^2$ is differentiable in $b$ with Lipshitz continuous derivative, with values in the operator norm topology.
\end{lem}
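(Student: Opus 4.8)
The plan is to find the derivative by formally differentiating the identity $\mmT_b\,\mmT_b^{-1}=\mathrm{Id}$ in $b$: since $b\mapsto\mmT_b$ is affine with $\partial_b\mmT_b(\rho,\alpha,\beta)=-2S\,\partial_x\rho$, the candidate for $\partial_b\mmT_b^{-1}$ is
\[
D_b f \;:=\; -\,\mmT_b^{-1}\,(\partial_b\mmT_b)\,\mmT_b^{-1} f \;=\; 2\,\mmT_b^{-1}\big(S\,\partial_x\rho\big),\qquad (\rho,\alpha,\beta)=\mmT_b^{-1}f .
\]
The first step is to check that $D_b$ is a bounded operator $L^p_\gamma\to M^{2,p}_{\gamma-4}\times\bbR^2$, uniformly in $b\geq 0$. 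By Lemma \ref{lem:InvertibleTb} at weight $\gamma>2-1/p$ we have $\rho\in M^{2,p}_{\gamma-2}$ with $\|\rho\|_{M^{2,p}_{\gamma-2}}\leq C\|f\|_{L^p_\gamma}$, hence $S\,\partial_x\rho\in L^p_{\gamma-1}$ with the same bound (as $|S|\leq 1$); applying Lemma \ref{lem:InvertibleTb} once more, now at weight $\gamma-1>2-1/p$, puts $D_bf$ into $M^{2,p}_{\gamma-3}\times\bbR^2\hookrightarrow M^{2,p}_{\gamma-4}\times\bbR^2$ with a $b$-independent bound.

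Second, I establish the differentiability estimate. Writing $(\rho,\alpha,\beta)|_b=\mmT_b^{-1}f$ and $(\Delta\rho,\Delta\alpha,\Delta\beta)=\mmT_{b+h}^{-1}f-\mmT_b^{-1}f$, the computation already used in the proof of Lemma \ref{lem:contTb} (i.e.\ $(\mmT_b-\mmT_{b+h})(\rho,\alpha,\beta)=2hS\partial_x\rho$) gives $\mmT_b(\Delta\rho,\Delta\alpha,\Delta\beta)=2h\,S\,\partial_x\rho|_{b+h}$, while by definition $\mmT_b(h D_b f)=2h\,S\,\partial_x\rho|_{b}$. Subtracting and inverting $\mmT_b$,
\[
\mmT_{b+h}^{-1}f-\mmT_b^{-1}f-hD_bf \;=\; 2h\,\mmT_b^{-1}\big(S\,\partial_x\Delta\rho\big).
\]
Now $\partial_x\Delta\rho\in L^p_{\gamma-2}$ with $\|\partial_x\Delta\rho\|_{L^p_{\gamma-2}}\leq\|\Delta\rho\|_{M^{2,p}_{\gamma-3}}\leq C|h|\,\|f\|_{L^p_\gamma}$ by Lemma \ref{lem:contTb} (valid since $\gamma>3-1/p$), and $\mmT_b^{-1}:L^p_{\gamma-2}\to M^{2,p}_{\gamma-4}\times\bbR^2$ is bounded uniformly in $b$ by Lemma \ref{lem:InvertibleTb} because $\gamma-2>2-1/p$. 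Hence
\[
\big\|\mmT_{b+h}^{-1}f-\mmT_b^{-1}f-hD_bf\big\|_{M^{2,p}_{\gamma-4}\times\bbR^2}\leq C|h|^2\,\|f\|_{L^p_\gamma},
\]
so $D_b$ is the Fr\'echet derivative, with a remainder that is in fact quadratic in $h$.

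Third, Lipschitz continuity of $b\mapsto D_b$. Writing $\rho|_{b}$ for the first component of $\mmT_{b}^{-1}f$, split
\[
D_{b+h}f-D_bf \;=\; 2\big(\mmT_{b+h}^{-1}-\mmT_b^{-1}\big)\big(S\,\partial_x\rho|_{b+h}\big)\;+\;2\,\mmT_b^{-1}\big(S\,\partial_x\Delta\rho\big).
\]
The second term was just bounded by $C|h|\,\|f\|_{L^p_\gamma}$. For the first, $S\,\partial_x\rho|_{b+h}\in L^p_{\gamma-1}$ with norm $\leq C\|f\|_{L^p_\gamma}$ (Lemma \ref{lem:InvertibleTb}, $\gamma>2-1/p$), and $\mmT_{b+h}^{-1}-\mmT_b^{-1}:L^p_{\gamma-1}\to M^{2,p}_{\gamma-4}\times\bbR^2$ has norm $\leq C|h|$ by Lemma \ref{lem:contTb} applied at weight $\gamma-1>3-1/p$; hence the first term is $\leq C|h|\,\|f\|_{L^p_\gamma}$ as well. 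Thus $\|D_{b+h}-D_b\|\leq C|h|$ in the operator norm, $C$ independent of $b$, which completes the plan.

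The argument is purely a matter of chaining Lemmas \ref{lem:InvertibleTb} and \ref{lem:contTb}, so there is no conceptual obstacle; the one delicate point is the bookkeeping of weights. Every one of the (at most two nested) applications of $\mmT_b^{-1}$ or of $\mmT_{b+h}^{-1}-\mmT_b^{-1}$ must be performed at a weight where the relevant lemma holds, and it is precisely this ``two derivatives' worth of localization'' that degrades the target space from $M^{2,p}_{\gamma-2}$ to $M^{2,p}_{\gamma-4}$ and forces the hypothesis $\gamma>4-1/p$.
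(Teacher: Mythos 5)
Your proposal is correct and follows essentially the same route as the paper: the same candidate derivative $2\,\mmT_b^{-1}S\partial_x(\mmT_b^{-1})^1$ obtained by formally differentiating the resolvent identity, uniform bounds from Lemma \ref{lem:InvertibleTb}, the $\rmO(h^2)$ remainder via the Lipschitz estimate of Lemma \ref{lem:contTb}, and the same add-and-subtract splitting (with the increment placed on the other factor, which is immaterial) for Lipschitz continuity of the derivative. The weight bookkeeping checks out at every step, and your observation that $\gamma>4-1/p$ is exactly what the two nested applications of the lemmas require matches the paper's accounting.
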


\begin{proof}
We abbreviate $R= \mmT_b^{-1}$ and define the candidate for the derivative,
\[
\partial_b R|_b f= 2\mmT_b^{-1}S \partial_x( \mmT_b^{-1})^1 f,
\]
where $(\mmT_b^{-1})^1f $ denotes the first component $\rho$ of the preimage $(\rho, \alpha, \beta)= \mmT_b^{-1}f$. Since $\gamma>4-1/p$ the following diagram, together with Corollary \ref{cor:smoothTb} (with $\gamma-2>2-1/p$) and  Proposition \ref{prop:FredholmCompDerivatives},  shows that the composition \[
\mmT_b^{-1}S \partial_x( \mmT_b^{-1})^1: L^p_{\gamma} \rightarrow M^{2,2}_{\gamma-4}\times \bbR^2
,\]
is bounded for all $b\geq0$.
\begin{equation} \label{diagram:Tb1}
\begin{tikzpicture}
\matrix(m)[matrix of math nodes,
 row sep=3.5em, column sep=5.5em,
  text height=2.5ex, text depth=0.25ex]
   {L^p_{\gamma}& M^{2,p}_{\gamma-3}& M^{1,p}_{\gamma-2}  & M^{2,p}_{\gamma-4} \times \bbR^2,  \\};
    \path[->,font=\footnotesize,>=angle 90] 
    	(m-1-1) edge node[auto] {$(\mmT_b^{-1})^1$} (m-1-2)
	(m-1-2) edge node[auto] {$S  \partial_x $} (m-1-3)
	(m-1-3) edge node[auto] {$\mmT^{-1}_b $} (m-1-4);
	\end{tikzpicture}
\end{equation}

We next show differentiability, 
\[ \left \| (R|_{b+h}  - R|_b)f - h \partial_b R|_b f \right\|_{M^{2,p}_{\gamma-4} \times \bbR^2} = \rmO(h^2).\]
Indeed, we can bound the left-hand side by
\begin{align*}
\left \| (R|_{b+h}  - R|_b)f - h \partial_b R|_b f \right\|_{M^{2,p}_{\gamma-4} \times \bbR^2}  = & \left \| 2h \mmT_b^{-1} S \partial_x (\mmT_{b+h}^{-1})1f  + 2h \mmT_b^{-1} S \partial_x(\mmT_b^{-1})^1 f  \right\|_{M^{2,p}_{\gamma-4} \times \bbR^2} \\
&\leq 2|h|\; \left \|\mmT^{-1}_bS \partial_x \right \|_{M ^{2,p}_{\gamma-3}\rightarrow M ^{2,p}_{\gamma-4}\times\R^2} \; \left \| \left( \mmT^{-1}_{b+h}- \mmT^{-1}_b \right)^1 f \right \|_{M ^{2,p}_{\gamma-3}}\\
&\leq 4|h|^2\; \left \|\mmT^{-1}_b S \partial_x \right \|_{M ^{2,p}_{\gamma-3}\rightarrow M ^{2,p}_{\gamma-4}\times\R^2} \; \|f  \|_{L^p_{\gamma}},
\end{align*}
where, because $\gamma> 4-1/p$, we are able to use  \eqref{eq:boundsrho} in the last inequality. 

Next, we show that the derivative $\partial_bR$ is continuous with respect to $b$ by proving that the following inequality holds
\[ 
\left\| \partial_bR|_{b+h}f - \partial_bR|_bf \right\|_{M^{2,p}_{\gamma-4} \times \bbR^2} \leq C |h|\,\|f\|_{L^p_{\gamma}}.
\]
We will use diagram \eqref{diagram:Tb1} along with the following modified version to justify the choice of spaces in each step. 
\begin{equation}\label{diagram:Tb2}
\begin{tikzpicture} 
\matrix(m)[matrix of math nodes,
 row sep=3.5em, column sep=5.5em,
  text height=2.5ex, text depth=0.25ex]
   {L^p_{\gamma}& M^{2,p}_{\gamma-2}& M^{1,p}_{\gamma-1}  & \mmX= M^{2,p}_{\gamma-4} \times \bbR^2.  \\};
    \path[->,font=\footnotesize,>=angle 90] 
    	(m-1-1) edge node[auto] {$(\mmT_b^{-1})^1$} (m-1-2)
	(m-1-2) edge node[auto] {$S  \partial_x $} (m-1-3)
	(m-1-3) edge node[auto] {$\mmT^{-1}_b $} (m-1-4);
	\end{tikzpicture}
\end{equation}
The triangle inequality, the continuity of the operator $\mmT_b^{-1} S \partial_x: M^{2,p}_{\gamma-2} \rightarrow M^{2,p}_{\gamma-4}\times \bbR^2$, and the continuity in $b$ of the operator $\mmT_b^{-1}:L^p_{\gamma-1} \rightarrow M^{2,p}_{\gamma-4} \times \bbR^2$ (since $\gamma -1>3-1/p$) show that
\begin{align*}
\left \|   \partial_bR|_{b+h}f \right.&\left.- \partial_bR|_bf    \right \|_{M^{2,p}_{\gamma-4} \times \bbR^2} \\
&= 2 \left \| \mmT_{b+h}^{-1} S \partial_x ( \mmT_{b+h}^{-1})^1 f -  \mmT_{b}^{-1} S \partial_x ( \mmT_{b}^{-1})^1 f \right \|_{M^{2,p}_{\gamma-4} \times \bbR^2}\\
&\leq 2 \;  \left[  \left \| \mmT^{-1}_{b+h} S \partial_x \left ( \mmT_{b+h}^{-1} - \mmT_b^{-1}\right )^1 f \right \|_{ M^{2,p}_{\gamma-4} \times \bbR^2  } + \left \| \left ( \mmT_{b+h}^{-1} - \mmT_b^{-1} \right )  ( S \partial_x (\mmT_b^{-1})^1f) \right \|_{M^{2,p}_{\gamma-4} \times \bbR^2}\right] \\
& \leq 2 \; \left[  \left \|\mmT^{-1}_{b+h} S \partial_x \right  \|_{M^{2,p}_{\gamma-3} \rightarrow M^{2,p}_{\gamma-4} \times \bbR^2} \; \left \| \left ( \mmT_{b+h}^{-1} - \mmT_b^{-1} \right)^1f \right \|_{M^{2,p}_{\gamma-3}} + 2|h| \left \| S \partial_x (\mmT_b^{-1})^1 f \right \|_{L^p_{\gamma-1}}\right].
\end{align*}
Then, since we also have continuity in $b$ of the operator $\mmT_b^{-1}:L^p_{\gamma} \rightarrow M^{2,p}_{\gamma-3} \times \bbR^2$, we obtain the desired result,
\begin{align*}
 \left \|   \partial_bR|_{b+h}f - \partial_bR|_bf    \right \|_{M^{2,p}_{\gamma-4} \times \bbR^2} & \leq 4\; |h| \left[  \left \|\mmT^{-1}_{b+h} S \partial_x \right \|_{M^{2,p}_{\gamma-3} \rightarrow M^{2,p}_{\gamma-4} \times \bbR^2} \;  \left \|f \right \|_{L^p_{\gamma}} + 2 \left \| S \partial_x (\mmT_b^{-1})^1 f \right \|_{M^{1,p}_{\gamma+1}} \right]\\
 & \leq 4\; |h| \left[  \left \|\mmT^{-1}_{b+h} S \partial_x \right \|_{M^{2,p}_{\gamma-3} \rightarrow M^{2,p}_{\gamma-4} \times \bbR^2} \;  \left \|f \right \|_{L^p_{\gamma}} + 2 \left \| S \partial_x (\mmT_b^{-1})^1\right \|_{L^p_{\gamma} \rightarrow M^{1,p}_{\gamma-1}}\; \| f  \|_{L^p_{\gamma}} \right].
\end{align*}
\end{proof}

\subsection{Differentiability of  $b\mmT_b^{-1}: L^p_{\gamma} \rightarrow M^{2,p}_{\gamma-3}\times \bbR^2$ for $\gamma>3-1/p$}\label{subsec:ContinuousbTb} 
Unfortunately, we will need differentiability of $\mmT_b^{-1}$ in a stronger topology than  the one used in the previous section. However, we can exploit that fact that the dangerous terms  carry an additional factor $b$. The following two lemmas show that the extra factor $b$ allows us to gain one degree of localization relative to the previous results. 

\begin{lem}\label{lem:contbTb}
Let $\gamma>2-1/p$ for $p \in (1, \infty)$. Then $b\mmT_b^{-1}:L^p_{\gamma} \rightarrow M^{2,p}_{\gamma-2} \times \bbR^2$ is Lipshitz continuous in $b\geq 0$ in the operator topology.
\end{lem}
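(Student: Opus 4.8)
The plan is to mimic the proof of Lemma \ref{lem:contTb}, but to track the extra factor of $b$ carefully so that one fewer degree of localization is spent. As before, write $(\rho,\alpha,\beta)|_b = \mmT_b^{-1}f$, $(\rho,\alpha,\beta)|_{b+h} = \mmT_{b+h}^{-1}f$, and let $(\Delta\rho,\Delta\alpha,\Delta\beta)$ denote the difference. The identity $\mmT_{b+h} = \mmT_b - 2hS\partial_x$ (acting on the $\rho$-component) gives
\[
\mmT_b(\Delta\rho,\Delta\alpha,\Delta\beta) = -2hS\partial_x\rho|_{b+h},
\]
exactly as in Lemma \ref{lem:contTb}. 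We then want to estimate $\|b(\Delta\rho,\Delta\alpha,\Delta\beta) - h\,\partial_b(\text{something})\|$, but for Lipschitz continuity it suffices to bound $\|(b+h)(\rho,\alpha,\beta)|_{b+h} - b(\rho,\alpha,\beta)|_b\|_{M^{2,p}_{\gamma-2}\times\bbR^2}$. Split this as $h(\rho,\alpha,\beta)|_{b+h} + b(\Delta\rho,\Delta\alpha,\Delta\beta)$; the first term is $\rmO(h)$ in $M^{2,p}_{\gamma-2}\times\bbR^2$ by Corollary \ref{cor:smoothTb}, so the whole point is the second term.

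For the second term, apply $\mmT_b^{-1}$ to the identity above and use the $1/b$-bound from the first estimate in Lemma \ref{lem:InvertibleTb}:
\[
b\|(\Delta\rho,\Delta\alpha,\Delta\beta)\|_{\mmD\times\bbR^2} \leq b\cdot\frac{C}{b}\cdot 2|h|\,\|S\partial_x\rho|_{b+h}\|_{L^p_{\gamma}} \leq 2C|h|\,\|\partial_x\rho|_{b+h}\|_{L^p_{\gamma}}.
\]
Here is where the saved degree of localization comes from: since $S$ is bounded we only pick up $\|\partial_x\rho|_{b+h}\|_{L^p_{\gamma}}$ rather than $\|\partial_x\rho|_{b+h}\|_{L^p_{\gamma-1}}$, and the $\mmD$-norm controls exactly $\partial_x\rho \in L^p_{\gamma}$ (recall $\mmD = \{u\in M^{2,p}_\sigma : u_x\in L^p_{\sigma+2}\}$ has the $\gamma$-weight, not $\gamma-2$, on the first derivative in the relevant scaling; more precisely the $\mmD$-bound of Lemma \ref{lem:FredholmLb} gives $\|\rho\|_{\mmD}\le (C/b)\|f\|$ which includes $\|\partial_x\rho\|_{L^p_{\gamma}}$). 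Then applying $\mmT_{b+h}(\rho,\alpha,\beta)|_{b+h} = f$ together with Lemma \ref{lem:InvertibleTb} once more (with weight $\gamma>2-1/p$, i.e. the $M^{2,p}_{\gamma-2}$-bound, which controls $\|\partial_x\rho|_{b+h}\|_{L^p_{\gamma-1}}$ — and since $\|\cdot\|_{L^p_{\gamma}}$ would need $\gamma$, one uses instead that the $\mmD$-bound already delivered $\partial_x\rho\in L^p_\gamma$ with a $1/b$ loss, which the factor $b$ cancels) yields
\[
b\|(\Delta\rho,\Delta\alpha,\Delta\beta)\|_{M^{2,p}_{\gamma-2}\times\bbR^2} \leq C|h|\,\|f\|_{L^p_{\gamma}},
\]
uniformly in $b\geq 0$. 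Combining with the $\rmO(h)$ bound on $h(\rho,\alpha,\beta)|_{b+h}$ gives the claimed Lipschitz continuity of $b\mmT_b^{-1}$ into $M^{2,p}_{\gamma-2}\times\bbR^2$ for $\gamma>2-1/p$.

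The main subtlety — and the step I would be most careful about — is the bookkeeping of weights in the chain $\|b(\Delta\rho,\ldots)\| \lesssim b\cdot\frac{1}{b}\cdot|h|\cdot\|\partial_x\rho|_{b+h}\|_{?}$: one must verify that the $\mmD$-bound of Lemma \ref{lem:FredholmLb} (which carries the $1/b$) controls $\partial_x\rho$ in the $L^p_{\gamma}$-weight and not merely $L^p_{\gamma-1}$, since otherwise the argument would need $\gamma>3-1/p$ and gain nothing over Lemma \ref{lem:contTb}. The factor $S$ being bounded (not just exponentially localized, which would give even more) is what makes the weights match. Everything else is Young's inequality and the triangle inequality exactly as in the previous lemmas, so I would keep the writeup short and lean on Lemmas \ref{lem:FredholmLb}, \ref{lem:boundedInverseLb}, \ref{lem:InvertibleTb} and Corollary \ref{cor:smoothTb} rather than re-deriving any estimates.
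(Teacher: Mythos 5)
Your overall strategy matches the paper's: split $(b+h)\mmT_{b+h}^{-1}f-b\mmT_b^{-1}f$ into the piece $h\,\mmT_{b+h}^{-1}f$ (handled by the uniform bound of Corollary \ref{cor:smoothTb}) and a piece involving $\mmT_{b+h}^{-1}-\mmT_b^{-1}$, and let the prefactor $b$ absorb exactly one $1/b$-type loss. However, your displayed chain spends the single factor of $b$ twice. In
\[
b\,\|(\Delta\rho,\Delta\alpha,\Delta\beta)\|_{\mmD\times\bbR^2}\;\le\; b\cdot\tfrac{C}{b}\cdot 2|h|\,\|S\partial_x\rho|_{b+h}\|_{L^p_\gamma}
\]
the $b$ is cancelled against the $C/b$ of the $\mmD$-estimate applied to the difference equation; but the remaining factor $\|\partial_x\rho|_{b+h}\|_{L^p_\gamma}$, with $\rho|_{b+h}=(\mmT_{b+h}^{-1}f)^1$ carrying no prefactor, is itself only controlled via the $\mmD$-estimate of Lemma \ref{lem:InvertibleTb}, which costs another $C/(b+h)$. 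The uniform $M^{2,p}_{\gamma-2}$-bound only places $\partial_x\rho|_{b+h}$ in $L^p_{\gamma-1}$, and at $b=0$ the extra weight is genuinely lost (the antiderivative of an $L^p_\gamma$ function is generically only $L^p_{\gamma-1}$), so the $1/(b+h)$ cannot be dropped. As written your final estimate is therefore $C|h|\,\|f\|_{L^p_\gamma}/(b+h)$, which is not a Lipschitz bound uniform in $b\ge 0$.

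The repair is to make the prefactor cancel the second loss rather than the first. Either apply the $b$-uniform bound $\|\mmT_b^{-1}\|_{L^p_\gamma\to M^{2,p}_{\gamma-2}\times\bbR^2}\le C$ to the difference equation and keep the external $b$ to pair with the $C/(b+h)$ coming from $\|\partial_x(\mmT_{b+h}^{-1}f)^1\|_{L^p_\gamma}$, using $b/(b+h)\le 1$ for $h\ge 0$ and exchanging the roles of $b$ and $b+h$ when $h<0$; or, as the paper does, define $(\rho,\alpha,\beta)|_{b+h}=(b+h)\mmT_{b+h}^{-1}f$ from the outset, so that this triple solves $\mmT_{b+h}(\cdot)=(b+h)f$ and the $\mmD$-bound yields $\|\partial_x\rho|_{b+h}\|_{L^p_\gamma}\le C\|f\|_{L^p_\gamma}$ with the $1/(b+h)$ already cancelled. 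The difference then satisfies $\mmT_b(\Delta\rho,\Delta\alpha,\Delta\beta)=hf-2hS\partial_x\rho|_{b+h}$, whose right-hand side is $\rmO(|h|)\,\|f\|_{L^p_\gamma}$ in $L^p_\gamma$, and a single application of the uniform $M^{2,p}_{\gamma-2}$-bound finishes the proof. Your identification of where the saved degree of localization comes from is correct; it is only the bookkeeping of which $1/b$ the prefactor cancels that must be fixed.
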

\begin{proof}
Similar to the preceding section, we pick $f\in L^p_{\gamma}$ and show that 
\[
\left \|  \left[ (b+h) \mmT_{b+h}^{-1} - b \mmT_b^{-1}\right] f \right\|_{M^{2,p}_{\gamma-2} \times \bbR^2} \leq C |h| \| f\|_{L^p_{\gamma}}
.\]
Equivalently, writing $( \rho, \alpha, \beta)|_b = b \mmT_b^{-1} f$ and $( \rho, \alpha, \beta)|_{b+h} = {b+h} \mmT_{b+h}^{-1} f$, we show that
\[ 
\left \| (\Delta \rho, \Delta \alpha, \Delta \beta) \right\|_{M^{2,p}_{\gamma -2} \times \bbR^2} \leq C |h| \|f\|_{L^p_{\gamma}}
.\]
First, notice that the difference $(\Delta \rho, \Delta \alpha, \Delta \beta)= ( \rho, \alpha, \beta)|_{b+h}-( \rho, \alpha, \beta)|_b$ solves the equation 
\[
\mmT_b(\Delta \rho, \Delta \alpha, \Delta \beta) = -2h S \partial_x \rho|_{b+h} + hf.
\]  
Since $\gamma> 2-1/p$, from Lemma \ref{lem:InvertibleTb} we know that the function $\rho|_{b+h}$ satisfies $\| \rho\|_{\mathcal{D} \times \bbR^2} \leq \dfrac{C}{b+h} \|(b+h) f\|_{L^p_{\gamma}}$, where $\mathcal{D} =\{ u \in M^{2,p}_{\gamma-2}: u_x \in L^p_{\gamma}\}$. Therefore,
\begin{equation}\label{eq:boundsbTb}
 \left \| (\Delta \rho, \Delta \alpha, \Delta \beta) \right\|_{M^{2,p}_{\gamma -2} \times \bbR^2} \leq C\|-2h S \partial_x \rho|_{b+h} + hf\|_{L^p_{\gamma}}\leq 3 C|h|\, \|f\|_{L^p_{\gamma}}. 
 \end{equation}
\end{proof}

\begin{lem}\label{lem:diffbTb}
Let $\gamma>3-1/p$ and $p \in (1, \infty)$.  Then  $b\mmT_b^{-1}:L^p_{\gamma} \rightarrow M^{2,p}_{\gamma-3} \times \bbR^2$ is  differentiable in $b\geq 0$ with Lipshitz continuous derivative, with values in the operator norm topology.
\end{lem}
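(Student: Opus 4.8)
The strategy mirrors the proof of Lemma \ref{lem:diffTb} but keeps track of the extra factor $b$, which buys back one degree of localization throughout. I would first write $R_b = b\mmT_b^{-1}$ and guess the derivative by differentiating the identity $\mmT_b\circ\mmT_b^{-1}=I$: since $\partial_b\mmT_b = -2S\partial_x(\cdot)^1$ on the first component, one gets formally $\partial_b(\mmT_b^{-1}) = 2\,\mmT_b^{-1}S\partial_x(\mmT_b^{-1})^1$, and hence the candidate
\[
\partial_b R_b\, f = \mmT_b^{-1}f + 2b\,\mmT_b^{-1}S\partial_x(\mmT_b^{-1})^1 f.
\]
The first term is bounded $L^p_\gamma\to M^{2,p}_{\gamma-2}\times\bbR^2$ by Corollary \ref{cor:smoothTb} (here $\gamma>3-1/p>2-1/p$), and the second term is handled by the chain $L^p_\gamma \xrightarrow{(\mmT_b^{-1})^1} M^{2,p}_{\gamma-2}\xrightarrow{S\partial_x} M^{1,p}_{\gamma-1}\xrightarrow{b\mmT_b^{-1}} M^{2,p}_{\gamma-3}\times\bbR^2$, using Corollary \ref{cor:smoothTb} for the first arrow (valid since $\gamma-2>2-1/p$ fails in general, so more carefully one routes through $M^{2,p}_{\gamma-2}$ with $\gamma-1>2-1/p$, i.e. $\gamma>3-1/p$), Proposition \ref{prop:FredholmCompDerivatives} for $S\partial_x$, and Lemma \ref{lem:contbTb} for the last arrow being bounded uniformly in $b$. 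So $\partial_b R_b$ is a well-defined bounded operator $L^p_\gamma\to M^{2,p}_{\gamma-3}\times\bbR^2$ for each $b\geq 0$.

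Next I would verify the differentiability estimate $\|(R_{b+h}-R_b)f - h\,\partial_b R_b f\|_{M^{2,p}_{\gamma-3}\times\bbR^2} = \rmO(h^2)\|f\|_{L^p_\gamma}$. Writing $(\rho,\alpha,\beta)|_b = \mmT_b^{-1}f$ (the unscaled resolvent) and using $\mmT_b\big((\rho,\alpha,\beta)|_{b+h}-(\rho,\alpha,\beta)|_b\big) = -2h\,S\partial_x\rho|_{b+h}$, one computes
\[
R_{b+h}f - R_b f = h\,\mmT_{b+h}^{-1}f + 2b(b+h)\,\mmT_b^{-1}S\partial_x\rho|_{b+h}\,\cdot\frac{1}{b+h}\cdots
\]
— more transparently, I would just add and subtract: $R_{b+h}f-R_bf = h\,\mmT_{b+h}^{-1}f + b\big(\mmT_{b+h}^{-1}-\mmT_b^{-1}\big)f$, and substitute the known expression $\big(\mmT_{b+h}^{-1}-\mmT_b^{-1}\big)f = 2h\,\mmT_b^{-1}S\partial_x(\mmT_{b+h}^{-1})^1 f$. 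Comparing with $h\,\partial_b R_b f = h\,\mmT_b^{-1}f + 2bh\,\mmT_b^{-1}S\partial_x(\mmT_b^{-1})^1 f$, the difference splits into $h(\mmT_{b+h}^{-1}-\mmT_b^{-1})f$, which is $\rmO(h^2)$ by Lemma \ref{lem:contbTb} type bounds (now with the plain, not $b$-weighted, difference — here is where $\gamma>3-1/p$ rather than $>4-1/p$ suffices, because the $b$ in front of $(\mmT_{b+h}^{-1}-\mmT_b^{-1})$ is already accounted for), and $2bh\,\mmT_b^{-1}S\partial_x\big((\mmT_{b+h}^{-1}-\mmT_b^{-1})^1 f\big)$, which is $\rmO(h^2)$ by combining boundedness of $\mmT_b^{-1}S\partial_x$ on the appropriate Kondratiev scale with the Lipschitz estimate \eqref{eq:boundsbTb} for $b\,\mmT_b^{-1}$ from Lemma \ref{lem:contbTb}. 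The diagram analogous to \eqref{diagram:Tb2}, with bottom-right space $M^{2,p}_{\gamma-3}\times\bbR^2$, justifies each arrow's domain and codomain.

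Finally, Lipschitz continuity of $b\mapsto\partial_b R_b$ in operator norm follows the template at the end of the proof of Lemma \ref{lem:diffTb}: write $\partial_b R_{b+h}f - \partial_b R_b f$ as a telescoping sum, estimate $(\mmT_{b+h}^{-1}-\mmT_b^{-1})f$ in $M^{2,p}_{\gamma-2}$ and $b\,\mmT_b^{-1}$-differences in $M^{2,p}_{\gamma-3}$ via Lemmas \ref{lem:contTb} and \ref{lem:contbTb}, and use uniform boundedness of $\mmT_b^{-1}S\partial_x$ between the relevant spaces. I expect the main obstacle to be precisely the bookkeeping of weights: ensuring that every composition $(\mmT_b^{-1})^1$, $S\partial_x$, $\mmT_b^{-1}$ (or $b\mmT_b^{-1}$) lands in a space where the next operator is bounded \emph{and} the hypotheses $\gamma-j>2-1/p$ (resp. $>1-1/p$) hold at each intermediate stage, while only assuming $\gamma>3-1/p$ overall — the gain of one derivative from the factor $b$ (via Lemma \ref{lem:contbTb}, which gives $b$-uniform bounds with a two-derivative loss instead of the one-derivative-plus-$1/b$ loss of Lemma \ref{lem:FredholmLb}) is exactly what closes the gap, and getting the constants in \eqref{eq:boundsbTb} to be genuinely $b$-independent near $b=0$ is the delicate point.
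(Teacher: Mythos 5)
Your proposal is correct and follows essentially the same route as the paper: the same candidate derivative $\mmT_b^{-1}+2b\,\mmT_b^{-1}S\partial_x(\mmT_b^{-1})^1$, the same reduction of the difference quotient to the Lipschitz estimates of Lemmas \ref{lem:contTb} and \ref{lem:contbTb} (equation \eqref{eq:boundsbTb}) combined with boundedness of $\mmT_b^{-1}S\partial_x$ on the shifted Kondratiev scale, and the same cancellation of the $1/b$ loss against the explicit prefactor $b$ in the Lipschitz bound for the derivative. The only difference is the cosmetic regrouping $R_{b+h}f-R_bf=h\mmT_{b+h}^{-1}f+b(\mmT_{b+h}^{-1}-\mmT_b^{-1})f$ versus the paper's direct use of the resolvent identity for $(b+h)\mmT_{b+h}^{-1}-b\mmT_b^{-1}$.
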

\begin{proof}
We again write $R|_b= b\mmT_b^{-1}$ and introduce the definition of the candidate for the derivative, $\partial_b R|_b =  \mmT_b^{-1} +2b\mmT_b^{-1}S \partial_x ( \mmT_b^{-1})^1$.
Following the proof of  Lemma \ref{lem:diffTb}, and since $\gamma>3-1/p$, we find uniform bounds for this operator.  We next show that that for $f \in L^p_{\gamma}$ we have
\[ 
\left \| \left(R|_{b+h} - R_b\right) f - h \partial_bR|_b f   \right\|_{M^{2,p}_{\gamma-3} \times\bbR^2} = \rmO(h^2)
.\]
Using the fact that $\left [(b+h)\mmT^{-1}_{b+h} -b\mmT_b^{-1}\right ] f = b\mmT_b^{-1} f -2h\mmT_b^{-1} S \partial_x \mmT_{b+h}^{-1})^1 f $ we can rewrite the left-hand side of the above expression as
\[
 \left \| \left(R|_{b+h} - R_b\right) f - h \partial_bR|_b f   \right\|_{M^{2,p}_{\gamma-3} \times\bbR^2} = \left \| \left [(b+h)\mmT^{-1}_{b+h} -b\mmT_b^{-1}\right ] f - h \left [ \mmT_b^{-1} -2b\mmT_b^{-1}S \partial_x(\mmT_b^{-1})1 \right ]f \right \|_{M^{2,p}_{\gamma-3} \times\bbR^2}. \]
Now, recall the inequality \eqref{eq:boundsbTb}, which for $\gamma >2-1/p$ shows the continuity in $b$ of the operator $b \mmT_b^{-1}: L^p_{\gamma} \rightarrow M^{2,p}_{\gamma-2} \times \bbR^2 $. This result, together with the linearity of $\mmT_b^{-1}S \partial_x: M^{1,p}_{\gamma-2} \times \bbR^2 \rightarrow M^{2,p}_{\gamma-3} \times \bbR^2$ (for $\gamma-1>2-1/p$), shows that
\begin{align*}
 \left \| \left(R|_{b+h} - R_b\right) f -\right.&\left. h \partial_bR|_b f   \right\|_{M^{2,p}_{\gamma-3} \times\bbR^2}  
 \\
 \leq & \left \|  \left[ h\mmT_b^{-1} f - 2h\mmT_b^{-1} S \partial_x ( (b+h) \mmT_{b+h}^{-1})^1 f \right] - h\left[   \mmT_b^{-1}f -2\mmT_b^{-1}S \partial_x(b\mmT_b^{-1})^1f \right ]   \right\|_{M^{2,p}_{\gamma-3} \times \bbR^2}\\
  \leq &2|h| \left \| \mmT_b^{-1}S \partial_x   \left[ (b+h) \mmT_{b+h}^{-1} - b \mmT_b^{-1}\right]^1f        \right\|_{M^{2,p}_{\gamma-3} \times \bbR^2}\\
  \leq & C | h|^2 \left \|   \mmT_b^{-1}S \partial_x   \right\|_{M^{2,p}_{\gamma-2} \to M^{2,p}_{\gamma-3} \times \bbR^2}  \| f\|_{L^p_{\gamma}},
\end{align*} 
as desired. The final step is to prove that the derivative $\partial_b R$ is Lipshitz in $b$. For $f\in L^p_{\gamma}$ we show that
\[ \big \| (\partial_b R|_{b+h} - \partial_bR|_b ) f\big\|_{M^{2,p}_{\gamma-3} \times \bbR^2} \leq C |h|\,\|f\|_{L^p_{\gamma}}. \]
Using the triangle inequality we obtain a first bound,
\begin{align*}
 \big \| (\partial_b R|_{b+h} &- \partial_bR|_b ) f\big\|_{M^{2,p}_{\gamma-3} \times \bbR^2} \\
 \leq & \left \| \left [ \mmT_{b+h}^{-1} -2(b+h)\mmT^{-1}_{b+h} S \partial_x (\mmT^{-1}_{b+h})^1 \right  ]f -  \left [ \mmT_{b}^{-1} -2b\mmT^{-1}_{b} S \partial_x (\mmT^{-1}_{b})^1 \right ]f \right \|_{M^{2,p}_{\gamma-3} \times \bbR^2}\\
\leq & \; \left \| \left (\mmT_{b+h}^{-1} - \mmT_b^{-1}\right )f \right \|_{M^{2,p}_{\gamma-3} \times \bbR^2} +2 \; \left \|  \left[ (b+h)\mmT_{b+h}^{-1} - b \mmT_b^{-1} \right] \left ( S \partial_x(\mmT^{-1}_{b+h})^1 \right)f    \right \|_{M^{2,p}_{\gamma-3} \times \bbR^2} \\
&+ 2| b |\; \left \|  \mmT_b^{-1} S \partial_x \left( \mmT_{b+h}^{-1} - \mmT_b^{-1} \right)^1 f  \right \|_{M^{2,p}_{\gamma-3} \times \bbR^2}.
\end{align*}
Notice that because $\gamma-1>2-1/p$, we can use again inequality \eqref{eq:boundsbTb} to conclude that the operator $b\mmT_b^{-1}:L^p_{\gamma-1} \rightarrow M^{2,p}_{\gamma-3} \times \bbR^2$ is continuous in $b$. Furthermore, since $S \partial_x( \mmT_{b+h}^{-1})^1: L^p_{\gamma} \rightarrow M^{1,p}_{\gamma-1}$ is linear, we can bound the second term in this last inequality by
\begin{equation}\label{eq:bound1}
\left \|  \left[ (b+h)\mmT_{b+h}^{-1} - b \mmT_b^{-1} \right] \left ( S \partial_x(\mmT^{-1}_{b+h})^1 \right)f    \right \|_{M^{2,p}_{\gamma-3} \times \bbR^2} \leq C| h|\,\| S \partial_x (\mmT_{b+h}^{-1})1 f\|_{L^p_{\gamma-1}} \leq C |h|\,\| f\|_{L^p_{\gamma}}
\end{equation}
On the other hand, since $\gamma-1>2-/p$ the operator $ \mmT^{-1}_b S \partial_x: M^{2,p}_{\gamma-2} \rightarrow M^{2,p}_{\gamma-3} \times \bbR^2$ is bounded and we have that
\[\left \|  \mmT_b^{-1} S \partial_x \left( \mmT_{b+h}^{-1} - \mmT_b^{-1} \right)^1 f  \right \|_{M^{2,p}_{\gamma-3} \times \bbR^2} \leq \left \|  \mmT_b^{-1} S \partial_x \right \|_{M^{2,p}_{\gamma-2} \rightarrow M^{2,p}_{\gamma-3}}  \left\| \left( \mmT_{b+h}^{-1} - \mmT_b^{-1} \right)^1 f  \right \|_{M^{2,p}_{\gamma-2}}.  \]
In particular, for $\gamma> 2 -1/p$ Lemma \ref{lem:InvertibleTb} shows that
\[ \| (\mmT_{b+h}^{-1} - \mmT_b^{-1} ) f \|_{M^{2,p}_{\gamma-2} \times \bbR^2} \leq |h|\, \| S \partial_x \rho|_{b+h} \|_{L^p_{\gamma}}\leq \frac{C|h|}{b} \| f\|_{L^p_{\gamma}}.\]
Hence,
\begin{equation}\label{eq:bound2}
\left \|  \mmT_b^{-1} S \partial_x \left( \mmT_{b+h}^{-1} - \mmT_b^{-1} \right)^1 f  \right \|_{M^{2,p}_{\gamma-3} \times \bbR^2} \leq \frac{C}{b}|h|\, \| f\|_{L^p_{\gamma}}.
\end{equation}
Finally, since $M^{2,p}_{\gamma-2} \subset M^{2,p}_{\gamma-3}$, Lemma \ref{lem:contbTb} and the bounds \eqref{eq:bound1}--\eqref{eq:bound2} show that
\[  \big \| (\partial_b R|_{b+h} - \partial_bR|_b ) f\big\|_{M^{2,p}_{\gamma-3} \times \bbR^2}
\leq C |h|\,\|f\|_{L^p_{\gamma}},\]
which concludes the proof of the lemma.
\end{proof}
%

\subsection{Proof of Theorem \ref{thm:Eikonal}}\label{subsec:ProofEikonal}
We conclude the proof of Theorem \ref{thm:Eikonal}. The following proposition makes precise the way in which we will apply the Implicit Function Theorem.
\begin{prop}\label{prop:diffFb}
Under assumption {\bf H3} and with $\phi_1,a_1$, and $b_1$ as in Section \ref{subsec:O(1)}, there exists  $\eps_0>0$ such that the operator  $F_{\eps(b_1+\beta)} : (M^{2,2}_{\sigma}\times\bbR^2) \times [0,\eps_0)  \rightarrow M^{2,2}_{\sigma} \times \bbR^2$, defined by $$F_{\eps(b_1+\beta)}(\rho, \alpha, \beta;\eps) = [I - \eps \mmT^{-1}_{\eps (b_1+\beta)} ( \tilde{N}_1 + (b_1+\beta) \tilde{N}_2)] ( \rho, \alpha, \beta),$$
 is of class $C^1$ in $(\rho, \alpha, \beta)$ and  $\eps$. Moreover, its Fr\'echet derivative $D_{(\rho,\alpha, \beta)}F_{\eps(b_1+\beta)}$ is the identity at $( \rho, \alpha, \beta;\eps)=0$, hence invertible.
\end{prop}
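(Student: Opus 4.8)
The plan is to establish the two asserted facts directly; the Implicit Function Theorem is then invoked only in the proof of Theorem~\ref{thm:Eikonal}. The statement about the Fréchet derivative is essentially free: because of the explicit prefactor $\eps$, the map $F_{\eps(b_1+\beta)}(\,\cdot\,;0)$ is the identity on $M^{2,2}_\sigma\times\bbR^2$, so once $F$ is known to be $C^1$ we get $D_{(\rho,\alpha,\beta)}F_{\eps(b_1+\beta)} = I - \eps\,D_{(\rho,\alpha,\beta)}\big(\mmT^{-1}_{\eps(b_1+\beta)}(\tilde N_1 + (b_1+\beta)\tilde N_2)\big)$, which equals $I$ (hence is invertible) at $(\rho,\alpha,\beta;\eps)=(0,0,0;0)$. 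So the real content is $C^1$-regularity, and the only delicate issue there is the bookkeeping of algebraic weights.

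\textbf{Step 1 (localization gain of the nonlinearity).} I would first note that Lemma~\ref{lem:InvertibleLaplace} applied with weight $\gamma=\sigma+4$ — legitimate since $g\in W^{2,2}_{\sigma+4}$ by (H3) and $\sigma+4>2-1/2$ — yields $\phi_1\in M^{2,2}_{\sigma+2}$, with $a_1,b_1\in\bbR$ and $b_1=-g_0/2\neq0$. I would then show that $\tilde N_1:M^{2,2}_\sigma\times\bbR^2\to L^2_{\sigma+4}$ is a $C^\infty$ (polynomial) map with bounded multilinear components. The one-dimensional weighted Sobolev embedding gives $\sup_x\langle x\rangle^{\sigma+1}|\partial_x\rho(x)|\leq C\|\rho\|_{M^{2,2}_\sigma}$, so the quadratic terms in $\rho$ obey $\|(\partial_x\rho)^2\|_{L^2_{\sigma+4}}\leq C\,\big(\sup_x\langle x\rangle^{\sigma+1}|\partial_x\rho|\big)\,\|\partial_x\rho\|_{L^2_3}\leq C\|\rho\|_{M^{2,2}_\sigma}^2$ (using $\sigma+1\geq3$); the cross terms with the fixed, more localized $\phi_1$ are estimated the same way; and every remaining term carries a factor $\partial_xS$ or $1-S^2$, which is exponentially localized and absorbs the polynomial weights $a+bx$, leaving a contribution that is exponentially localized in $x$ and polynomial in $(\alpha,\beta)$. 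Finally, $\tilde N_2=2S\partial_x\phi_1$ is a fixed element of $L^2_{\sigma+3}$.

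\textbf{Step 2 ($C^1$ dependence via $b$-regularity of the preconditioner).} Write $F=I-G_1-G_2$, with $G_1(\rho,\alpha,\beta;\eps)=\eps\,\mmT^{-1}_{\eps(b_1+\beta)}\tilde N_1(\rho,\alpha,\beta)$ and $G_2(\beta;\eps)=(b_1+\beta)\eps\,\mmT^{-1}_{\eps(b_1+\beta)}\tilde N_2 = b\,\mmT^{-1}_b\tilde N_2$ where $b=\eps(b_1+\beta)$. Since $b_1\neq0$, the quantity $b$ is a smooth function of $(\eps,\beta)$ near the base point; together with Lemma~\ref{lem:diffbTb} (which makes $b\mapsto b\,\mmT_b^{-1}$ of class $C^1$ from $L^2_{\sigma+3}$ into $M^{2,2}_\sigma\times\bbR^2$ with Lipschitz derivative, valid since $\sigma+3>3-1/2$) and Step~1, the chain rule gives $G_2\in C^1$. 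For $G_1$ the prefactor is $\eps$, not $b$, so I would instead use Lemma~\ref{lem:diffTb}: because $\tilde N_1\in L^2_{\sigma+4}$ and $\sigma+4>4-1/2$, the map $b\mapsto\mmT_b^{-1}$ is $C^1$ from $L^2_{\sigma+4}$ into $M^{2,2}_\sigma\times\bbR^2$ with Lipschitz derivative. Viewing $G_1$ as the composition of the $C^1$ map $(\rho,\alpha,\beta;\eps)\mapsto(\tilde N_1(\rho,\alpha,\beta),\eps,b)$ with the map $(f,\eps,b)\mapsto\eps\,\mmT_b^{-1}f$ — which is jointly $C^1$ since it is bilinear in $(f,\eps)$ and $C^1$ in $b$ in the operator norm — the chain rule produces $G_1\in C^1$ with
\begin{align*}
D_{(\rho,\alpha)}G_1 &= \eps\,\mmT_b^{-1}\,D_{(\rho,\alpha)}\tilde N_1, \\
\partial_\beta G_1 &= \eps\,\mmT_b^{-1}\,\partial_\beta\tilde N_1 + \eps^2\,(\partial_b\mmT_b^{-1})\tilde N_1, \\
\partial_\eps G_1 &= \mmT_b^{-1}\tilde N_1 + \eps(b_1+\beta)\,(\partial_b\mmT_b^{-1})\tilde N_1.
\end{align*}
Each term lies in $M^{2,2}_\sigma\times\bbR^2$ — using $\mmT_b^{-1}:L^2_{\sigma+4}\to M^{2,2}_{\sigma+2}\subset M^{2,2}_\sigma$ from Corollary~\ref{cor:smoothTb} and $\partial_b\mmT_b^{-1}:L^2_{\sigma+4}\to M^{2,2}_\sigma$ from Lemma~\ref{lem:diffTb} — and depends continuously on $(\rho,\alpha,\beta;\eps)$, since $\mmT_b^{-1}$ and $\partial_b\mmT_b^{-1}$ are (Lipschitz) continuous in $b$ by Lemmas~\ref{lem:contTb} and~\ref{lem:diffTb} while $\tilde N_1$ and $D\tilde N_1$ are continuous. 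Thus $F$ is $C^1$, and at $(\rho,\alpha,\beta;\eps)=(0,0,0;0)$ the factor $\eps$ annihilates $DG_1$ and $DG_2$, so $D_{(\rho,\alpha,\beta)}F=I$.

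\textbf{Expected obstacle.} The genuine difficulty is exactly this weight accounting: differentiating $\mmT_b^{-1}$ in $b$ costs up to four degrees of algebraic localization (Lemma~\ref{lem:diffTb}), whereas applying $\mmT_b^{-1}$ costs only two. The argument survives because (i) the nonlinearity \emph{gains} localization — $\tilde N_1$ maps into $L^2_{\sigma+4}$, two degrees better than the naive requirement for a map into $M^{2,2}_\sigma$, which is precisely why (H3) asks for $\sigma+4$ weights on $g$ — and (ii) in $G_2$, where the input $\tilde N_2$ is only in $L^2_{\sigma+3}$, the dangerous $\partial_b$-term always carries an extra factor $b$ (the nonlinear correction enters with a prefactor $\eps$ and $b=\eps(b_1+\beta)$), so the milder Lemma~\ref{lem:diffbTb}, losing only three degrees, suffices. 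The condition $g_0\neq0$ of (H3), which forces $b_1\neq0$, is what keeps the scalar prefactors smooth near the base point; everything else is routine once these facts are in place.
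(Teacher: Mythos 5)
Your proposal is correct and follows essentially the same route as the paper: smoothness of $\tilde N_1$ into $L^2_{\sigma+4}$ via the weighted Sobolev embedding (Lemma \ref{lem:boundedNonlinearity}), the observation $\tilde N_2\in L^2_{\sigma+3}$, and then Lemmas \ref{lem:diffTb} and \ref{lem:diffbTb} applied with $\gamma=\sigma+4$ and $\gamma=\sigma+3$ respectively, with the factor $b=\eps(b_1+\beta)\geq 0$ absorbing the worse localization loss in the $\tilde N_2$ term. Your version merely makes explicit the chain-rule bookkeeping that the paper leaves implicit.
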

In order to prove proposition \ref{prop:diffFb} we first show differentiability of the nonlinearity $\tilde{N}_1$.

\begin{lem}\label{lem:boundedNonlinearity}
The operator $\tilde{N}_1: M^{2,2}_{\sigma}\times \bbR^2 \rightarrow L^2_{\sigma+4}$ defined by 
\begin{align*}
 \tilde{N}_1(\rho, \alpha, \beta) =& (\partial_x\phi_1 + \partial_x \rho)^2 -(b_1+\beta)^2(1-S^2) +2(\partial_x \phi_1 + \partial_x \rho +(b_1 +\beta)S)(a_1 + \alpha +(b_1 +\beta)x)\partial_x S \\
 &+(a_1+ \alpha +(b_1 + \beta)x)^2(\partial_xS)^2 ,
 \end{align*}
 is smooth for $\sigma>2$ .
\end{lem}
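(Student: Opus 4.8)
The plan is to use that $\tilde N_1$ is, for the fixed functions $\phi_1,a_1,b_1$ and the fixed $S=\tanh$, a polynomial of degree two in the variables $(\rho,\partial_x\rho,\alpha,\beta)$, with coefficients assembled from $\phi_1,\partial_x\phi_1$, powers of $x$, and $S,\partial_xS$. Since a finite sum of bounded multilinear maps between Banach spaces is automatically of class $C^\infty$ (in fact real-analytic), with derivatives furnished by the product rule, it suffices to check that each of the finitely many monomial contributions to $\tilde N_1$ defines a bounded multilinear map $M^{2,2}_\sigma\times\bbR^2\to L^2_{\sigma+4}$.

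Two preliminary observations set this up. First, Hypothesis {\bf H3} with $j=0$ gives $g\in L^2_{\sigma+4}$, so Lemma~\ref{lem:InvertibleLaplace} applied with $\gamma=\sigma+4$ (admissible since $\sigma+4>2-\tfrac12$) yields $\phi_1\in M^{2,2}_{\sigma+2}$, hence $\partial_x\phi_1\in M^{1,2}_{\sigma+3}$. Second, I would record the elementary one-dimensional weighted Sobolev embedding $M^{1,2}_{\mu}\hookrightarrow L^\infty_{\mu}$: for $u\in M^{1,2}_\mu$ one has $u\langle x\rangle^\mu\in L^2$ and $(u\langle x\rangle^\mu)'=u'\langle x\rangle^\mu+\mu x u\langle x\rangle^{\mu-2}\in L^2$, so $u\langle x\rangle^\mu\in H^1(\bbR)\hookrightarrow L^\infty(\bbR)$. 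Applied to $\partial_x\rho\in M^{1,2}_{\sigma+1}$ and $\partial_x\phi_1\in M^{1,2}_{\sigma+3}$ this gives $|\partial_x\rho(x)|\le C\langle x\rangle^{-\sigma-1}\|\rho\|_{M^{2,2}_\sigma}$ and $|\partial_x\phi_1(x)|\le C\langle x\rangle^{-\sigma-3}$, while $S$ is bounded and $\partial_xS=1-S^2$ is bounded and exponentially localized.

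With these in place the monomials split into two families. The genuinely quadratic-in-derivative terms $(\partial_x\phi_1)^2$, $\partial_x\phi_1\,\partial_x\rho$, $(\partial_x\rho)^2$ are estimated by putting one factor in $L^\infty$ against its maximal admissible weight and the other in $L^2$; for instance $\|(\partial_x\rho)^2\langle x\rangle^{\sigma+4}\|_{L^2}\le\|\partial_x\rho\,\langle x\rangle^{\sigma+1}\|_{L^\infty}\,\|\partial_x\rho\,\langle x\rangle^{3}\|_{L^2}\le C\|\rho\|_{M^{2,2}_\sigma}^2$, where the inclusion $L^2_{\sigma+1}\subset L^2_{3}$ needed in the last step amounts to $\sigma\ge2$, guaranteed by {\bf H3}; this is the one point at which the localization threshold enters, the $\phi_1$-factors only improving decay. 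Every remaining monomial carries a factor $\partial_xS=1-S^2$ and so decays exponentially; since the accompanying polynomial factors $a_1+\alpha+(b_1+\beta)x$ (or its square) grow only polynomially and the other factors ($S$, $\partial_x\phi_1$, $\partial_x\rho$) are bounded, each such term is exponentially localized and therefore lies in $L^2_{\sigma+4}$ with norm bounded by a polynomial in $\|\rho\|_{M^{2,2}_\sigma}$, $|\alpha|$, $|\beta|$.

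Collecting these bounds exhibits $\tilde N_1$ as a degree-two polynomial map between the stated Banach spaces, which settles smoothness. I expect the only genuine care to be in the quadratic derivative terms — bookkeeping the weight exponents so that $\sigma\ge2$ is seen to be precisely what puts $(\partial_x\rho)^2$ into $L^2_{\sigma+4}$ — together with stating the weighted one-dimensional Sobolev embedding cleanly; the exponentially localized monomials and the passage from "bounded multilinear" to $C^\infty$ are routine.
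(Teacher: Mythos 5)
Your argument is correct and follows essentially the same route as the paper: boundedness of $\tilde N_1$ as a quadratic (multilinear) map, the weighted Sobolev embedding $M^{1,2}_{\sigma+1}\hookrightarrow L^\infty_{\sigma+1}$ to handle $(\partial_x\phi_1+\partial_x\rho)^2$ with $\sigma\geq 2$ entering exactly where you say, and exponential localization of $\partial_x S$ and $1-S^2$ for the remaining monomials. Your weight bookkeeping is in fact slightly cleaner than the paper's, so nothing further is needed.
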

\begin{proof}
Since $\tilde{N}_1$ is a bilinear Nemitsky operator, the result of the proposition follows once we show $\tilde{N}_1:M^{2,2}_{\sigma} \times \bbR^2 \rightarrow L^2_{\sigma+4}$ is well defined and bounded as a multilinear map. To that end, first notice that the terms involving $(1-S^2)$ and $\partial_xS$ are exponentially localized, hence belong to  $L^2_{\sigma+4}$.

It remains to show that  $(\partial_x \phi_1 + \partial_x \rho)^2 \in L^2_{\sigma+4}$. Since $\rho \in M^{2,2}_{\sigma}$ this term is of the form $f^2$, with  $f\in M^{1,2}_{\sigma+1}$.  In particular,  $f \cdot \langle x \rangle^{\sigma+1} \in W^{1,2}$, and, by Sobolev embeddings we have  $ f \cdot \langle x \rangle^{\sigma+1} \in C^0_\mathrm{b}$. Using $\sigma>2$ now gives the desired bound,
\[ \|f^2 \|_{L^p_{\sigma+4}} \leq \|f\langle x \rangle^{\sigma+2}\|_{C^0_\mathrm{b}} \| f\|_{L^2_{3}}\leq \|f\|_{M^{1,2}_{\sigma+1}} \| f\|_{M^{1,2}_{\sigma+1}}.\]
\end{proof}

\begin{proof}[of Proposition \ref{prop:diffFb}]
The results from Subsections \ref{subsec:ContinuousTb} and \ref{subsec:ContinuousbTb}  (choosing $\gamma= \sigma+2>4-1/2$), as well as Lemma \ref{lem:boundedNonlinearity}, and the fact that $\tilde{N}_2= 2S \partial_x \phi_1 \in L^2_{\sigma+3}$, show that the compositions 
\begin{enumerate}
\item  $\mathcal{N}_1=\mmT_{\eps (b_1+\beta)}^{-1}\tilde{N}_1(\rho,\alpha,\beta): M^{2,2}_{\sigma} \times \bbR^2 \rightarrow M^{2,2}_{\sigma} \times \bbR^2$, and
\item  $\mathcal{N}_2=\eps  (b_1+\beta)\mmT_{\eps  (b_1+\beta)}^{-1}\tilde{N}_2(\rho,\alpha,\beta):M^{2,2}_{\sigma} \times \bbR^2 \rightarrow M^{2,2}_{\sigma} \times \bbR^2$,
\end{enumerate}
are continuously differentiable in a neighborhood of the origin. It is here that we encounter the strong localization of the inhomogeneity stated in hypothesis (H3), which allow us to obtain $\tilde{N}_2 \in L^2_{\sigma+3}$ and use the results from Section  \ref{subsec:ContinuousbTb} . Here, we are using $\eps>0$ and $b_1>0$, so that $\eps(b_1+\beta)\geq 0$ for $\eps\geq 0$ and $|\beta|<b_1$.  Inspecting the dependence on $\eps$, we also readily conclude continuous differentiability in $\eps\geq 0$. At $\eps=0$, we only have the identity map, which is bounded invertible, so that the Implicit Function Theorem can be applied near the trivial root $(\rho,\alpha,\beta;\eps)=0$. 
\end{proof}

\section{Non-local Array of Oscillators}\label{section:Nonlocal}
We now return to the problem of nonlocal coupling, \eqref{e:eiknl}, and the proof of Theorem \ref{thm:Nonlocal}. Throughout, we will assume that $\int J=J_0=1$, and $\int x^2G(x)=G_2=1$, possibly after rescaling $x$ and $\phi$. 

We will see that under Hypothesis (H1), the linearization of  \eqref{e:eiknl} about the constant solution has similar Fredholm properties as the second derivative. As in the case of the eikonal equation in Section \ref{section:Eikonal}, we look for solutions of the form
\begin{equation}\label{eq:Ansatz}
\phi(x) = \tilde{\phi}(x) + a \tanh(x) + b x \tanh(x) -b^2t.
\end{equation}
and analyze the resulting equation, dropping tildes
\begin{align} \label{eq:complete}
0 =& (-I + G \ast) ( {\phi} + a \tanh(x) + b x \tanh(x)) + \eps g(x) -2b( J' \ast {\phi})(J' \ast x \tanh(x)) \\  \nonumber
& -2a(J' \ast {\phi})(J' \ast \tanh(x)) -  2ab(J' \ast \tanh(x))(J' \ast x \tanh(x))  \\ \nonumber
&- b^2[ (J' \ast x \tanh(x))^2 -1] - a^2(J' \ast \tanh(x))^2 - (J' \ast {\phi})^2.  
\end{align}

The remainder of this section is organized as follows. In Section \ref{subsec:PropertiesKernels}, we collect properties of the convolution kernels $G$ and $J$ and establish some elementary properties of the associated linear operators. Section \ref{subsec:Setup} sets up the proof, introducing first-order approximations and the nonlinear equation that we will solve using the Implicit Function Theorem. We start the proof of our main result, Theorem \ref{thm:Nonlocal}, in Section \ref{subsec:ProofMain}, subject to several propositions that establish smoothness of nonlinearities and linear operators, which are provided in  Sections \ref{subsec:Decomposition} and \ref{subsec:Differentiability}.

\subsection{ Properties of the convolution kernels $G$ and $J$} \label{subsec:PropertiesKernels}

The linear part, $-I+G\ast$, represents nonlocal diffusive coupling in the following sense. Consider the Fourier symbol $-1+\hat{G}(\ell)$, which is an analytic function on $\ell\in \R\times \rmi (-\delta,\delta)$, for some $\delta$ sufficiently small, due to the exponential localization of $G$. Moreover, by (H1),
\begin{equation}\label{e:G1}
-1+\hat{G}(\ell)< 0, \mbox{ for }\ell\neq 0 \quad -1+\hat{G}(\ell)=-\frac{1}{2}G_2\ell^2+\rmO(\ell^4),\qquad -1+\hat{G}(\ell)=-\ell^2 \hat{G}_b(\ell),
\end{equation}
with $G_b(x)$ exponentially localized, continuously differentiable, and $0 \neq \hat{G}_b(\ell)=:\hat{G}_{-1}^{-1}(\ell)$. The Fourier multiplier $\hat{G}_{-1}(\ell)$ gives rise to an order-two pseudo-differential operator and we formally write 
\[
G_{-1}*u=(1-\partial_{xx})(\tilde{G}*u),\qquad G_{-1}*(G_b*u)=u.
\]
Here $\tilde{G}$ is an order zero pseudo-differential operator.

Analyticity and exponential localization of $\hat{G}$ give uniform exponential decay of derivatives, which then readily implies bounded mapping properties in algebraically localized spaces, which we summarize below. 

%
%
%
\begin{lem} \label{lem:boundedG-I}
The convolution operators  $(-I + G ) : H^2_{\gamma} \rightarrow H^2_{\gamma}$ and  $(-I + G ) : M^{2,2}_{\gamma}\rightarrow H^2_{\gamma+2}$ are bounded for all $\gamma\geq 0$.
\end{lem}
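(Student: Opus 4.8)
The plan is to reduce both claims to the exponential localization of $G$ together with the algebraic factorization $\hat G(\ell)-1 = -\ell^2\hat G_b(\ell)$ from \eqref{e:G1}. The key observation is that convolution by an exponentially localized kernel improves (or at worst preserves) algebraic decay: if $K$ decays like $\rme^{-\eta|x|}$ then $\langle x\rangle^\gamma(K*u)(x)$ can be estimated by $\bigl(\langle\cdot\rangle^\gamma|K|\bigr)*\bigl(\langle\cdot\rangle^\gamma|u|\bigr)$ using $\langle x\rangle^\gamma \le C_\gamma\langle x-y\rangle^\gamma\langle y\rangle^\gamma$, and then Young's inequality gives $\|K*u\|_{L^2_\gamma}\le \|K\|_{L^1_\gamma}\|u\|_{L^2_\gamma}$, with $\|K\|_{L^1_\gamma}<\infty$ because $\langle x\rangle^\gamma\rme^{-\eta|x|}\in L^1$ for every $\gamma\ge 0$. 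Since $\widehat{G-\delta_0}$ is analytic on a strip and decays together with all its derivatives, $G-\delta_0$ (more precisely, the kernel representing $-I+G$) is itself exponentially localized, and the same estimate applies to its derivatives up to order two. This immediately yields boundedness of $-I+G : H^2_\gamma\to H^2_\gamma$: write $(-I+G)u$, differentiate up to twice, and in each case one has a convolution of an exponentially localized kernel (a derivative of $G$) against $u$ or one of its derivatives, which is bounded on $L^2_\gamma$ by the above.

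For the second mapping property, $(-I+G):M^{2,2}_\gamma\to H^2_{\gamma+2}$, the point is to exploit the two derivatives built into the Kondratiev norm together with the factorization $-1+\hat G(\ell)=-\ell^2\hat G_b(\ell)$. I would write, at the symbol level, $(-I+G)*u = -\partial_{xx}(G_b * u)$, i.e. $(-I+G)*u = G_b*(-\partial_{xx}u) = G_b*(-u'')$. Here $G_b$ is exponentially localized (its Fourier transform $\hat G_b=1/\hat G_{-1}$ is analytic and bounded on a strip, with exponentially decaying derivatives, so $G_b$ is exponentially localized and smooth). Now if $u\in M^{2,2}_\gamma$ then $u''\in L^2_{\gamma+2}$ by definition of the Kondratiev norm, so $G_b*u'' \in L^2_{\gamma+2}$ by the Young-type estimate above; similarly $\partial_x(G_b*u'') = G_b'*u''$ and $\partial_{xx}(G_b*u'') = G_b''*u''$ lie in $L^2_{\gamma+2}$, using that $G_b, G_b', G_b''$ are all exponentially localized. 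This gives $(-I+G)u\in H^2_{\gamma+2}$ with the claimed bound. One should double-check that the formal manipulation $(-I+G)*u = G_b*(-u'')$ is rigorous for $u\in M^{2,2}_\gamma$ — this is a Fourier-multiplier identity that holds on $C_0^\infty$ and extends by density, using that $M^{2,2}_\gamma$ is the completion of $C_0^\infty$ and that both sides are bounded in the relevant norms.

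The main obstacle, such as it is, is the bookkeeping around the factorization: one must verify that $\hat G_b(\ell)=\hat G_{-1}^{-1}(\ell)$ is indeed analytic and bounded (with bounded derivatives) on a strip $|\Imag\ell|<\delta$, which requires $\hat G(\ell)-1$ to vanish to exactly second order at $\ell=0$ and be nonzero elsewhere on the strip — this is exactly what (H1) ($\hat G-1\le 0$, $G_2>0$, exponential localization) delivers, so the inverse kernel $G_b$ is a well-defined exponentially localized function, and by the Paley–Wiener theorem its derivatives inherit exponential decay. Beyond that, the proof is routine: everything comes down to the single estimate $\|K*u\|_{L^2_\gamma}\le \|K\|_{L^1_\gamma}\|u\|_{L^2_\gamma}$ for exponentially localized $K$ and $\gamma\ge 0$, applied to $K\in\{G-\delta_0, (G-\delta_0)', (G-\delta_0)'', G_b, G_b', G_b''\}$.
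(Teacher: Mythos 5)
Your argument is correct and is essentially the one the paper intends: the paper states this lemma without proof, asserting that it follows from the exponential localization of $G$ (and of the derived kernels coming from the factorization $-1+\hat G(\ell)=-\ell^2\hat G_b(\ell)$), and your weighted Young inequality $\|K*u\|_{L^2_\gamma}\leq C\|K\|_{L^1_\gamma}\|u\|_{L^2_\gamma}$, obtained from $\langle x\rangle^\gamma\leq C\langle x-y\rangle^\gamma\langle y\rangle^\gamma$ for $\gamma\geq 0$, is exactly the missing ingredient. One imprecision: $G_b''$ is \emph{not} an exponentially localized function, since its symbol $-\ell^2\hat G_b(\ell)=\hat G(\ell)-1$ tends to $-1$ at infinity, so $G_b''=G-\delta_0$ as a distribution (consistent with the paper claiming only $C^1$ regularity for $G_b$); this costs nothing, because for the top-order term you can write $\partial_{xx}\bigl[(-I+G)u\bigr]=(-I+G)u''$ with $u''\in L^2_{\gamma+2}$ and invoke the first part of the lemma, or equivalently note that your Young estimate extends to finite measures with exponentially decaying density plus a point mass, exactly as you already implicitly use for $K=G-\delta_0$.
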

%
We note that the inverse of $(-I+G)$ is unbounded, due to the vanishing Fourier symbol at $\ell=0$. We therefore introduced the kernel $G_b$ in \eqref{e:G1} through its Fourier symbol. Considerations analogous to Lemma \ref{lem:boundedG-I} give the following result. 
%
%

\begin{lem}\label{lem:boundedpseudoinverse}
The convolution operator $G_b: L^2_{\gamma} \rightarrow H^2_{\gamma} $ is an isomorphism for all  $\gamma\geq 0$.
\end{lem}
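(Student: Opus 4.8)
The plan is to read off everything from the Fourier-symbol description in \eqref{e:G1}, peel off the leading behaviour of the symbol at infinity so that the remainder is convolution with an \emph{exponentially} localized kernel, and then assemble the weighted isomorphism from Proposition~\ref{prop:(1-dxx)} together with the weighted Young-type bounds already used for Lemma~\ref{lem:boundedG-I}. The inverse will of course be (convolution with) $G_{-1}$, written as $(1-\partial_{xx})\tilde G$ as in the text.

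\emph{Unweighted case.} On the Fourier side, $G_b$ acts as multiplication by $\hat G_b(\ell)$, and by \eqref{e:G1} the function $\langle\ell\rangle^2\hat G_b(\ell)=\langle\ell\rangle^2(1-\hat G(\ell))/\ell^2$ extends continuously to all of $\bbR$, with limits $G_2/2>0$ at $\ell=0$ and $1$ at $|\ell|\to\infty$; since $\hat G_b$ never vanishes, $\langle\ell\rangle^2\hat G_b(\ell)$ is bounded above and below by positive constants. Hence by Plancherel $\|G_b u\|_{H^2}\asymp\|\langle\ell\rangle^2\hat G_b\,\hat u\|_{L^2}\asymp\|u\|_{L^2}$, so $G_b:L^2\to H^2$ is an isomorphism, with inverse multiplication by $\hat G_{-1}=1/\hat G_b$ mapping $H^2\to L^2$ boundedly.

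\emph{Reduction to localized convolutions.} Write $\langle\ell\rangle^2\hat G_b(\ell)=1+\hat K(\ell)$ and $\hat{\tilde G}(\ell)=\hat G_{-1}(\ell)/\langle\ell\rangle^2=1+\hat{\tilde K}(\ell)$. Both $\hat K$ and $\hat{\tilde K}$ are analytic on a strip $|\Im\ell|<\delta$ (using that $\hat G_b$, hence $\hat G_{-1}$, is nonvanishing there) and decay like $\ell^{-2}$ along horizontal lines (since $\hat G(\ell)\to0$), so by a Paley--Wiener argument $K$ and $\tilde K$ are exponentially localized. Consequently, for $\gamma\geq0$, convolution with $K$ or $\tilde K$ is bounded on $L^2_\gamma$ and on $H^2_\gamma$, via the estimate $\langle x\rangle^\gamma\leq C_\gamma\langle x-y\rangle^\gamma\langle y\rangle^\gamma$ and integrability of $\langle\cdot\rangle^\gamma$ against an exponentially decaying kernel, exactly as in the proof of Lemma~\ref{lem:boundedG-I}. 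Moreover one has the operator identities $G_b=(1-\partial_{xx})^{-1}(I+K\ast\,\cdot\,)$ and $G_{-1}=(1-\partial_{xx})(I+\tilde K\ast\,\cdot\,)$, which one checks on the symbol level on $C^\infty_0$.

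\emph{Conclusion in weighted spaces.} By Proposition~\ref{prop:(1-dxx)} with $k=0$, $p=2$, the map $1-\partial_{xx}:H^2_\gamma\to L^2_\gamma$ is an isomorphism for every $\gamma$. Composing, $G_b=(1-\partial_{xx})^{-1}\circ(I+K\ast\,\cdot\,):L^2_\gamma\to L^2_\gamma\to H^2_\gamma$ is bounded, and $G_{-1}=(1-\partial_{xx})\circ(I+\tilde K\ast\,\cdot\,):H^2_\gamma\to H^2_\gamma\to L^2_\gamma$ is bounded. The relations $G_{-1}(G_bu)=u$ and $G_b(G_{-1}v)=v$ hold on $C^\infty_0$ and both sides are continuous in $u\in L^2_\gamma$, respectively $v\in H^2_\gamma$, so they persist on the completions; thus $G_b:L^2_\gamma\to H^2_\gamma$ is an isomorphism with inverse $G_{-1}$. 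The only nonroutine point is the Paley--Wiener bookkeeping of the reduction step --- verifying non-vanishing of $\hat G_b$ on a full complex strip and the $O(\ell^{-2})$ decay of the symbol remainders on horizontal lines --- but this is already implicit in the discussion surrounding \eqref{e:G1}, so it amounts to organizing those remarks; everything else is Plancherel and weighted Young estimates.
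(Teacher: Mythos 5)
Your argument is correct and follows essentially the route the paper intends: the paper omits the proof, merely invoking the factorization of the symbol through $\langle\ell\rangle^{-2}$ times an exponentially localized remainder (the discussion around \eqref{e:G1}), Proposition~\ref{prop:(1-dxx)}, and the weighted convolution bounds behind Lemma~\ref{lem:boundedG-I}, which is precisely what you carry out, including the density argument identifying $G_{-1}$ as the two-sided inverse. One minor imprecision: $\hat K=\hat G_b-\hat G$ need not decay like $\ell^{-2}$ on horizontal lines, since only $\hat G\to 0$ (with no rate) is guaranteed; but analyticity on the strip together with the $L^2$ bounds on horizontal lines inherited from $\rme^{\eta\langle x\rangle}G\in L^2$ already yields the exponential localization of $K$ needed for the weighted Young estimates, so the conclusion stands.
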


Similar statements also hold for the convolution operator $J'*u$. Since $J$ is twice continuously differentiable and exponentially localized, we find bounded mapping properties between algebraically localized spaces while gaining one derivative. 
\begin{lem}\label{lem:boundedJ}
The convolution operator $J': L^2_{\gamma} \rightarrow H^1_{\gamma}$ is bounded for all $\gamma\geq 0$.
\end{lem}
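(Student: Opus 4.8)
The plan is to treat the convolution $J'*u$ via its Fourier symbol $\rmi\ell\,\hat J(\ell)$ and exploit that $\hat J$ is analytic on a strip $\R\times\rmi(-\delta,\delta)$ (exponential localization of $J$), with uniformly decaying derivatives there because $J$ is twice continuously differentiable. First I would record the symbol-level fact that $m(\ell):=\rmi\ell\,\hat J(\ell)$ together with $\langle\ell\rangle m(\ell)$ extends analytically to the strip and is bounded there, so that by the Paley--Wiener/Cauchy-shift argument already used (implicitly) for $\hat G$ the kernel associated to the multiplier $\langle D\rangle\,\rmi\ell\hat J$ — equivalently $(1-\partial_{xx})^{1/2}J'$ — is itself exponentially localized. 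This reduces the claim to the statement that convolution by an exponentially localized $L^1$ kernel is bounded $L^2_\gamma\to L^2_\gamma$ for every $\gamma\ge 0$, together with the elementary gain-of-one-derivative estimate $\|J'*u\|_{H^1_\gamma}\sim\|(1-\partial_{xx})^{1/2}(J'*u)\|_{L^2_\gamma}$ modulo equivalence of norms.

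The second step is the weighted boundedness of convolution by an exponentially localized kernel $K$ (here $K=J'$ and also $K=(1-\partial_{xx})^{1/2}J'$). The key pointwise inequality is $\langle x\rangle^\gamma\le C_\gamma\langle x-y\rangle^{|\gamma|}\langle y\rangle^\gamma$, valid for all $\gamma\in\R$, so that for $v=K*u$,
\[
\langle x\rangle^\gamma|v(x)|\le C_\gamma\int |K(x-y)|\,\langle x-y\rangle^{\gamma}\,\langle y\rangle^\gamma|u(y)|\,\rmd y=C_\gamma\big((\langle\cdot\rangle^\gamma|K|)*(\langle\cdot\rangle^\gamma|u|)\big)(x).
\]
Since $K$ is exponentially localized, $\langle\cdot\rangle^\gamma|K|\in L^1(\R)$ for every $\gamma\ge 0$, and Young's inequality gives $\|v\|_{L^2_\gamma}\le C_\gamma\|\langle\cdot\rangle^\gamma K\|_{L^1}\|u\|_{L^2_\gamma}$. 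Applying this once with $K=J'$ gives $J'*u\in L^2_\gamma$ and once with the exponentially localized kernel representing $(1-\partial_{xx})^{1/2}J'$ gives $(1-\partial_{xx})^{1/2}(J'*u)\in L^2_\gamma$; by the Fourier characterization of $H^1_\gamma=W^{1,2}_\gamma$ recalled in Section \ref{section:WeightedSpaces} (the Fourier transform interchanging weight and smoothness indices), these two bounds together yield $\|J'*u\|_{H^1_\gamma}\le C_\gamma\|u\|_{L^2_\gamma}$, which is the assertion.

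The main obstacle — really the only nontrivial point — is justifying that the multiplier $\langle\ell\rangle\,\rmi\ell\,\hat J(\ell)$ (of order $0$ after this normalization) is the Fourier transform of an exponentially localized function, i.e. the step that converts ``twice continuously differentiable and exponentially localized'' for $J$ into an $L^1$-with-exponential-weight statement for the relevant kernel. This is the same Paley--Wiener-type argument sketched before Lemma \ref{lem:boundedG-I} for $\hat G$: analyticity of $\hat J$ on $\R\times\rmi(-\delta,\delta)$ lets one shift the contour of the inverse Fourier transform to $\im\ell=\pm\delta'$ for any $\delta'<\delta$, producing the factor $\rme^{-\delta'|x|}$, while the decay of $\hat J$ and its derivatives on horizontal lines (coming from $J\in C^2$ with exponential decay, so $\ell^2\hat J(\ell)$ bounded on the strip) makes the shifted integral absolutely convergent; dividing out the harmless $\langle\ell\rangle/\langle\ell\rangle$ and handling the $\rmi\ell$ factor as a derivative are routine. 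Once this is in place, the remainder is exactly the Young-inequality computation above, identical in spirit to the proof of Proposition \ref{prop:(1-dxx)} and the estimates in Lemma \ref{lem:FredholmLb}. One should also note that the case $\gamma=0$ is classical ($J'\in L^1$ so $J'*\colon L^2\to L^2$ with a gain of one derivative), and the weighted case follows by the commutator-with-multiplication argument of Proposition \ref{prop:(1-dxx)} if one prefers to avoid the explicit convolution estimate; either route closes the proof.
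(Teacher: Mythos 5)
The paper gives no written proof of this lemma; it is stated as a direct consequence of the weighted Young inequality for exponentially localized kernels, combined with the observation that the extra derivative falls on the kernel. Your core computation --- the Peetre inequality $\langle x\rangle^\gamma\le C_\gamma\langle x-y\rangle^{|\gamma|}\langle y\rangle^\gamma$ followed by Young's inequality with $\langle\cdot\rangle^\gamma K\in L^1$ --- is exactly the intended argument, and your remark that the unweighted case is classical with the weighted case following by conjugation with $\langle x\rangle^\gamma$ is an equally valid packaging of the same estimate.

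There is, however, one step that does not work as you wrote it: the reduction via $(1-\partial_{xx})^{1/2}$. You claim that the normalized symbol $\langle\ell\rangle\,\rmi\ell\,\hat J(\ell)$ is the Fourier transform of an exponentially localized $L^1$ function because ``$\ell^2\hat J(\ell)$ bounded on the strip makes the shifted integral absolutely convergent.'' It does not: from $J\in C^2$ one only gets $\hat J(\ell)=\rmO(\langle\ell\rangle^{-2})$, hence $\langle\ell\rangle\ell\hat J(\ell)=\rmO(1)$ on horizontal lines, and a bounded, non-decaying symbol has an inverse Fourier transform that is in general a distribution (containing a multiple of $\delta$ and worse), not an $L^1$ kernel to which weighted Young applies. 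The repair is to drop the fractional operator entirely and use the route you yourself mention in passing: write $\partial_x(J'*u)=J''*u$ and apply your weighted Young estimate twice, once with $K=J'$ and once with $K=J''$, both of which are exponentially localized under the intended reading of (H2) (as with $G$, ``exponentially localized and twice continuously differentiable'' is meant to include uniform exponential decay of the derivatives). With that substitution your proof is complete and coincides with what the paper relies on.
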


\subsection{Leading-order Ansatz and linear preconditioning} \label{subsec:Setup}
We are interested in finding steady solutions to equation \eqref{eq:complete}. Equivalently, we want to  find zeros of the operator defined by its right-hand side. From the previous section, we know that the linear part, $-I+G\ast$, can be written as the local operator $\partial_{xx}$, up to an invertible convolution operator $G_b$. Preconditioning with the inverse, $G_{-1}$, we therefore find a local linear part, but a now slightly more complicated, nonlocal nonlinearity. We will see that the basic strategy of the proof of Theorem \ref{thm:Eikonal} is still applicable. We first find leading-order approximations to the solutions of equation \eqref{eq:complete} using the properties of the Laplace operator in Kondratiev spaces.

\paragraph{First order approximations.}
We scale $\phi = \eps \phi_1, a = \eps a_1, b = \eps b_1$, and find from \eqref{eq:complete} at $\rmO(\eps)$,
\begin{equation}\label{eq:laplaceNonlocal}
 0= \partial_{xx}\phi_1 + a_1 \partial_{xx}S + b_1 \partial_{xx}(xS) + G_{-1} \ast g, \quad S= \tanh(x).
 \end{equation}
The results from Lemma \ref{lem:InvertibleLaplace}, together with Lemma \ref{lem:boundedpseudoinverse} and our assumption that the function $g$ is in the space $ H^2_{\sigma+4}$, show that solutions to equation \eqref{eq:laplaceNonlocal} satisfy $\phi_1 \in M^{2,2}_{\sigma+2}$ and 
\[ 
 a_1 = \frac{1}{2} \int x (G_{-1} \ast g)\;\rmd x=\frac{g_1}{G_2}, \quad b_1 = - \frac{1}{2} \int G_{-1} \ast g \; \rmd x= -\frac{g_0}{G_2}.
\]
As we announced earlier, we will set $G_2=J_0=1$, from now on. 
 
\paragraph{Solution Ansatz.}
We set $\phi = \eps( \phi_1 + \rho), \  a= \eps(a_1 + \alpha),\  b=\eps(b_1 + \beta),$ and insert this Ansatz into \eqref{eq:complete}. Applying the pseudo-differential operator $G_{-1}$ and dividing by $\eps$ gives
\begin{equation} \label{ eq:higherorder}
0=\tilde{F}_\eps( \rho, \alpha, \beta) :=  \mmT_{\eps (b_1+\beta)}(\rho, \alpha, \beta) - \eps {N}_1(\rho, \alpha,\beta) -2 \eps (b_1+\beta) {N}_2( \rho),
\end{equation}
where
\begin{align}\label{eq:nonlinearity}
{N}_1(\rho, \alpha,\beta) =&G_{-1} \ast \tilde{N}_1( \phi_1 + \rho,a_1 + \alpha, b_1+ \beta),\nonumber \\
\tilde{N}_1(\phi,a,b) = &b^2[(J' \ast xS)^2-1]  + a^2(J' \ast S)^2 + (J' \ast \phi)^2 + 2a( J' \ast \phi)(J' \ast S) \\ \nonumber
&+ 2ab(J'\ast S)(J' \ast xS) + 2b(J' \ast \phi)(J' \ast xS - S),
\end{align}
and 
\begin{align} \nonumber {N}_2( \rho ) =&  G_{-1} \ast ( S J' \ast( \phi_1 + \rho)) - S \partial_x \rho\\ \label{eq:nonlinearity2}
=&G_{-1}(S J' \ast \phi_1) + (G_{-1} -\delta)\ast (S J' \ast \rho) + S( J -\delta) \ast \partial_x \rho.
\end{align}

Here $\delta$ represents the Dirac delta distribution.\\
Preconditioning with the linear part, we may rewrite the equation $\tilde{F}_\eps( \rho, \alpha, \beta) =0$ as
\begin{equation*}
0=F_{\eps(b_1+\beta)}(\rho, \alpha, \beta;\eps) := [I -\eps \mmT_{\eps (b_1+\beta)}^{-1}( \tilde{N}_1 +2(b_1+\beta){N}_2)]( \rho, \alpha, \beta).
\end{equation*}
We look at $F_{\eps(b_1+\beta)}$ as a nonlinear map to which we would like to apply the Implicit Function Theorem near the trivial solution $(\rho,\alpha,\beta;\eps)=0$.  In particular, we need to show that, for $\sigma>2$, both, 
\begin{itemize}
\item[(i)] $T_{\eps (b_1+\beta) }^{-1}N_1:M^{2,2}_{\sigma} \times \bbR^2 \rightarrow M^{2,2}_{\sigma} \times \bbR^2$, and
\item[(ii)] $\eps  (b_1+\beta) T_{\eps  (b_1+\beta)}^{-1} N_2:M^{2,2}_{\sigma} \times \bbR^2 \rightarrow M^{2,2}_{\sigma} \times \bbR^2$,
\end{itemize}
are $C^1$ in  $\eps,\rho,\alpha, \beta$, for $\eps\geq 0$, and using that $b_1>0$. 
 
\subsection{Proof of Theorem \ref{thm:Nonlocal}} \label{subsec:ProofMain}
We first  concentrate  on the operator $T_{\eps (b_1+\beta) }^{-1}N_1$ from  $(i)$. In Lemma \ref{lem:SmoothN1} we show that $N_1:M^{2,2}_{\sigma} \times \bbR^2 \rightarrow L^2_{\sigma+4}$ is smooth. We then use the fact that for $\sigma+4>4-1/2$ the operator $T_b^{-1}:L^2_{\sigma+4} \rightarrow M^{2,2}_{\sigma}$ is linear and $C^1$ in $b\geq 0$; see  Section \ref{section:Eikonal}.

\begin{lem}\label{lem:SmoothN1}
For $\sigma>2$, the operator $N_1:M^{2,2}_{\sigma} \times \bbR^2 \rightarrow L^2_{\sigma+4}$, defined in (\ref{eq:nonlinearity}), is continuously differentiable in a neighborhood of the origin.
\end{lem}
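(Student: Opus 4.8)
The plan is to show that $N_1$ inherits smoothness from the local nonlinearity $\tilde N_1$ composed with bounded linear operators. Recall that $N_1(\rho,\alpha,\beta)=G_{-1}\ast \tilde N_1(\phi_1+\rho,a_1+\alpha,b_1+\beta)$, where $\tilde N_1$ (in the form \eqref{eq:nonlinearity}) is a sum of products of two factors, each of which is either an exponentially localized function ($1-(J'\ast xS)^2$, $(J'\ast S)^2$, $J'\ast S$, $J'\ast xS-S$, etc.) or a term of the form $J'\ast(\phi_1+\rho)$. First I would observe that, by Lemma \ref{lem:boundedpseudoinverse}, $G_{-1}:L^2_{\sigma+4}\to H^2_{\sigma+4}\subset L^2_{\sigma+4}$ is bounded (indeed an isomorphism onto $H^2_{\sigma+4}$), so it suffices to prove that $\tilde N_1:M^{2,2}_\sigma\times\bbR^2\to L^2_{\sigma+4}$ is smooth; composition with the bounded linear map $G_{-1}$ then preserves smoothness.

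Next I would reduce the smoothness of $\tilde N_1$ to boundedness of the associated multilinear maps, exactly as in the proof of Lemma \ref{lem:boundedNonlinearity}: $\tilde N_1$ is an affine-quadratic (bilinear Nemytskii-type) operator in $(\rho,\alpha,\beta)$, so it is $C^\infty$ as soon as each bilinear building block is bounded from $(M^{2,2}_\sigma\times\bbR^2)^2$ into $L^2_{\sigma+4}$. The terms containing at least one exponentially localized factor are harmless: multiplication by an exponentially decaying function maps $L^2_{\sigma'}$ into $L^2_{\sigma+4}$ for any $\sigma'$, and by Lemma \ref{lem:boundedJ} the factors $J'\ast(\phi_1+\rho)$ lie in $H^1_\sigma$ (since $\phi_1\in M^{2,2}_{\sigma+2}\subset L^2_\sigma$ and $\rho\in M^{2,2}_\sigma\subset L^2_\sigma$), hence in $L^\infty$ by Sobolev embedding; so those products are controlled. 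The one genuinely quadratic term in the unknown is $(J'\ast(\phi_1+\rho))^2$. Here I would write $f:=J'\ast(\phi_1+\rho)$ and use Lemma \ref{lem:boundedJ} to get $f\in H^1_{\sigma}$, so that $\langle x\rangle^\sigma f\in H^1\hookrightarrow C^0_{\mathrm b}$; combined with $f\in L^2_\sigma$ and $\sigma>2$ this yields $\|f^2\|_{L^2_{\sigma+4}}\le \|\langle x\rangle^{\sigma}f\|_{C^0_{\mathrm b}}\|\langle x\rangle^{\sigma+4}\langle x\rangle^{-\sigma}f\|_{L^2}\le C\|f\|_{H^1_\sigma}\|f\|_{L^2_4}\le C\|f\|_{H^1_\sigma}^2$, using $\sigma+4-\sigma=4$ and $L^2_\sigma\subset L^2_4$ when $\sigma\ge 4$ — and when $2<\sigma<4$ one instead splits $\langle x\rangle^{\sigma+4}=\langle x\rangle^{\sigma+2}\langle x\rangle^{2}$ with $\langle x\rangle^{\sigma+2}f\in C^0_{\mathrm b}$ (valid since $\sigma+2-\sigma=2$ is absorbed into the $H^1$ bound, as in Lemma \ref{lem:boundedNonlinearity}) and $\langle x\rangle^2 f\in L^2$. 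In either case one gets the quadratic bound $\|f^2\|_{L^2_{\sigma+4}}\le C\|\phi_1+\rho\|_{M^{2,2}_\sigma}^2$ plus a constant from the fixed $\phi_1$ contribution.

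Assembling these estimates, $\tilde N_1$ is a finite sum of bounded constant, linear, and bilinear terms in $(\rho,\alpha,\beta)$ with values in $L^2_{\sigma+4}$, hence a polynomial map of degree two between Banach spaces, therefore $C^\infty$; composing with the bounded linear $G_{-1}$ gives that $N_1:M^{2,2}_\sigma\times\bbR^2\to L^2_{\sigma+4}$ is $C^\infty$, in particular continuously differentiable near the origin, as claimed. The only subtlety — the step I expect to require the most care — is the bookkeeping of weights in the quadratic term $(J'\ast\rho)^2$: one must verify that the gain of a derivative from the $J'$-convolution (Lemma \ref{lem:boundedJ}), converted via Sobolev embedding into an $L^\infty$-with-weight bound, is exactly enough to upgrade $\rho\in M^{2,2}_\sigma$ to a product landing in the heavily weighted space $L^2_{\sigma+4}$, which is why Hypothesis (H3) asks for $\sigma+4$ derivatives of localization on $g$ (feeding into $\phi_1\in M^{2,2}_{\sigma+2}$, hence $J'\ast\phi_1\in H^1_{\sigma+2}$). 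Everything else is routine multilinear estimation together with the mapping properties already recorded in Lemmas \ref{lem:boundedpseudoinverse} and \ref{lem:boundedJ}.
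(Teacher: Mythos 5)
There is a genuine gap, and it sits precisely at the step you flagged as "routine": the mapping direction of $G_{-1}$. Lemma \ref{lem:boundedpseudoinverse} asserts that $G_b:L^2_\gamma\to H^2_\gamma$ is an isomorphism; $G_{-1}$ is (by the relation $G_{-1}\ast(G_b\ast u)=u$ and the factorization $G_{-1}\ast u=(1-\partial_{xx})(\tilde G\ast u)$) the \emph{inverse} of $G_b$, an order-\emph{two} pseudo-differential operator whose symbol grows like $\ell^2$. It therefore \emph{loses} two derivatives: it is bounded from $H^2_{\sigma+4}$ to $L^2_{\sigma+4}$, not from $L^2_{\sigma+4}$ into $H^2_{\sigma+4}$ as you claim. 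Consequently your reduction "it suffices to prove that $\tilde N_1:M^{2,2}_\sigma\times\bbR^2\to L^2_{\sigma+4}$ is smooth" is not enough: to conclude $N_1=G_{-1}\ast\tilde N_1\in L^2_{\sigma+4}$ you must show that $\tilde N_1$ takes values in $H^2_{\sigma+4}$, i.e.\ you must also estimate two weighted derivatives of each product, and in particular of $\bigl(J'\ast(\phi_1+\rho)\bigr)^2$. Your estimates only bound the $L^2_{\sigma+4}$ norm of that term, so as written the composition with $G_{-1}$ does not land in the claimed target space.

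The repair is available and is what the paper does: write $J'\ast(\phi_1+\rho)=J\ast\partial_x(\phi_1+\rho)$ and use that $J$ is twice continuously differentiable and exponentially localized, so convolution with $J$ gains the two derivatives needed to place $f:=J\ast\partial_x(\phi_1+\rho)$ and hence $f^2$, $\partial_x(f^2)=2ff'$, $\partial_x^2(f^2)=2(f')^2+2ff''$ in the appropriate weighted $L^2$ spaces by the same Sobolev-embedding argument you use (and which Lemma \ref{lem:boundedNonlinearity} records). Apart from this, your overall architecture — reduce smoothness of a quadratic Nemytskii-type map to boundedness of its multilinear pieces, dispose of all terms containing an exponentially localized factor, and treat the single genuinely quadratic term by converting $H^1$ weighted control into a weighted $L^\infty$ bound — is the same as the paper's. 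A secondary, more minor point: your weight bookkeeping for $\|f^2\|_{L^2_{\sigma+4}}$ introduces an unnecessary case split on $\sigma\geq 4$ versus $2<\sigma<4$; the single splitting $\langle x\rangle^{\sigma+4}=\langle x\rangle^{\sigma+1}\cdot\langle x\rangle^{3}$ with $f\langle x\rangle^{\sigma+1}\in C^0_{\mathrm b}$ and $\|f\|_{L^2_3}\leq\|f\|_{L^2_{\sigma+1}}$ (using $\sigma>2$) works uniformly.
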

\begin{proof}
Since $G_{-1}:H^2_{\sigma+4}\to L^2_{\sigma+4}$ is bounded and $\tilde{N}_1$ quadratic, it is sufficient to show that $\tilde{N}_1:M^{2,2}_{\sigma} \times \bbR^2 \rightarrow H^2_{\sigma+4}$ is bounded. This is immediately clear for all terms except for 
$(J' \ast (\phi_1 + \rho))^2$, due to the exponential localization of $S'$ and $(1-S^2)$. Since $J$ defines a bounded convolution operator, and since 
$J' \ast (\phi_1 + \rho)=J \ast (\partial_x\phi_1 + \partial_x\rho)$, boundedness of this remaining term follows as in Lemma \ref{lem:boundedNonlinearity}. 
%
%
 \end{proof}

To show  continuous differentiability of  $\eps(b_1+\beta)T_{\eps(b_1+\beta)}^{-1}N_2(\rho):M^{2,2}_{\sigma} \times \bbR \rightarrow M^{2,2}_{\sigma} \times \bbR^2$, we  first decompose,
\[
N_2(\rho) = D\bar{N}_2 (\rho) + G_{-1} \ast S(J' \ast \phi_1), \mbox{ where } D = \partial_x(1-\partial_x)^{-1},
\]
where $\bar{N}_2$ will be made explicit, later.  We then show that $\bar{N}_2$ is $C^1$, Section \ref{subsec:Decomposition},  and that $bT_b^{-1}D:L^2_{\sigma+2} \rightarrow M^{2,2}_{\sigma }\times\bbR^2$ is $C^1$ in $b\geq 0$, with values in the space of operators with norm topology; Section  \ref{subsec:Differentiability}.

Now, hypothesis (H3) implies that the term $\phi_1 \in M^{2,2}_{\sigma+2}$ and so   $G_{-1} \ast S(J' \ast \phi_1)\in L^2_{\sigma+3}$. Then using Lemma \ref{lem:diffbTb} with $\gamma= \sigma+3$ shows that $\eps(b_1+\beta)T_{\eps(b_1+\beta)}^{-1}G_{-1} \ast S(J' \ast \phi_1)$ is $C^1$ in $\beta$. 

Summarizing, we need to show
\begin{itemize}
\item[(i)] $\bar{N}_2: M^{2,2}_{\sigma} \rightarrow L^2_{\sigma+2}$ is continuously differentiable; see results from Section \ref{subsec:Decomposition} with $\gamma=\sigma+ 2$;
\item [(ii)] $bT_b^{-1}D: L^2_{\sigma+2} \rightarrow M^{2,2}_{\sigma} \times \bbR^2$ is continuously differentiable in $b\geq 0$; see results from Section \ref{subsec:Differentiability}, with $\gamma=\sigma+2$.
\end{itemize}

Theorem \ref{thm:Nonlocal} then follows in a completely analogous fashion to the proof of Theorem \ref{thm:Eikonal}.

%
%
%
%
%
%
%
 
 \subsection{Decomposition of $N_2(\rho)$ and smoothness of $\bar{N}_2: M^{2,2}_{\sigma} \rightarrow L^2_{\sigma+2}$, for $\sigma+2>0$}\label{subsec:Decomposition}
 
We first recall the definition of $N_2(\rho$), 
\[
N_2(\rho) =  G_{-1}(S J' \ast \phi_1) + (G_{-1} -\delta)\ast (S J' \ast \rho) + S( J -\delta) \ast \partial_x \rho.\]
 We will next show that $(J -\delta)=DJ_2$, $(G_{-1}-\delta)=DG_{-2}$, with $D= \partial_x(1-\partial_x)^{-1}$, and establish operator norm bounds on $J_2$ and $G_{-2}$. Preparing for the proof, notice that, for  $f \in M^{s,2}_{\gamma}$, 
\[\hat{f} \in H^{\gamma}, \quad k\hat{f} \in H^{\gamma+1}, \quad \cdots, \quad k^s\hat{f}\in H^{\gamma+s}
.\] 
\begin{lem}\label{lem:boundedJ-d}
For $\gamma>0$, the convolution operator $(J -\delta)$ can be written as the (commutative) product of  $\partial_x(1- \partial_x)^{-1}$ and $J_2$, where $J_2: H^1_{\gamma} \rightarrow H^1_{\gamma}$ is bounded. In particular, the composition 
\begin{equation*}
\begin{tikzpicture}
\matrix(m)[matrix of math nodes,
 row sep=3.5em, column sep=5.5em,
  text height=2.5ex, text depth=0.25ex]
   {M^{1,2}_{\gamma-1} & H^1_{\gamma}& H^1_{\gamma}, \\};
    \path[->,font=\footnotesize,>=angle 90] 
    	(m-1-1) edge node[auto] {$ \partial_x(1-\partial_x)^{-1}$} (m-1-2)
	(m-1-2) edge node[auto] {$ J_2 $} (m-1-3);
	\end{tikzpicture}
\end{equation*}
 is bounded.
\end{lem}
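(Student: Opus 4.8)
The plan is to produce $J_2$ explicitly from a primitive of the mean‑zero kernel $J-\delta$, and then reduce everything to the weighted‑convolution estimate already used for Lemmas~\ref{lem:boundedG-I}--\ref{lem:boundedJ}. By (H2) one has $\int(J-\delta)=\int J-1=0$, so the primitive
\[
\mathcal{J}(x):=\int_{-\infty}^x\bigl(J(y)-\delta(y)\bigr)\,\rmd y=\int_{-\infty}^x J(y)\,\rmd y-H(x)
\]
($H$ the Heaviside function) is the unique primitive of $J-\delta$ tending to $0$ at $\pm\infty$; it is bounded and decays exponentially as $x\to\pm\infty$ — its tails are $\int_x^{\pm\infty}J$ — though it carries a jump of size $-1$ at the origin. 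Since $\mathcal{J}'=J-\delta$, inserting $(1-\partial_x)^{-1}(1-\partial_x)=\mathrm{id}$ gives the factorization
\[
J-\delta=\partial_x\mathcal{J}=\partial_x(1-\partial_x)^{-1}(1-\partial_x)\mathcal{J}=\partial_x(1-\partial_x)^{-1}J_2,\qquad J_2:=(1-\partial_x)\mathcal{J}=\delta+(\mathcal{J}-J).
\]
All operators involved are convolution operators (Fourier multipliers), hence $\partial_x(1-\partial_x)^{-1}$ and $J_2$ commute, which is the asserted commutativity. (As a sanity check at the symbol level: $\frac{\rmi\ell}{1-\rmi\ell}\cdot\hat J_2(\ell)=\hat J(\ell)-1$ with $\hat J_2(\ell)=\frac{(1-\rmi\ell)(\hat J(\ell)-1)}{\rmi\ell}$, and this is analytic across $\ell=0$ because $J$ even and $\hat J(0)=1$ force $\hat J(\ell)-1$ to vanish to second order there; exponential localization of $J$ makes $\hat J$, hence $\hat J_2$, analytic on a strip.)

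Next I would check that $J_2:H^1_\gamma\to H^1_\gamma$ is bounded. Write $J_2=\delta+\kappa$ with $\kappa:=\mathcal{J}-J$; the identity piece is trivial, so it suffices to bound convolution by $\kappa$. The function $\kappa$ is bounded, exponentially decaying, and has a single jump, so $\kappa\in L^1_\mu$ for every $\mu\geq0$. For $\gamma\geq0$ one has $\langle x\rangle^\gamma\leq C\langle x-y\rangle^\gamma\langle y\rangle^\gamma$, so Young's inequality gives $\|\kappa\ast u\|_{L^2_\gamma}\leq C\|\kappa\|_{L^1_\gamma}\|u\|_{L^2_\gamma}$; since $\partial_x(\kappa\ast u)=\kappa\ast\partial_x u$, this upgrades to $\|\kappa\ast u\|_{H^1_\gamma}\leq C\|u\|_{H^1_\gamma}$. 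This is the concrete instance of the ``exponential decay of symbol derivatives $\Rightarrow$ bounded maps on algebraically weighted spaces'' principle quoted before Lemma~\ref{lem:boundedG-I}.

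For the composition, $\partial_x:M^{1,2}_{\gamma-1}\to L^2_\gamma$ is bounded directly from the definition of the Kondratiev norm (the derivative carries weight $\gamma$), and $(1-\partial_x)^{-1}:L^2_\gamma\to H^1_\gamma$ is an isomorphism by Proposition~\ref{prop:(1-dxx)}, so $\partial_x(1-\partial_x)^{-1}:M^{1,2}_{\gamma-1}\to H^1_\gamma$ is bounded; composing with $J_2:H^1_\gamma\to H^1_\gamma$ yields boundedness of $J_2\circ\partial_x(1-\partial_x)^{-1}=(J-\delta):M^{1,2}_{\gamma-1}\to H^1_\gamma$, as claimed, with constants depending only on $\gamma>0$. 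I expect the only point that needs a moment's care is the jump of $\mathcal{J}$ at the origin: one must observe that $J_2$ is ``$\delta$ plus an honest $L^1$ kernel'' rather than anything more singular, after which all the estimates are routine.
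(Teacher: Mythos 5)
Your factorization is exactly the paper's --- your $J_2$ has Fourier symbol $\hat J_2(\ell)=\frac{1-\rmi\ell}{\rmi\ell}(\hat J(\ell)-1)$, which is precisely how the paper defines $J_2$ --- but your proof that $J_2$ is bounded on $H^1_\gamma$ goes a genuinely different route. The paper stays on the Fourier side: it observes that $\hat J_2$ is analytic on a strip (using $\hat J(\ell)=1+\rmO(\ell^2)$) with decaying derivatives, invokes the identification $\mathcal F:L^2_\gamma\to H^\gamma$ to conclude $J_2:L^2_\gamma\to L^2_\gamma$ for integer $\gamma$, and interpolates to get general $\gamma>0$. You instead work in physical space, writing $J_2=\delta+\kappa$ with $\kappa=\mathcal J-J$ an explicit bounded, exponentially decaying $L^1$ kernel (the jump of the primitive $\mathcal J$ at the origin being harmless), and close with the weighted Young inequality $\|\kappa\ast u\|_{L^2_\gamma}\leq C\|\kappa\|_{L^1_\gamma}\|u\|_{L^2_\gamma}$ via $\langle x\rangle^\gamma\leq C\langle x-y\rangle^\gamma\langle y\rangle^\gamma$. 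Your argument is more elementary and self-contained --- it avoids interpolation and the fractional-Sobolev identification entirely, only needs $\hat J(0)=1$ rather than the second-order vanishing of $\hat J-1$, and makes transparent why the symbol tends to $1$ (not $0$) at infinity, namely the $\delta$ summand. What it buys less of is generality: the paper's symbol-level argument transfers verbatim to the pseudo-differential factor $G_{-1}-\delta$ in Lemma~\ref{lem:boundedG-d}, where an explicit physical-space kernel is not available, whereas your primitive construction is specific to the honest convolution kernel $J$. Your treatment of the composition (splitting $\partial_x(1-\partial_x)^{-1}$ through $\partial_x:M^{1,2}_{\gamma-1}\to L^2_\gamma$ and Proposition~\ref{prop:(1-dxx)}) is correct and matches the role played by Proposition~\ref{prop:FredholmCompDerivatives} in the paper.
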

\begin{proof}
Boundedness of $\partial_x(1-\partial_x)^{-1}$ was shown in 
Proposition \ref{prop:FredholmCompDerivatives}. 
To show boundedness of $J_2$, we prove the result for $\gamma$ integer, and conclude the general result by interpolation.  We define 
\[
\hat{J}_2(k) = \dfrac{1-ik}{ik}(\hat{J}(k)-1).
\]
Since we normalized $J_0=1$,  $\hat{J}(k)= 1+ \rmO(k^2)$, so that  $\hat{J}_2$ is analytic and decays at infinity. \footnote{Since $\hat{J}_2$ is of order $\rmO(k)$ near the origin, we could improve the result slightly, here.}
 
Now, suppose $f\in L^2_\gamma$. The properties of $\hat{J}_2$ then imply that $\hat{J}_2 \hat{f} \in H^{\gamma}$ and therefore $J_2:L^2_\gamma \rightarrow L^2_\gamma$ is a bounded convolution operator. Since the convolution commutes with derivatives, we also conclude boundedness on $H^1_\gamma$. 
\end{proof}
 
We consider the pseudo-differential operator operator $G_{-1}-\delta$, next.
\begin{lem}\label{lem:boundedG-d}
Let $\gamma>0$ then, the convolution operator $(G_{-1}-\delta)$ can be written as $(G_{-1}-\delta) = \partial_x(1-\partial_x)^{-1} G_{-2}$, where $G_{-2}:H^2_{\gamma} \rightarrow L^2_{\gamma}$ is bounded. In particular, the composition
\begin{equation*}
\begin{tikzpicture}
\matrix(m)[matrix of math nodes,
 row sep=3.5em, column sep=5.5em,
  text height=2.5ex, text depth=0.25ex]
   {H^2_{\gamma} & L^2_{\gamma}& L^2_{\gamma}, \\};
    \path[->,font=\footnotesize,>=angle 90] 
    	(m-1-1) edge node[auto] {$G_{-2} $} (m-1-2)
	(m-1-2) edge node[auto] {$ \partial_x(1-\partial_x)^{-1} $} (m-1-3);
	\end{tikzpicture}
\end{equation*}
 is bounded. 
\end{lem}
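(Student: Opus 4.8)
The plan is to repeat, almost verbatim, the proof of Lemma \ref{lem:boundedJ-d}, with the second-order symbol $\hat{G}_{-1}$ playing the role of the zeroth-order symbol $\hat{J}$. Boundedness of $D=\partial_x(1-\partial_x)^{-1}$ on $L^2_{\gamma}$ is contained in Proposition \ref{prop:(1-dxx)}: $(1-\partial_x)^{-1}$ is an isomorphism from $W^{k,2}_{\gamma}$ onto $W^{k+1,2}_{\gamma}$, and in the dual sense also from $W^{-1,2}_{\gamma}$ onto $L^2_{\gamma}$, so $D=(1-\partial_x)^{-1}\partial_x$ is bounded on $L^2_{\gamma}$. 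Thus everything reduces to constructing the convolution operator $G_{-2}$ with $G_{-1}-\delta=D\,G_{-2}$ and showing $G_{-2}:H^2_{\gamma}\to L^2_{\gamma}$ is bounded.

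First I would define $G_{-2}$ through its Fourier symbol,
\[
\hat{G}_{-2}(k):=\frac{1-\rmi k}{\rmi k}\bigl(\hat{G}_{-1}(k)-1\bigr),
\]
so that $\tfrac{\rmi k}{1-\rmi k}\,\hat{G}_{-2}(k)=\hat{G}_{-1}(k)-1$ as symbols; since Fourier multipliers commute this is exactly the asserted factorization $G_{-1}-\delta=D\,G_{-2}=G_{-2}\,D$. The substance of the lemma is then that $\hat{G}_{-2}$ extends to an analytic function on a strip $\bbR\times\rmi(-\delta,\delta)$ and is there a symbol of order two, i.e. bounded by $C\langle k\rangle^{2}$ with all $k$-derivatives controlled. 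Analyticity and the quadratic growth at infinity are inherited from $\hat{G}_{-1}=\hat{G}_b^{-1}$, which by \eqref{e:G1} and the exponential localization of $G_b$ is analytic, nonvanishing, and of order two on such a strip. The delicate step is the behaviour at $k=0$: one must check that $\hat{G}_{-1}(k)-1$ vanishes to the order needed for the factor $1/(\rmi k)$ to leave an analytic function, which has to be extracted from the low-frequency expansion $-1+\hat{G}(\ell)=-\tfrac12 G_2\ell^{2}+\rmO(\ell^{4})$ together with $\hat{G}_{-1}\hat{G}_b=1$; this is the analogue of the place in Lemma \ref{lem:boundedJ-d} where $J_0=1$ was used to make $\hat{J}_2$ analytic at the origin. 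Granting that $\hat{G}_{-2}$ is a well-behaved order-two symbol, boundedness of $G_{-2}$ follows exactly as for $J_2$: for integer $\gamma$ and $f\in H^2_{\gamma}$ one has, via $\mathcal{F}:W^{s,2}_{\gamma}\to W^{\gamma,2}_{s}$, that $\langle k\rangle^{2}\hat{f}\in H^{\gamma}$, and multiplication by the order-zero symbol $\langle k\rangle^{-2}\hat{G}_{-2}$ then gives $\hat{G}_{-2}\hat{f}\in H^{\gamma}$, so that $G_{-2}f\in L^2_{\gamma}$; general $\gamma\ge 0$ follows by interpolation, and composing with $D$, bounded on $L^2_{\gamma}$, closes the argument.

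I expect the only real difficulty to be the low-frequency analysis of $\hat{G}_{-2}$: in contrast to the kernel $J$, here the symbol $\hat{G}_{-1}$ is of order two and its value at the origin is governed by the second moment $G_2$, so the cancellation removing the $1/(\rmi k)$ singularity must be read off from the Taylor data in \eqref{e:G1} rather than directly. Once that is in place, the remainder is the routine weighted-Sobolev/Fourier bookkeeping already carried out for $J_2$ in the proof of Lemma \ref{lem:boundedJ-d}.
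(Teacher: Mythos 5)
Your proposal is correct and follows exactly the route the paper takes: the paper's own proof is the one-line remark that the argument of Lemma \ref{lem:boundedJ-d} carries over verbatim, ``exploiting that $\hat{G}_{-1}(0)=1$'' under the normalization $G_2=1$, and you have simply written out that omitted adaptation, correctly identifying the removal of the $1/(\rmi k)$ singularity at the origin (via $\hat{G}_{-1}\hat{G}_b=1$ and the low-frequency expansion in \eqref{e:G1}) as the only nontrivial point. No gaps.
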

\begin{proof}
The proof is similar to the proof of Lemma \ref{lem:boundedJ-d}, exploiting that $\hat{G}_{-1}(0)=1$, noticing the normalization $G_2=1$. We omit the straighforward adaptation.
\end{proof}
As a corollary to the two preceding lemmas, we have established the following decomposition.

\begin{cor}\label{cor:decomposingN2}
Let $D= \partial_x(1-\partial_x)^{-1}$ then, we can write $$ N_2(\rho) = D \bar{N}_2(\rho) + N_3,$$ where 
\begin{enumerate}
\item the operator $\bar{N}_2:M^{2,2}_{\sigma} \rightarrow L^2_{\sigma+2}$ defined by $$\bar{N}_2 (\rho) = G_{-2}\ast(S J' \ast \rho)+ S J_2 \ast \partial_x\rho,$$ is bounded for $\sigma+2>0$;
\item  the constant  $N_3= G_{-1} \ast S(J' \ast \phi_1) + [S,D](J_2 \ast \partial_x\rho)$ lies in $L^2_{\sigma+2}$. In particular, the term $[S,D] (J_2 \ast \rho)$ is exponentially localized.
\end{enumerate}
\end{cor}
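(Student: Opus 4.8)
The statement is one symbol computation followed by two routine mapping verifications, so the plan is: first produce the decomposition $N_2(\rho)=D\bar N_2(\rho)+N_3$ algebraically, using the factorizations from Lemmas~\ref{lem:boundedJ-d}--\ref{lem:boundedG-d}; then check $\bar N_2\colon M^{2,2}_\sigma\to L^2_{\sigma+2}$ is bounded; then check $N_3\in L^2_{\sigma+2}$ and that its $\rho$-dependent piece is exponentially localized. For the algebra, start from \eqref{eq:nonlinearity2}, use $J'\ast\rho=J\ast\partial_x\rho$ to write $SJ'\ast\rho-S\partial_x\rho=S(J-\delta)\ast\partial_x\rho$, so that
\[
N_2(\rho)=G_{-1}\ast S(J'\ast\phi_1)+(G_{-1}-\delta)\ast(SJ'\ast\rho)+S(J-\delta)\ast\partial_x\rho .
\]
Substitute $J-\delta=DJ_2$ and $G_{-1}-\delta=DG_{-2}$, $D=\partial_x(1-\partial_x)^{-1}$, from Lemmas~\ref{lem:boundedJ-d} and~\ref{lem:boundedG-d}. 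Since $D$, $J_2$, $G_{-2}$ are Fourier multipliers they commute, so $(G_{-1}-\delta)\ast(SJ'\ast\rho)=D\bigl(G_{-2}\ast(SJ'\ast\rho)\bigr)$, and the only place a commutator is needed is in pulling $D$ to the front of the last term,
\[
S(J-\delta)\ast\partial_x\rho=S\,D\bigl(J_2\ast\partial_x\rho\bigr)=D\bigl(SJ_2\ast\partial_x\rho\bigr)+[S,D]\bigl(J_2\ast\partial_x\rho\bigr).
\]
Collecting the two $D$-prefixed terms yields $\bar N_2(\rho)=G_{-2}\ast(SJ'\ast\rho)+SJ_2\ast\partial_x\rho$ and the remainder $N_3=G_{-1}\ast S(J'\ast\phi_1)+[S,D](J_2\ast\partial_x\rho)$, which is exactly the asserted form.

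For boundedness of $\bar N_2$ I would use the refinement of Lemma~\ref{lem:boundedJ-d} flagged in its footnote, namely that $\hat J_2(k)=\rmO(k)$ near $k=0$, together with the analogous fact for $G_{-2}$: by the normalization $G_2=1$ one has $\hat G_{-1}(0)=1$, so $\hat G_{-1}(k)-1=\rmO(k^2)$ and $\hat G_{-2}(k)=\rmO(k)$ near the origin. Since both symbols are moreover analytic in a strip, one can write $J_2=\partial_x J_{2,\mathrm b}$ with $J_{2,\mathrm b}$ an exponentially localized kernel, and $G_{-2}=\partial_x G_{-2,\mathrm b}$ with $G_{-2,\mathrm b}$ bounded $H^1_\gamma\to L^2_\gamma$ and preserving algebraic localization (its symbol is regular at the origin). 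Hence $J_2\ast\partial_x\rho=J_{2,\mathrm b}\ast\partial_x^2\rho$ and, via $J'\ast\rho=J\ast\partial_x\rho$, $G_{-2}\ast(SJ'\ast\rho)=G_{-2,\mathrm b}\ast\partial_x\bigl(S(J\ast\partial_x\rho)\bigr)$. Since $\rho\in M^{2,2}_\sigma$ gives $\partial_x\rho\in L^2_{\sigma+1}$, $\partial_x^2\rho\in L^2_{\sigma+2}$, and multiplication by $S=\tanh$ (bounded, with $S'=1-S^2$ and $S''$ exponentially localized) preserves these weighted Sobolev classes, tracking the (at most two) derivatives through the exponentially localized convolutions shows that each summand lands in $L^2_{\sigma+2}$. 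As $\bar N_2$ is linear, boundedness is smoothness.

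For the remainder term, write $D=\partial_x(1-\partial_x)^{-1}=-I+(1-\partial_x)^{-1}$, so that $[S,D]=[S,(1-\partial_x)^{-1}]=-(1-\partial_x)^{-1}S'(1-\partial_x)^{-1}$, using $[(1-\partial_x),S]=-[\partial_x,S]=-S'$. Because $S'=1-S^2$ is exponentially localized, multiplication by $S'$ maps any $L^2_\mu$ into $L^2_{\mu'}$ for every $\mu'$, so $[S,D](J_2\ast\partial_x\rho)$ is exponentially localized, in particular in $L^2_{\sigma+2}$. For the other piece, the first-order analysis of Section~\ref{subsec:Setup} gives $\phi_1\in M^{2,2}_{\sigma+2}$ under (H3), whence $J'\ast\phi_1=J\ast\partial_x\phi_1\in H^2_{\sigma+3}$; multiplying by $S$ stays in $H^2_{\sigma+3}$, and then $G_{-1}\colon H^2_{\sigma+3}\to L^2_{\sigma+3}$ (an estimate of the type in Lemma~\ref{lem:boundedG-I}) gives $G_{-1}\ast S(J'\ast\phi_1)\in L^2_{\sigma+3}\subset L^2_{\sigma+2}$. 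Hence $N_3\in L^2_{\sigma+2}$.

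I expect the boundedness of $\bar N_2$ to be the only real obstacle: the decomposition must deliver a genuine \emph{gain} of two orders of localization, from the weight $\sigma$ in $M^{2,2}_\sigma$ to $\sigma+2$ in $L^2_{\sigma+2}$, whereas the unrefined statements of Lemmas~\ref{lem:boundedJ-d}--\ref{lem:boundedG-d} only yield $L^2_{\sigma+1}$. The extra order hinges on both symbols $\hat J_2,\hat G_{-2}$ vanishing at the origin — so one more $x$-derivative may be transferred onto $\rho$, where two extra powers of weight are available in the Kondratiev norm — while remaining regular enough at infinity that these derivatives are absorbed without loss of localization. This is the ``slight improvement'' mentioned in the footnote to Lemma~\ref{lem:boundedJ-d}, here made quantitative; the commutator analysis is routine once $D$ is split off its order-zero part.
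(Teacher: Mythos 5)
Your proposal is correct and follows the same route as the paper: the same algebraic decomposition via $J-\delta=DJ_2$, $G_{-1}-\delta=DG_{-2}$ with one commutator $[S,D]$, the same mapping lemmas for item (i), and exponential localization of the commutator for item (ii). The one place where you go beyond the paper is the boundedness of $\bar N_2$ into $L^2_{\sigma+2}$: the paper justifies this by citing Lemmas \ref{lem:boundedJ-d}--\ref{lem:boundedG-d} and the order of $G_{-2}$, but as you correctly observe, the mapping properties as stated there ($J_2:H^1_\gamma\to H^1_\gamma$, $G_{-2}:H^2_\gamma\to L^2_\gamma$) only deliver $L^2_{\sigma+1}$; the missing order of localization comes from the vanishing of $\hat J_2$ and $\hat G_{-2}$ at the origin (the paper's footnote), which lets you write $J_2=\partial_x J_{2,\mathrm b}$, $G_{-2}=\partial_x G_{-2,\mathrm b}$ and push one more derivative onto $\rho$, where the Kondratiev weight pays for it. Making that quantitative, as you do, is a genuine improvement in rigor over the paper's one-line justification. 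Your commutator identity $[S,D]=-(1-\partial_x)^{-1}S'(1-\partial_x)^{-1}$ is also the clean (and sign-correct) version of the computation the paper performs by applying $(1-\partial_x)$ to $[S,D]f$; both yield exponential localization since $S'$ is exponentially localized and $(1-\partial_x)^{-1}$ preserves exponential weights.
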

\begin{proof}
A straightforward calculation shows that the decomposition of $N_2(\rho)$ is as stated in the Corollary.
Item (i) follows from Lemmas \ref{lem:boundedJ-d} and \ref{lem:boundedG-d}. In particular, notice that, because $\rho \in M^{2,2}_{\sigma}$, the function $J'\ast \rho$ satisfies
\[ 
J' \ast \rho \in H^1_{\sigma}, \quad J' \ast \partial_x \rho \in H^1_{\sigma+1}, \quad J' \ast \partial_{xx} \rho \in H^1_{\sigma+2}
,\]
and, since the pseudo-differential operator with $G_{-2}$ is of order two, the term $G_{-2}\ast(S J' \ast \rho)$ belongs to  $L^2_{\sigma+2}$.
  
To establish (ii),  we only need to show that the commutator $[S,D](J_2 \ast \partial_x \rho)$ is exponentially localized, since it was shown already at the end of Subsection \ref{subsec:ProofMain} that the term $G_{-1} \ast S(J' \ast \phi_1) $ lies in $L^2_{\sigma+3}$. 
 
In what follows we use the fact that $(1-\partial_x)^{-1}:H^2_{\eta} \rightarrow L^2_{\eta}$ is a bounded invertible operator between exponentially weighted spaces\footnote{We use the subscript $\eta$ to denote exponential weights, and the subscript $\gamma$ to denote algebraic weights and indicate which weights are referred to when confusion is possible.}, $H^1_\eta$, where
\begin{equation}\label{e:expw}
H^s_{\eta}= \{ f \in L^2: f(x) \rme^{\eta \langle x\rangle} \in H^s, \quad \eta\in \bbR \}.
\end{equation}
Now, let $f= J_2 \ast \partial_x \rho$ and examine
\begin{align*}
[S,D]f =& S \partial_x (1-\partial_x)^{-1}f + \partial_x(1-\partial_x)^{-1} S f\\
(1-\partial_x)   [S,D]f =& \partial_xS (1-\partial_x)^{-1} \partial_x f - \partial_xS f.
\end{align*}
Since the right-hand side of this last equality belongs to $H^1_\eta$, invertibility of $(1-\partial_x)^{-1}$ implies that the commutator $[S,D] f $ is exponentially localized as well.
\end{proof}
\subsection{Differentiability of  $bT_b^{-1}D:L^2_{\gamma} \rightarrow M^{2,2}_{\gamma-2}$ for $\gamma>3/2$} \label{subsec:Differentiability}

To prove continuous differentiability of $bT_b^{-1}D: L^2_{\gamma} \rightarrow M^{2,2}_{\gamma-2} \times \bbR^2 $ we first establish Lipshitz continuity. Therefore, define 
\[
Y=M^{2,2}_{\gamma-2}\cap M^{1,2}_{\gamma-1},\qquad \| f\|_Y = \|f\|_{L^2_{\gamma-1}} + \|f_x \|_{H^1_{\gamma}}.
\]
Note that $D:M^{2,2}_{\gamma-2}\to Y$ is bounded.
\begin{lem}\label{lem:bTbDcontY}
Fix $\gamma>3/2$. Then, the operator $bT_b^{-1}D:L^2_{\gamma} \rightarrow Y\times \bbR^2$ is uniformly bounded and Lipshitz continuous in the parameter $b\geq 0$.
\end{lem}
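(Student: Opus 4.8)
The plan is to prove the uniform bound and the Lipschitz bound together, the decisive ingredient being a \emph{sharpened} version of uniform boundedness: there is a constant $C$, independent of $b\geq0$, with
\[
\|bT_b^{-1}Dg\|_{Y\times\bbR^2}\;\leq\; C\,b\,\|g\|_{L^2_\gamma}.
\]
This obviously implies uniform boundedness, and the extra power of $b$ is exactly what makes the Lipschitz estimate close. The gain of the factor $b$ comes from the structure of $D=\partial_x(1-\partial_x)^{-1}$: precomposing with $D$ turns the forcing into an exact derivative, so that the first-order transport operator inside $\mmL_b$ can be inverted without the $1/b$-loss of Lemma~\ref{lem:FredholmLb}.

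To prove the displayed estimate, I would write $(\rho,\alpha,\beta)=bT_b^{-1}Dg$, so $T_b(\rho,\alpha,\beta)=b\,Dg$, and run the Lyapunov--Schmidt reduction of Lemma~\ref{lem:InvertibleTb}: this gives $|\alpha|+|\beta|\leq C\|b\,Dg\|_{L^2_\gamma}\leq Cb\|g\|_{L^2_\gamma}$ and $\mmL_b\rho=b\,Dg-\alpha\partial_{xx}S-\beta\partial_{xx}(xS)$. Since $(1-\partial_x)^{-1}g\in W^{1,2}_\gamma$ by Proposition~\ref{prop:(1-dxx)}, and $\alpha\partial_{xx}S=\partial_x(\alpha\,\partial_xS)$ with $\partial_xS$ exponentially localized, the right-hand side is $\partial_xH-\beta\,\partial_{xx}(xS)$, where $H=b(1-\partial_x)^{-1}g-\alpha\,\partial_xS\in L^2_\gamma$ with $\|H\|_{L^2_\gamma}\leq Cb\|g\|_{L^2_\gamma}$; the term $\partial_{xx}(xS)$ has $\langle\partial_{xx}(xS),1\rangle=2\neq0$ by \eqref{e:SxS}, hence is not the derivative of any $L^2_\gamma$ function and must be kept separate. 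Factoring $\mmL_b=(\partial_x-2bS)\partial_x$ and using the half-line representation from the proof of Lemma~\ref{lem:FredholmLb}, one obtains $\partial_x\rho=H+(\partial_x-2bS)^{-1}(2bSH)-\beta\,(\partial_x-2bS)^{-1}(\partial_{xx}(xS))$. For the middle term, $\|(\partial_x-2bS)^{-1}\|_{L^2_\gamma\to L^2_\gamma}\leq C/b$ (Lemma~\ref{lem:FredholmLb}) gives a bound $C\|H\|_{L^2_\gamma}\leq Cb\|g\|_{L^2_\gamma}$; for the last term, $\partial_{xx}(xS)$ is a \emph{fixed} exponentially localized function, so the same representation bounds $(\partial_x-2bS)^{-1}(\partial_{xx}(xS))$ in $L^2_\gamma$ by a constant independent of $b$ (the kernel factor $\rme^{-2b(y-x)}$ is $\leq1$ and the exponential decay of $\partial_{xx}(xS)$ supplies integrability), so that term is $\leq Cb\|g\|_{L^2_\gamma}$. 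Hence $\|\partial_x\rho\|_{L^2_\gamma}\leq Cb\|g\|_{L^2_\gamma}$. Because $\gamma>3/2$, elements of $\mmD$—in particular $\rho$—vanish at $\pm\infty$, so $\rho$ is the mean-zero antiderivative of $\partial_x\rho$, and a weighted Hardy inequality on each half-line gives $\|\rho\|_{L^2_{\gamma-1}}\leq C\|\partial_x\rho\|_{L^2_\gamma}\leq Cb\|g\|_{L^2_\gamma}$; reading $\partial_{xx}\rho=\mmL_b\rho+2bS\partial_x\rho$ off the equation gives the same bound for $\|\partial_{xx}\rho\|_{L^2_\gamma}$. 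With the bounds on $\alpha,\beta$ this is the sharpened estimate. (Incidentally, Hardy also identifies $Y$ with the space $\mmD$ of Section~\ref{subsec:OperatorFb} up to equivalent norms, so $T_b^{-1}:L^2_\gamma\to Y\times\bbR^2$ is bounded with norm $\leq C/b$.)

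For Lipschitz continuity, fix $b,b+h\geq0$ in the admissible range; since $b+h\geq0$ we may take $h>0$ and split on its size. If $h>b$, the triangle inequality and the sharpened estimate give $\|(bT_b^{-1}D-(b+h)T_{b+h}^{-1}D)g\|_{Y\times\bbR^2}\leq Cb\|g\|_{L^2_\gamma}+C(b+h)\|g\|_{L^2_\gamma}\leq3C|h|\,\|g\|_{L^2_\gamma}$. If $h\leq b$, set $(\Delta\rho,\Delta\alpha,\Delta\beta)=(b+h)T_{b+h}^{-1}Dg-bT_b^{-1}Dg$; since $T_b-T_{b+h}$ acts as $2hS\partial_x$ on the first slot, a short computation gives $T_b(\Delta\rho,\Delta\alpha,\Delta\beta)=h\,Dg+2hS\partial_x\rho^{b+h}$, where $\rho^{b+h}$ is the first component of $(b+h)T_{b+h}^{-1}Dg$. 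Applying $T_b^{-1}$ and splitting: $T_b^{-1}(h\,Dg)=\tfrac hb\,bT_b^{-1}Dg$ has $Y\times\bbR^2$-norm $\leq\tfrac{|h|}{b}\cdot Cb\|g\|_{L^2_\gamma}=C|h|\,\|g\|_{L^2_\gamma}$ by the sharpened estimate—here the extra $b$ cancels the $1/b$; and since $\|\partial_x\rho^{b+h}\|_{L^2_\gamma}\leq C(b+h)\|g\|_{L^2_\gamma}$ (the sharpened estimate at parameter $b+h$), one has $\|T_b^{-1}(2hS\partial_x\rho^{b+h})\|_{Y\times\bbR^2}\leq\tfrac Cb\cdot2|h|\,\|\partial_x\rho^{b+h}\|_{L^2_\gamma}\leq\tfrac{C}{b}\cdot2C|h|(b+h)\|g\|_{L^2_\gamma}\leq4C^2|h|\,\|g\|_{L^2_\gamma}$ using $b+h\leq2b$. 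This gives the Lipschitz bound with a constant uniform in $b\geq0$.

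The main obstacle is the sharpened estimate: the naive bound $\|bT_b^{-1}Dg\|_{Y\times\bbR^2}\leq C\|g\|_{L^2_\gamma}$, with no factor $b$, follows at once from Lemma~\ref{lem:InvertibleTb} and Hardy, but is too weak to run the case split; squeezing out the extra power of $b$ forces one to use \emph{both} the exact-derivative structure contributed by $D$ and the fact that the cokernel-carrying, exponentially localized term $\partial_{xx}(xS)$ is inverted by $\partial_x-2bS$ with a $b$-independent bound. The remaining ingredients—the precise form of the Lyapunov--Schmidt block, the half-line solution formulas, and the weighted Hardy inequality—are routine.
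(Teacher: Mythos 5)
Your proposal is correct, but it runs along a genuinely different track from the paper. The paper never proves your ``sharpened'' bound $\|bT_b^{-1}D\|_{L^2_\gamma\to Y\times\bbR^2}\leq Cb$; instead it commutes $D$ through the resolvent by introducing an auxiliary function $g_b$ satisfying $bg_b=T_b(D^{-1}\rho,\alpha,\beta)-D^{-1}T_b(\rho,\alpha,\beta)$, so that $bT_b^{-1}Df=bD^1T_{b}^{-1}(f+g_b)$ with $D^1(\rho,\alpha,\beta)=(D\rho,\alpha,\beta)$; the Lipschitz estimate then reduces to the already-proven continuity of $bT_b^{-1}$ (Lemma \ref{lem:contbTb}), uniform bounds on $D^1T_b^{-1}$, and a separate Lipschitz estimate for $b\mapsto g_b$. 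Your route instead extracts the extra factor of $b$ directly: since $Dg$ is an exact derivative (and the non-derivative cokernel direction $\partial_{xx}(xS)$ is exponentially localized, hence inverted by $\partial_x-2bS$ with a $b$-independent half-line bound), the $1/b$ loss of Lemma \ref{lem:FredholmLb} is avoided, and the Lipschitz bound closes via the $h>b$ versus $h\leq b$ case split. Your statement is strictly stronger than uniform boundedness and is in fact forced by Lipschitz continuity at $b=0$ (where the operator vanishes), so it is the ``right'' quantitative version; the paper's argument is more modular in that it recycles Section \ref{section:Eikonal} wholesale. Two points in your write-up deserve a sentence of care: (i) the identity $\partial_x\rho=H+(\partial_x-2bS)^{-1}(2bSH)-\beta(\partial_x-2bS)^{-1}\partial_{xx}(xS)$ applies the one-sided inverse to summands that do not individually satisfy the compatibility condition $\langle\cdot,\psi_1^*\rangle=0$, so the individual pieces may jump at $x=0$; the estimates must be read half-line by half-line, with the sum reconstructing the genuine decaying solution $\partial_x\rho-H$. (ii) The bounds $|\alpha|,|\beta|\leq Cb\|g\|_{L^2_\gamma}$ need the Lyapunov--Schmidt $2\times2$ matrix and the functionals $\langle\cdot,\psi_i^*\rangle$ to be controlled uniformly in $b$ small, which holds for $\gamma>3/2$ but should be said. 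Neither point is a gap in substance.
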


\begin{proof}
We show Lipshitz continuity; uniform bounds can be established in a similar fashion. For $f \in L^2_{\gamma}$, we need to obtain the following estimate,
\[
\left \| (b+h)T^{-1}_{b+h} D f - bT_b^{-1} Df \right \|_{Y \times \bbR^2} \leq C |h|\,\|f\|_{L^2_\gamma}
.\]
We define the auxiliary function $g_b$, which will monitor commutators between $T_b$ and $D$. For this, first write
\[
T_b(\rho,\alpha,\beta)=\partial_{xx}\rho+2bS\partial_x\rho +\alpha\partial_{xx}S+\beta\partial_{xx}(xS)=bDf,
\]
and set 
\[
b\partial_x g_b  = 2b \partial_x(S \rho) -2b S \partial_x \rho + \alpha( 2 \partial_{xxx} S - \partial_{xx} S) + \beta(2 \partial_{xxx}(xS) - \partial_{xx}(xS) ).
\]
One readily notices that the right-hand side of this identity is exponentially localized and verifies that its average vanishes:
\begin{align*}
\int b \partial_x g_b &= - \int ( \partial_{xx} \rho + 2bS \partial_x \rho + \alpha \partial_{xx}S + \beta \partial_{xx}(xS)) \rmd x + 2 \left[ \int  \partial_{xxx} ( \alpha S + \beta xS) \rmd x \right]\\
&= - \int T_b( \rho, \alpha, \beta) \rmd x +  2 \left[ \int  \partial_{xxx} ( \alpha S + \beta xS) \rmd x \right]\\
&= - \int bD f \;\rmd x +  2 \left[ \int  \partial_{xxx} ( \alpha S + \beta xS) \rmd x \right].
\end{align*}
As a consequence, 
\[
bg_b = 2bS\rho-2b(\partial_x)^{-1}S\partial_x \rho + \alpha( \partial_{xx}S - D^{-1} \partial_{xx}S) + \beta( \partial_{xx}(xS) - D^{-1} \partial_{xx}(xS) ),
\]
is well defined and bounded in terms of $bT_b^{-1}Df$. A short calculation shows that 
\[
bg_b =  T_b(D^{-1}\rho, \alpha, \beta) - D^{-1}T_b( \rho , \alpha, \beta),
\]
so that 
\begin{align*}
(\rho,\alpha,\beta)_b &= bT_b^{-1} D f = bD^1T_b^{-1}(f + g_b)\\
(\rho,\alpha,\beta)_{b+h} &= (b+h)T_{b+h}^{-1} D f = (b+h)D^1T_{b+h}^{-1}(f + g_{b+h})
\end{align*}
where we used the shorthand  $D^1(\rho, \alpha, \beta) = (D\rho, \alpha, \beta)$.
The result of the lemma then follows if we can show that
\[ 
\left \| (b+h)D^1T^{-1}_{b+h} (f+g_{b+h}) - bD^1T_b^{-1}(f+g_b) \right\|_{Y \times \bbR^2} \leq C |h|\,\|f\|_{L^2_{\gamma}}
.\]
With this goal in mind, we we use the triangle inequality to obtain
\begin{align*}
 \left \| (b+h)D^1T^{-1}_{b+h} (f+g_{b+h}) - bD^1\right.&\left.T_b^{-1}(f+g_b) \right \|_{Y \times \bbR^2} \\
 \leq &\;  \left\| (b+h)D^1T^{-1}_{b+h} (f+g_{b+h}) - bD^1T_b^{-1}(f+g_{b+h}) \right \|_{Y \times \bbR^2}\\
&+\left \| bD^1T^{-1}_{b} (f+g_{b+h}) - bD^1T_b^{-1}(f+g_b) \right\|_{Y \times \bbR^2}.
\end{align*}
Uniform bounds on $D^1T^{-1}_b:L^2_{\gamma} \rightarrow Y \times \bbR^2$, which follow immediately from Corollary \ref{cor:smoothTb} (with $\gamma>2-1/p = 3/2$) and the definition of $Y$, allow us to simplify this estimate further,
\begin{align*}
 \left \| (b+h)D^1T^{-1}_{b+h} (f+g_{b+h}) -\right.&\left. bD^1T_b^{-1}(f+g_b) \right \|_{Y \times \bbR^2} \\
 \leq &\;  \left \| (b+h)D^1T^{-1}_{b+h} (f+g_{b+h}) - bD^1T_b^{-1}(f+g_{b+h}) \right \|_{Y \times \bbR^2}\\ 
 &+ C |b|\;  \| g_{b+h}-g_b \|_{L^2_{\gamma}}.
\end{align*}
The next step is to show that the operator $bD^1T_b^{-1}:L^2_{\gamma} \rightarrow Y \times \bbR^2$ is continuous in $b$ and that the difference  $g_b\in L^2_{\gamma}$ is Lipshitz in $b$.
To show the continuity in $b$ of  $bD^1T_b^{-1}$ we let 
\[\
( \rho, \alpha, \beta) |_b = bD^1 T_b^{-1} f, \quad  (\rho,\alpha, \beta)|_{b+h} = (b+h)D^1T_{b+h}^{-1} f, 
\]
and show the inequality,
\[ \|  ( \rho, \alpha, \beta) |_{b+h}- ( \rho, \alpha, \beta) |_b \|_{Y\times\bbR^2}\leq C |h| \|f\|_{L^2_{\gamma}}.\]
Letting $\psi = D^{-1} \rho$, the expression $( \rho, \alpha, \beta)= bD^1T_bf$ can be written as $T_b( \psi, \alpha, \beta) = b f$, and a short calculation shows that the difference between $(\psi,\alpha, \beta)|_b$ and $(\psi, \alpha, \beta)|_{b+h}$, denoted by $(\Delta \psi, \Delta \alpha, \Delta \beta)$, satisfies the equations $$T_b(\Delta \psi, \Delta \alpha, \Delta \beta)= hf - 2hS\partial_x \psi|_{b+h}.$$
Now, since the function $\psi|_{b+h}$ is a solution to $T_{b+h}( \psi, \alpha, \beta) = (b+h)f$, the results from Lemma \ref{lem:InvertibleTb} with $\gamma >1-1/p=1/2$ and $\mathcal{D}=\{ u \in M^{2,2}_{\gamma-2} : u_x \in L^2_{\gamma}\}$, show that
\[\| \psi \|_{\mathcal{D}} \leq \frac{C}{|b+h|} \| (b+h) f\|_{L^2_{\gamma}}.\]
In particular, we see that $\partial_x \psi|_{b+h}$ is in the space $L^2_{\gamma}$ and therefore, solutions to $$T_b(\Delta \psi, \Delta \alpha, \Delta \beta)= hf - 2hS\partial_x \psi|_{b+h},$$ with $\gamma>2-1/p= 3/2$, satisfy the inequality
\begin{equation}
\| ( \Delta \psi, \alpha, \Delta \beta)\|_{M^{2,2}_{\gamma-2} \times \bbR^2} \leq C |h|\,\| f - 2S\partial_x \psi|_{b+h}\|_{L^2_{\gamma}}\leq Ch \| f\|_{L^2_{\gamma}}.
\end{equation}
Lastly, because the operator $D^{-1}$ is linear, the difference $\Delta \psi = D^{-1} \Delta \rho$ is  in  $M^{2,2}_{\gamma-2}$ and we may conclude that $\Delta \rho \in Y$. Therefore,
\[  \| \Delta \rho, \Delta \alpha, \Delta \beta)\|_{Y\times \bbR^2}\leq C h\| f\|_{L^2_{\gamma}},\]
as desired.
Finally, we show that 
\[
|b|\,\| g_{b+h}-g_b\|_{L^2_{\gamma}} \leq C|h|\,\|f\|_{L^2_{\gamma}}.
\]
Notice that, writing $\psi_b= D^{-1} \rho_b$,
\[ 
T_b( \psi_b, \alpha_b, \beta_b) = b(f +g_b), \qquad T_{b+h}( \psi_{b+h}, \alpha_{b+h}, \beta_{b+h}) = (b+h)(f +g_{b+h})
.\]
Subtracting both equations and using the triangle inequality, we find that
\begin{align*}
| b|\,\| g_{b+h}- g_b\|_{L^2_{\gamma}} \leq& h \|f + g_{b+h}\|_{L^2_{\gamma}} + \| T_{b+h}( \psi_{b+h}, \alpha_{b+h}, \beta_{b+h}) -T_b( \psi_b, \alpha_b, \beta_b)\|_{L^2_{\gamma}}\\
\leq &  h \|f + g_{b+h}\|_{L^2_{\gamma}} + \| T_{b+h}( \psi_{b+h}, \alpha_{b+h}, \beta_{b+h}) -T_b( \psi_{b+h}, \alpha_{b+h}, \beta_{b+h})\|_{L^2_{\gamma}} \\
& + \| T_{b}( \psi_{b+h}, \alpha_{b+h}, \beta_{b+h}) -T_b( \psi_b, \alpha_b, \beta_b)\|_{L^2_{\gamma}}  \\
\leq &  h \|f + g_{b+h}\|_{L^2_{\gamma}} + h \| 2S \partial_x\psi_{b+h} \|_{L^2_{\gamma}} + \|h(f + 2S\partial_x \psi_{b+h})\|_{L^2_{\gamma}}  \\
\leq &  h \|f + g_{b+h}\|_{L^2_{\gamma}} + h \| 2S \partial_x\psi_{b+h} \|_{L^2_{\gamma}} + h C   \| f\|_{L^2_{\gamma}} \\
\leq & h \| f\|_{L^2_{\gamma}},
\end{align*}
where we used that  $\| \partial_x\psi_b\|_{L^2_{\gamma}} \leq \| \psi_b\|_{\mathcal{D}} \leq \|f+g\|_{L^2_{\gamma}}$ from Lemma \ref{lem:InvertibleTb} ( $\gamma >1/2$). This completes the proof.
\end{proof}

We are now ready to show the differentiability of the operator $bT_b^{-1} D:L^2_{\gamma} \rightarrow M^{2,2}_{\gamma-2} \times \bbR^2$.

\begin{lem}\label{lem:bTbDdifferentiable}
Fix $\gamma>3/2$. Then the operator $bT_b^{-1}D:L^2_{\gamma} \rightarrow M^{2,2}_{\gamma-2} \times \bbR^2$ is  differentiable in the parameter $b\geq 0$ with Lipshitz continuous derivative.
\end{lem}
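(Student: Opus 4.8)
The strategy mirrors the proof of Lemma \ref{lem:diffbTb}, but now using the factorization $bT_b^{-1}D=bD^1T_b^{-1}(I+g_\cdot)$ established in the proof of Lemma \ref{lem:bTbDcontY}, which handles the commutator between $D$ and $T_b$. First I would write $R|_b=bT_b^{-1}D$ and propose the candidate derivative by formally differentiating the identity $T_b(\rho,\alpha,\beta)=bDf$ in $b$: differentiating gives $T_b(\partial_b\rho,\partial_b\alpha,\partial_b\beta)=Df-2S\partial_x\rho$, so that
\[
\partial_bR|_bf = T_b^{-1}Df + 2\,T_b^{-1}S\partial_x\,(T_b^{-1}D)^1f,
\]
where as before the superscript $1$ denotes the first ($\rho$-)component. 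The first step is to check this operator is bounded $L^2_\gamma\to M^{2,2}_{\gamma-2}\times\bbR^2$ uniformly in $b\geq 0$ for $\gamma>3/2$: the term $T_b^{-1}Df$ is handled by Corollary \ref{cor:smoothTb} together with boundedness of $D:M^{2,2}_{\gamma-2}\to M^{1,2}_{\gamma-1}$ (Proposition \ref{prop:FredholmCompDerivatives}), and the composition $T_b^{-1}S\partial_x(T_b^{-1}D)^1$ is bounded by chaining $L^2_\gamma\xrightarrow{(T_b^{-1}D)^1}Y\hookrightarrow M^{2,2}_{\gamma-2}$, then $S\partial_x:M^{2,2}_{\gamma-2}\to M^{1,2}_{\gamma-1}$, then $T_b^{-1}:M^{1,2}_{\gamma-1}\subset L^2_{\gamma-1}\to M^{2,2}_{\gamma-3}\times\bbR^2$, noting $M^{2,2}_{\gamma-3}\supset M^{2,2}_{\gamma-2}$ — so one does lose localization, but only down to a space still containing the target; more carefully, since we only need the target $M^{2,2}_{\gamma-2}$, I would use the extra factor $b$ as in Section \ref{subsec:ContinuousbTb} to avoid the loss, exactly as in Lemma \ref{lem:diffbTb}.

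The second step is the differentiability estimate
\[
\bigl\| (R|_{b+h}-R|_b)f - h\,\partial_bR|_bf \bigr\|_{M^{2,2}_{\gamma-2}\times\bbR^2} = \rmO(h^2)\,\|f\|_{L^2_\gamma}.
\]
Here I would use the same telescoping trick as in Lemma \ref{lem:diffbTb}: from $T_{b+h}(\rho,\alpha,\beta)|_{b+h}=(b+h)Df$ and $T_b(\rho,\alpha,\beta)|_b=bDf$ one gets $T_b\bigl((\rho,\alpha,\beta)|_{b+h}-(\rho,\alpha,\beta)|_b\bigr)=hDf-2hS\partial_x\rho|_{b+h}$, so that $(b+h)T_{b+h}^{-1}D-bT_b^{-1}D$ expands into $hT_b^{-1}D-2hT_b^{-1}S\partial_x(T_{b+h}^{-1}D)^1$ applied to $f$, plus the analogous $b\cdot$ term; subtracting $h\partial_bR|_bf$ leaves $-2hT_b^{-1}S\partial_x\bigl[(T_{b+h}^{-1}D)^1-(T_b^{-1}D)^1\bigr]f$ (times the appropriate power of $b$), which by Lemma \ref{lem:bTbDcontY} (Lipschitz continuity of $bT_b^{-1}D$ into $Y\times\bbR^2$) is $\rmO(h^2)\|f\|_{L^2_\gamma}$. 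The bookkeeping of which $b$-factors attach to which term is the only delicate point, and it is resolved exactly as in the passage from Lemma \ref{lem:contbTb} to Lemma \ref{lem:diffbTb}.

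The third and final step is Lipschitz continuity of $b\mapsto\partial_bR|_b$ in operator norm, which I would obtain by the triangle-inequality splitting used at the end of Lemma \ref{lem:diffbTb}: write the difference $\partial_bR|_{b+h}f-\partial_bR|_bf$ as a sum of terms each containing either a factor $(T_{b+h}^{-1}-T_b^{-1})$, a factor $(bT_b^{-1}D-\cdots)$ difference, or a factor $(g_{b+h}-g_b)$, and estimate each using Corollary \ref{cor:smoothTb}, Lemma \ref{lem:contTb}, Lemma \ref{lem:bTbDcontY}, and Lemma \ref{lem:contbTb} respectively, always arranging that any factor $\tfrac1b$ coming from Lemma \ref{lem:FredholmLb}/\ref{lem:InvertibleTb} is absorbed by an explicit $b$. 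I expect the main obstacle to be precisely this loss-of-localization accounting: the commutator correction $g_b$ and the composition $S\partial_x$ each cost a derivative's worth of weight, so one must verify that the prescribed gap $\gamma>3/2$ (equivalently $\sigma+2>3/2$, which holds since $\sigma>2$) together with the auxiliary space $Y=M^{2,2}_{\gamma-2}\cap M^{1,2}_{\gamma-1}$ leaves exactly enough room — the extra factor $b$ on the dangerous terms is what makes this balance work, just as in Section \ref{subsec:ContinuousbTb}.
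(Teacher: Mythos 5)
Your plan follows the paper's proof essentially verbatim: the same candidate derivative obtained by differentiating $T_b(\rho,\alpha,\beta)=bDf$ in $b$, the same telescoping identity reducing the $\rmO(h^2)$ differentiability estimate to the Lipschitz continuity of $bT_b^{-1}D$ into $Y\times\bbR^2$ (Lemma \ref{lem:bTbDcontY}), and the same triangle-inequality splitting for Lipschitz continuity of the derivative. The only slip is that your displayed candidate omits a factor of $b$ --- it should read $\partial_bR|_bf=T_b^{-1}Df+2bT_b^{-1}S\partial_x(T_b^{-1}D)^1f$, since $\rho|_b=b(T_b^{-1}D)^1f$ --- but your own derivation and your explicit remark about the $b$-factor bookkeeping make clear this would be corrected in execution.
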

\begin{proof}
We abbreviate $R=bT_b^{-1}D$ and recall the notation $(T_b^{-1})^1f$ for the  first component of $T_b^{-1}f$. We first define the candidate for the derivative, 
\[
\partial_b R|_b f = 2bT_b^{-1} S \partial_x (T_b^{-1})^1Df + T_b^{-1} Df,
\]
and show that
\[ \left\| (R|_{b+h} - R|_b)f - h \partial_bR|_b f \right \|_{M^{2,2}_{\gamma-2} \times \bbR^2} = \rmO(h^2).\]
A short calculation shows that the difference $$R|_{b+h}f -R|_b f = (\rho, \alpha, \beta)|_{b+h} - (\rho, \alpha, \beta)|_b =(\Delta \rho, \Delta \alpha, \Delta \beta)$$ satisfies the equation $T_b(\Delta \rho, \Delta, \alpha, \Delta \beta) = -2h S \partial_x \rho|_{b+h} + h Df$. Therefore,
\begin{align*}
\left\| (R|_{b+h} - \right.&\left.  R|_b)f- h \partial_bR|_b f \right \|_{M^{2,2}_{\gamma-2} \times \bbR^2} \\
=& \left \| \left[ -2h(b+h) T_b^{-1} S \partial_x (T^{-1}_{b+h})^1Df + h T_b^{-1} Df\right]  - \left[- 2h bT_b^{-1}S \partial_x ( T_b^{-1})^1Df- h T_b^{-1}Df \right] \right \|_{M^{2,2}_{\gamma-2} \times \bbR^2} \\
=& |2h| \left \| T_b^{-1}S \partial_x \left[ (b+h)T_{b+h}^{-1} - b T_b^{-1}\right]^1 Df \right\|_{M^{2,2}_{\gamma-2} \times \bbR^2}\\
\leq & |2h| \left\| T_b^{-1}S \partial_x \right\|_{Y\to M^{2,2}_{\gamma-2}\times \R^2 } \left\|  \left[ (b+h)T_{b+h}^{-1} - b T_b^{-1}\right]^1 Df \right\|_{Y},
\end{align*}
where the last inequality follows from the continuity of the operators
\begin{equation*}
\begin{tikzpicture}
\matrix(m)[matrix of math nodes,
 row sep=3.5em, column sep=9.5em,
  text height=2.5ex, text depth=0.25ex]
   {L^2_{\gamma} & Y& L^2_{\gamma} &M^{2,2}_{\gamma-2} \times \bbR^2. \\};
    \path[->,font=\footnotesize,>=angle 90] 
    	(m-1-1) edge node[auto] {$\left[(b+h)T^{-1}_{b+h} -bT_b^{-1} \right]^1D  $} (m-1-2)
	(m-1-2) edge node[auto] {$ S\partial_x $} (m-1-3)
	(m-1-3) edge node[auto]{$ T_b^{-1} $}(m-1-4);
	\end{tikzpicture}
\end{equation*}
Using the results from Lemma \ref{lem:bTbDcontY}, where we showed that for $\gamma>3/2$ the operator $bT_b^{-1}D:L^2_{\gamma} \rightarrow Y \times \bbR^2$ is continuous with respect to the parameter $b$, we see that
\begin{equation}\label{eq:boundedComp}
\left\| (R|_{b+h} - R|_b)f  - h \partial_bR|_b f \right \|_{M^{2,2}_{\gamma-2} \times \bbR^2} \leq  C|h^2|\,\|f \|_{L^2_{\gamma}}
\end{equation}
as desired.This proves differentiability. It remains to establish continuity of the derivative,
\[
\left \| ( \partial_bR|_{b+h} - \partial_b R|_b) f \right \|_{M^{2,2}_{\gamma-2} \times \bbR^2}\leq C|h|.
\]
We split the difference into
\begin{align*}
\left \| ( \partial_bR|_{b+h} \right.&\left.- \partial_b R|_b) f \right \|_{M^{2,2}_{\gamma-2} \times \bbR^2}\\
=& \left \| (b+h) T_{b+h}^{-1} S \partial_x (T^{-1}_{b+h})^1Df -   b T_{b}^{-1} S \partial_x (T^{-1}_{b})^1Df  \right \|_{M^{2,2}_{\gamma-2} \times \bbR^2}\\
\leq & 2 |b+h| \left \| \left[ T_{b+h}^{-1} -T_b^{-1}\right] \left ( S \partial_x (T^{-1}_{b+h}\right )^1 Df \right\|_{M^{2,2}_{\gamma-2} \times \bbR^2} + 2 \left \|  T_b^{-1} S \partial_x \left[ (b+h)T_{b+h}^{-1} - bT_b^{-1}\right] ^1 Df \right\|_{M^{2,2}_{\gamma-2} \times \bbR^2}\\
\leq &2|b+h| \frac{C\, |h|}{|b+h|} \left \| S \partial_x (T^{-1}_{b+h})^1 Df   \right\|_{L^2_{\gamma} } + 2 \left \| T_b^{-1}S \partial_x \right\|_{Y\to M^{2,2}_{\gamma-2}\times\R^2 } \left \|  \left[ (b+h)T_{b+h}^{-1} - bT_b^{-1}\right] ^1 Df \right\|_Y.
\end{align*}
This last inequality follows from the continuity of the operator $T_b^{-1} :L^2_{\gamma} \rightarrow \mathcal{D}$, see Lemma \ref{lem:InvertibleTb} with $\gamma>1/2$, and the boundedness of the composition \eqref{eq:boundedComp}. Then, using again the results from Lemma \ref{lem:bTbDcontY} with $\gamma>3/2$, we find that
\begin{equation}
\left \| ( \partial_bR|_{b+h} - \partial_b R|_b) f \right \|_{M^{2,2}_{\gamma-2} \times \bbR^2} \leq   2C | h|  \left \| S \partial_x (T^{-1}_{b+h})^1 Df   \right\|_{L^2_{\gamma} } + 2 h \left \| T_b^{-1}S \partial_x \right\|_{Y\to M^{2,2}_{\gamma-2}\times\R^2 }\| f\|_{L^2_{\gamma}},
\end{equation}
which shows Lipshitz continuity of the derivative and concludes the proof.\end{proof}

\section{Appendix}\label{sec:Appendix}
We prove Proposition \ref{prop:FredholmCompDerivatives}. Note first that it is sufficient to consider $\ell=0$, given Proposition \ref{prop:(1-dxx)}. 
The proof is split into several lemmas. We first consider the case $k=1$, $m=0$ and $\gamma>1-1/p$ , and establish Fredholm properties for 
\[
\partial_x: M^{1,p}_{\gamma-1} \rightarrow L^p_{\gamma}
.\]
We then treat the case $\gamma<1-1/p$ in a similar fashion and conclude by showing that $\partial_x$ is not Fredholm for $\gamma=1-1/p$. The results for general $m$ and $k$ follow easily from elemantary calculus and additivity of the index for composition of Fredholm operators. 
In what follows we will denote the subspace spanned by the constants as $\bbP_0$.
\begin{lem}
Let $p \in (1,\infty)$ and $\gamma > 1-1/p$. Then, the operator $$ \partial_x: M^{1,p}_{\gamma-1} \rightarrow L^p_{\gamma},$$ is Fredholm with index $-1$ and cokernel spanned by $\bbP_0$.
\end{lem}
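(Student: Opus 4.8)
The plan is to analyze the one-dimensional operator $\partial_x : M^{1,p}_{\gamma-1} \to L^p_\gamma$ directly via the explicit antiderivative, exploiting that weighted $L^p$ integrability at $\pm\infty$ combined with the derivative gaining a power of localization is exactly what forces closedness of the range. First I would establish that the kernel is trivial for $\gamma > 1-1/p$: if $u \in M^{1,p}_{\gamma-1}$ and $u' = 0$ then $u$ is constant, but a nonzero constant has $\|u \langle x\rangle^{\gamma-1}\|_{L^p} = \infty$ precisely when $(\gamma-1)p \geq -1$, i.e. $\gamma \geq 1-1/p$; hence $\ker \partial_x = \{0\}$. (The subtlety at the endpoint $\gamma = 1-1/p$ is why that case is excluded here.)

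Next I would identify the range. Given $f \in L^p_\gamma$, the natural candidate antiderivative is $u(x) = \int_{-\infty}^x f(y)\,\rmd y$ — but this requires $f$ to be integrable near $-\infty$, which for $f \in L^p_\gamma$ with $\gamma > 1-1/p$ follows from Hölder's inequality since $\langle x\rangle^{-\gamma} \in L^q$ on $(-\infty,0]$ when $\gamma q > 1$, equivalently $\gamma > 1/q = 1-1/p$. So $u$ is well-defined, $u' = f$, and one must check $u \in M^{1,p}_{\gamma-1}$, i.e. $\|u\langle x\rangle^{\gamma-1}\|_{L^p} < \infty$ (the derivative term $\|u' \langle x\rangle^\gamma\|_{L^p} = \|f\|_{L^p_\gamma}$ is automatic). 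This is a weighted Hardy-type inequality: $\big\|\langle x\rangle^{\gamma-1}\int_{-\infty}^x f\big\|_{L^p} \leq C\|f\langle x\rangle^\gamma\|_{L^p}$ holds exactly in the range $\gamma > 1-1/p$. However, $u$ need not decay at $+\infty$: $u(+\infty) = \int_{\bbR} f$ may be nonzero, and then $u \notin M^{1,p}_{\gamma-1}$. The obstruction is precisely this single number. So I would show: $f \in \Rg(\partial_x)$ iff $\int_{\bbR} f = 0$; when $\int f \neq 0$ one can still write $f = c\,\chi' + \tilde f$ where $\chi$ is a fixed smooth function with $\chi \equiv 0$ near $-\infty$, $\chi \equiv 1$ near $+\infty$ (so $\chi' $ is, say, compactly supported), $c = \int f$, and $\tilde f = f - c\chi'$ has $\int \tilde f = 0$ hence lies in the range; this exhibits a complement of dimension exactly $1$, so $\Rg(\partial_x)$ is closed of codimension one.

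Finally I would identify the cokernel. The functional $f \mapsto \int_{\bbR} f = \langle f, 1\rangle$ is a bounded linear functional on $L^p_\gamma$ (again because $\langle x\rangle^{-\gamma} \in L^q(\bbR)$ for $\gamma > 1-1/p \geq 1/q$, i.e. $1 \in L^q_{-\gamma}$), it annihilates the range by the previous step, and it is nonzero; so $\Im^\perp(\partial_x) = \rmspan\{1\} = \bbP_0$ and the index is $0 - 1 = -1$. I would combine the three facts — trivial kernel, closed range of codimension one, cokernel $\bbP_0$ — to conclude. The main obstacle, and the only place real work is needed, is the weighted Hardy inequality establishing $u \in M^{1,p}_{\gamma-1}$; this is most cleanly done by the substitution $x = \pm\rme^\tau$ as in the proof of Lemma \ref{lem:boundedInverseLb}, converting the weighted estimate on $(0,\infty)$ (and symmetrically on $(-\infty,0)$) into an unweighted convolution-type bound on $\bbR$ to which Young's inequality applies, with the condition $\gamma > 1-1/p$ appearing exactly as positivity of the relevant exponential rate.
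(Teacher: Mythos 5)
Your argument is correct and follows essentially the same route as the paper's: trivial kernel because constants fail to lie in $M^{1,p}_{\gamma-1}$, identification of the range with $\{f:\int f=0\}$ via the explicit antiderivative, boundedness of the functional $f\mapsto\int f$ from $1\in L^q_{-\gamma}$, and the key weighted bound on $\partial_x^{-1}$ obtained through the exponential substitution $x=\pm\rme^\tau$ and Young's inequality. The only cosmetic difference is your explicit complement $f=c\chi'+\tilde f$, where the paper simply observes that the range is the kernel of a bounded functional.
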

\begin{proof}
Let $\gamma>1-1/p$ and let $C^{\perp}$ denote the orthocomplement of $\bbP_0$, that is \[
C^{\perp}=\{ f \in L^p_{\gamma}: \int f =0\}
,\]
a closed subspace of $L^p_\gamma$ given our restrictions on $\gamma$.
It is clear that this is a closed subspace of $L^p_{\gamma}$, since $1$ is a bounded linear functional on $L^p_{\gamma}$.

Notice first that  $\Rg(\partial_x) = C^{\perp}$, since any solution to the equation $\partial_xu=f$ solves 
\[
u(x) = \int_{-\infty}^x \partial_xfy) \rmd y,
\]
and $u(x)\to 0$ for $x\to\infty$. Also, the kernel of $\partial_x$ is trivial since constants do not belong to $M^{1,p}_{\gamma-1}$ with our restriction on $\gamma$. It remains to show that the inverse, given through the formula
\[  \begin{array}{c c c c}
  \partial_x^{-1}: & C^{\perp}& \rightarrow &M^{1,p}_{\gamma-1},\\
   & f& \mapsto &\int_{\infty}^x f(y) \rmd y=\int_{-\infty}^x f(y),
   \end{array}
\]
is bounded; note that both integration formulas differ by $\int_\R f=0$ since $f\in C^\perp$. Establishing the required bounds follows a strategy similar to the one used in  Lemma \ref{lem:FredholmLb}. We restrict to $x>0$ without loss of generality and show 
\[\|u\|_{L^p_{\gamma-1}(0,\infty)}< \frac{1}{\gamma -1 + 1/p} \|f\|_{L^p_{\gamma}(0,\infty)}.
\]
This is accomplished after substituting 
\begin{equation}\label{e:expsc}
x  = \rme^{\tau}, \tau\in\R, \quad  \qquad   w(\tau) = \rme^{\bar{\gamma} \tau} u( \rme^{\tau}), \qquad   g(\tau) = \rme^{(\bar{\gamma} +1 ) \tau} f( \rme^{\tau}),
\end{equation}
with $\bar{\gamma}=\gamma-1+1/p$, and estimating the exponential convolution kernel. We omit the straightforward details which are similar to the proof of Lemma \ref{lem:FredholmLb} but easier.
\end{proof}

\begin{lem}
Let $p \in (1,\infty)$ and $\gamma < 1-1/p$. Then, the operator $$ \partial_x: M^{1,p}_{\gamma-1} \rightarrow L^p_{\gamma},$$ is Fredholm with index $1$ and kernel spanned by $\bbP_0$.
\end{lem}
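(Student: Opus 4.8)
The plan is to reprise the proof of the previous lemma with the roles of kernel and cokernel interchanged. First I would observe that a constant $c$ has $M^{1,p}_{\gamma-1}$-norm equal to $\|c\langle x\rangle^{\gamma-1}\|_{L^p}$, which is finite precisely when $p(\gamma-1)<-1$, i.e. when $\gamma<1-1/p$; since any distribution on the connected line annihilated by $\partial_x$ is constant, this shows $\ker\partial_x=\bbP_0$, a one-dimensional space. (For $\gamma>1-1/p$ these constants dropped out of the space, which is why the kernel was trivial there.)

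Next I would establish surjectivity. In contrast to the case $\gamma>1-1/p$, where $\int_\R f$ converges for $f\in L^p_\gamma$ and the antiderivative is anchored at $\pm\infty$, here $\int_\R f$ need not converge, so instead, given $f\in L^p_\gamma$, I set $u(x)=\int_0^x f(y)\,\rmd y$; this is locally absolutely continuous with $u_x=f\in L^p_\gamma$, and on $|x|\le 1$ it is bounded with $\|u\langle x\rangle^{\gamma-1}\|_{L^p(-1,1)}\le C\|f\|_{L^p_\gamma}$ by H\"older on a compact interval. The only substantial point is the bound on $|x|\ge 1$, a Hardy-type inequality: for $x\ge 1$ I would use the substitution $x=\rme^\tau$, $w(\tau)=\rme^{\bar\gamma\tau}u(\rme^\tau)$, $g(\tau)=\rme^{(\bar\gamma+1)\tau}f(\rme^\tau)$ with $\bar\gamma=\gamma-1+1/p$, which now satisfies $\bar\gamma<0$. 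This turns $u=\int_0^x f$ into the convolution $w(\tau)=\int_{-\infty}^{\tau}g(s)\,\rme^{\bar\gamma(\tau-s)}\,\rmd s$, whose kernel $K(t)=\rme^{\bar\gamma t}$ for $t\ge 0$ and $K(t)=0$ for $t<0$ lies in $L^1(\R)$ exactly because $\bar\gamma<0$, so Young's inequality gives $\|w\|_{L^p}\le|\bar\gamma|^{-1}\|g\|_{L^p}$, i.e. the desired estimate on $(1,\infty)$; the case $x\le-1$ is analogous via $x=-\rme^\tau$. Hence $u\in M^{1,p}_{\gamma-1}$ with $\partial_x u=f$, so $\Rg(\partial_x)=L^p_\gamma$.

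Since $\Rg(\partial_x)=L^p_\gamma$ is closed with trivial cokernel, $\partial_x$ is Fredholm of index $\dim\ker\partial_x-\mathrm{codim}\,\Rg(\partial_x)=1-0=1$, with kernel $\bbP_0$, as claimed. I expect the only real obstacle to be this Hardy estimate together with the correct choice of base point: anchoring the antiderivative at the origin rather than at infinity is precisely what frees the integration constant and produces the one-dimensional kernel, and integrability of the convolution kernel hinges on the sign of $\bar\gamma$, which flips exactly at the threshold $\gamma=1-1/p$. As with the companion lemma, I would write out the exponential-substitution estimate only in sketch form, since it is routine once set up.
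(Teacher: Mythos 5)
Your proof is correct and follows essentially the same route as the paper: the paper likewise identifies the kernel with the constants (which lie in $M^{1,p}_{\gamma-1}$ precisely for $\gamma<1-1/p$), takes $\partial_x^{-1}f=\int_0^x f$ as the right inverse anchored at the origin, and bounds it via the exponential substitution \eqref{e:expsc} turning the antiderivative into a convolution with an exponentially localized kernel. You merely spell out the Young's-inequality step and the compact-interval case that the paper leaves as "straightforward."
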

\begin{proof}
One readily verifies the claim on the kernel so that it is sufficient to verify that the operator is onto. Therefore, we define 
\[  \begin{array}{c c c c}
  \partial_x^{-1}: & L^p_\gamma& \rightarrow &M^{1,p}_{\gamma-1},\\
   & f& \mapsto &\int_{0}^x f(y) \rmd y.
   \end{array}
\]
Clearly, $\partial_x^{-1}$ is a right inverse. Using the coordinate transformations \eqref{e:expsc}, one obtains once again a convolution operator with exponentially localized kernel and finds the desired estimates in a straightforward fashion.
\end{proof}
Finally, we show that for $\gamma=1-1/p$, $\partial_x$ does not have closed range. 
\begin{lem}
Let $p \in (1,\infty)$ and $\gamma=1-1/p$. Then
\begin{equation*}
\partial_x : M^{1,p}_{\gamma-1} \rightarrow L^p_{\gamma}
\end{equation*}
does not have closed range.
\end{lem}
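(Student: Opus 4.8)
The plan is to show that $\partial_x\colon M^{1,p}_{\gamma-1}\to L^p_\gamma$ is bounded and injective but not bounded below; since a bounded injective operator with closed range is an isomorphism onto its (closed) range and hence bounded below, this suffices. Injectivity is immediate: the kernel could only contain constants, but at the critical weight $\gamma-1=-1/p$ one has $\langle x\rangle^{\gamma-1}\notin L^p(\R)$, so no nonzero constant lies in $M^{1,p}_{\gamma-1}$. It therefore remains to produce a Weyl sequence, i.e.\ functions $u_n\in M^{1,p}_{\gamma-1}$ with $\|u_n\|_{M^{1,p}_{\gamma-1}}=1$ and $\|\partial_x u_n\|_{L^p_\gamma}\to 0$, which contradicts the a priori bound that a closed range would force via the open mapping theorem.

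The key observation is that $\gamma=1-1/p$ is precisely the weight at which the logarithmic substitution \eqref{e:expsc} becomes weight-free: with $\bar\gamma=\gamma-1+1/p=0$, the change of variables $x=\rme^\tau$, $w(\tau)=u(\rme^\tau)$, $g(\tau)=\rme^\tau\,(\partial_x u)(\rme^\tau)$ turns $\partial_x u=f$ on $x>0$ into $w'=g$ on $\tau\in\R$, and, using that $\rme^\tau/\langle\rme^\tau\rangle$ is bounded above and below for $\tau\geq 0$, turns $\|u\|_{L^p_{\gamma-1}(0,\infty)}$ and $\|\partial_x u\|_{L^p_\gamma(0,\infty)}$ into $\|w\|_{L^p(0,\infty)}$ and $\|w'\|_{L^p(0,\infty)}$, up to constants independent of $u$. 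Thus, restricted to the half-line, $\partial_x$ is conjugate to the plain derivative $\partial_\tau\colon W^{1,p}(\R)\to L^p(\R)$, which is the classical example of a closed operator without closed range.

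Concretely I would fix a smooth $\chi$ with $\chi\equiv 0$ on $(-\infty,0]$ and $\chi\equiv 1$ on $[1,\infty)$, set $w_n(\tau)=\chi(\tau)\,\chi(n-\tau)$ (equal to $1$ on $[1,n-1]$, supported in $[0,n]$), and pull back: $u_n(x)=w_n(\ln x)$ for $x>0$ and $u_n(x)=0$ for $x\leq 0$. Each $u_n$ is smooth with compact support in $[1,\rme^n]$, hence lies in $M^{1,p}_{\gamma-1}$. Since $\|w_n\|_{L^p(\R)}^p\geq n-2$ while $\|w_n'\|_{L^p(\R)}\leq 2\|\chi'\|_{L^p(\R)}$ is bounded in $n$, the norm equivalences above give $\|u_n\|_{M^{1,p}_{\gamma-1}}\to\infty$ with $\|\partial_x u_n\|_{L^p_\gamma}$ bounded, so the normalized functions $u_n/\|u_n\|_{M^{1,p}_{\gamma-1}}$ are the desired Weyl sequence. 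The routine ingredients — the two-sided bound $\rme^\tau\asymp\langle\rme^\tau\rangle$ for $\tau\geq 0$ and checking $u_n\in M^{1,p}_{\gamma-1}$ — are elementary; the only step worth genuine care is verifying that the substitution truly removes all weights at this precise exponent, since it is exactly the vanishing of $\bar\gamma$ that separates this borderline case from the Fredholm ranges handled in the previous two lemmas, so the argument is not a perturbation of those but relies on the borderline algebra. I expect no serious obstacle beyond this bookkeeping.
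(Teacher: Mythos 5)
Your argument is correct, and it takes a genuinely different route from the paper. The paper argues abstractly: it observes that kernel and cokernel are trivial at the critical weight, so a closed range would make $\partial_x$ Fredholm of index $0$, contradicting local constancy of the Fredholm index under the (relatively compact) perturbations induced by small changes of the weight $\gamma$, since the two adjacent lemmas give indices $-1$ and $+1$ on either side. You instead give a direct Weyl sequence: after checking injectivity (no nonzero constant lies in $M^{1,p}_{-1/p}$), you exploit that $\bar\gamma=\gamma-1+1/p=0$ makes the logarithmic substitution weight-free on $x\geq 1$, pull back plateau functions $w_n$, and obtain $\|u_n\|_{L^p_{\gamma-1}}\gtrsim (n-2)^{1/p}\to\infty$ while $\|\partial_x u_n\|_{L^p_\gamma}$ stays bounded; normalizing shows $\partial_x$ is not bounded below, which for an injective bounded operator rules out closed range. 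Your version is more elementary and self-contained: it needs neither the density of the range (triviality of the cokernel) nor the relative-compactness of weight changes, both of which the paper's one-line proof quietly relies on, and it is in the spirit of the paper's own earlier remark that $\partial_x:W^{1,p}\to L^p$ fails to have closed range by an explicit Weyl sequence. What the paper's route buys is brevity and a conceptual explanation (the index must jump at $\gamma=1-1/p$, so the range cannot be closed there). One cosmetic point: the two-sided bound $\rme^\tau\asymp\langle\rme^\tau\rangle$ holds only for $\tau\geq 0$, so the asserted conjugacy to the unweighted derivative is valid on $x\geq 1$ rather than on all of $(0,\infty)$; since your $u_n$ are supported in $[1,\rme^n]$ this does not affect the proof, but the statement of the ``norm equivalence'' should be restricted accordingly.
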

\begin{proof}
One readily finds that kernel and cokernel are trivial, yet the operator cannot be Fredholm since operators for nearby values of $\gamma$ can be viewed as compact perturbations, for which the Fredholm index jumps. As a consequence, the range cannot be closed.
\end{proof}

 \bibliographystyle{siam}	
\bibliography{nonloc-pacemaker-3}	
\end{document}